\documentclass[11pt, dvipsnames]{article}
\usepackage[utf8]{inputenc}
\usepackage{amsmath}
\usepackage[english]{babel}
\usepackage{amsmath}
\usepackage{amssymb}
\usepackage{amsthm}
\usepackage{mathtools}
\usepackage{verbatim}
\usepackage{enumerate}
\usepackage[linesnumbered,boxruled,lined]{algorithm2e}
\usepackage[dvipsnames]{xcolor}
\usepackage{cite}
\usepackage{subfig}
\usepackage{pgfplots}
\usepackage{float}
\usepackage{epstopdf}
\usepackage{etoolbox}
\usepackage{tabularx}
\usepackage{tikz}
\usepgfplotslibrary{fillbetween}
\captionsetup[subfigure]{labelformat=empty}

\newcommand{\norm}[1]{\left\Vert #1\right\Vert}
\newcommand{\real}{\mathbb{R}}

\newcommand{\set}[1]{\left\{ #1\right\}}

\newcommand{\trans}[1]{#1^{\mathsf{T}}}
\newcommand{\T}{\mathsf{T}}
\newcommand{\X}{\mathcal{X}}
\newcommand{\Tset}{\mathcal{T}}

\DeclareMathOperator{\Int}{int}
\DeclareMathOperator{\cl}{cl}
\DeclareMathOperator{\bd}{bd}
\DeclareMathOperator{\conv}{conv}
\DeclareMathOperator{\cone}{cone}

\DeclareMathOperator{\vertice}{vert}

\theoremstyle{plain}
\newtheorem{theorem}{Theorem}[section]
\newtheorem{definition}[theorem]{Definition}
\newtheorem{rem1}[theorem]{Remark}
\newtheorem{proposition}[theorem]{Proposition}
\newtheorem{lemma}[theorem]{Lemma}
\newtheorem{assumption}[theorem]{Assumption}

\theoremstyle{definition}
\newtheorem{example}[theorem]{Example}
\newenvironment{remark}{\begin{rem1}\rm}{\end{rem1}}

\linespread{1.213}

\usepackage{geometry}
\geometry{
	a4paper,
	total={170mm,250mm},
	left=20.5mm,
	top=20mm,
}

\theoremstyle{definition}
%name %num
{\rule{\textwidth}{1pt}\begin{alg}[#1]~\\\rule{\textwidth}{1pt}\\}		%before
	{\end{alg}\rule{\textwidth}{1pt}}				%after
\newtheorem{alg}[theorem]{Algorithm}%[section]

\newcommand{\rev}[1]{\textcolor{black}{#1}}

\newcommand{\g}[1]{\begin{color}{gray}{#1}\end{color}}

%\newrobustcmd*{\csquare}[1]{\tikz{\filldraw[draw=#1,fill=#1] (0,0)		rectangle (0.2cm,0.2cm);}}
\newrobustcmd*{\csquare}[1]{\tikz{\filldraw[draw=#1,fill=#1] (0,0)		rectangle (0.1cm,0.1cm);}}

%\newrobustcmd*{\ctriangle}[1]{\tikz{\filldraw[draw=#1,fill=#1] (0,0) --		(0.2cm,0) -- (0.1cm,0.2cm);}}
\newrobustcmd*{\ctriangle}[1]{\tikz{\filldraw[draw=#1,fill=#1] (0,0) --		(0.15cm,0) -- (0.075cm,0.15cm);}}

%\title{Unbounded convex benson}
\begin{document}
	\title{Algorithms to solve unbounded convex vector optimization problems}
	\author{Andrea Wagner\thanks{Vienna University of Economics and Business, Institute for Statistics and Mathematics, Vienna A-1020, AUT,  andrea.wagner@wu.ac.at, birgit.rudloff@wu.ac.at, gabriela.kovacova@wu.ac.at, nihey@wu.ac.at.} \and Firdevs Ulus\thanks{Bilkent University, Department of Industrial Engineering, Ankara, 06800 Turkey, firdevs@bilkent.edu.tr.} \and Birgit Rudloff\footnotemark[1] \and Gabriela Kov\'{a}\v{c}ov\'{a}\footnotemark[1]  \and Niklas Hey\footnotemark[1]
	}
	\maketitle
	\abstract{
		This paper is concerned with solution algorithms for general convex vector optimization problems (CVOPs). So far, solution concepts and approximation algorithms for solving CVOPs exist only for bounded problems~\cite{LRU14,DorLohSchWei2021,AraUlusUmer2021}. They provide a polyhedral inner and outer approximation of the upper image %of the problem 
		that have a Hausdorff distance of at most $\varepsilon$. However, it is well known (see~\cite{Ulus18}), that for some unbounded problems such polyhedral approximations do not exist. In this paper, we will propose a generalized solution concept, called an $(\varepsilon,\delta)$--solution, that allows also to consider unbounded CVOPs. It is based on additionally bounding the recession cones of the inner and outer polyhedral approximations of the upper image in a meaningful way. An algorithm is proposed that computes such 
		$\delta$--outer and $\delta$--inner approximations of the recession cone of the upper image. In combination with the results of~\cite{LRU14} this provides a primal and a dual algorithm that allow to compute $(\varepsilon,\delta)$--solutions of (potentially unbounded) CVOPs. Numerical examples are provided.
		\\[.2cm]
		{\bf Keywords and phrases:} convex vector optimization, unbounded problems, approximation algorithm, approximation of cones
		\\[.2cm]
		{\bf Mathematics Subject Classification (2020):} 90B50, 90C25, 90C29 %52A20, 90C29, 90C25
	}

\section{Introduction}
Minimizing a vector valued objective function with respect to a partial order relation
induced by a cone over a feasible region is referred to as vector optimization. The subject of this paper is the special case of convex problems, in which the objective function is convex with respect to the given order relation and the feasible region is convex. There are many application areas in which solving convex vector optimization problems (CVOPs) plays a key role. Some of the recent application areas can be listed as financial mathematics \cite{FeiRud2017}, economics \cite{RudUlu2021} and dynamic programming \cite{RudKov2021}. Moreover, recently in \cite{DeSanEicNieRoc2020}, convex vector optimization algorithms are used in order to solve convex relaxations of nonconvex problems. Convex projections~\cite{KovacovaRudloff2021} can also be solved via multi-objective convex problems. 

Solving a CVOP has different meanings depending on the solution concept that one considers. The main idea is to find finitely many efficient solutions which generate a good representative of the whole set of efficient values in the objective space. Motivated from the set optimization point of view, an exact solution concept for linear vector optimization problems is introduced by L\"ohne \cite{Loe11}. Accordingly, a solution is a set of efficient solutions whose images generate the so called \emph{upper image}, which can be described simply as the image of the feasible set added to a predefined ordering cone. Since the upper image is a polyhedral set in the linear case, it is possible to generate the finitely many extreme points and extreme directions\rev{, see for instance \cite{Ben98,EhrLoeSha12,HLR13,LoeWei16,RudUlVan17} for algorithms that generate such solutions}. This clearly may not be possible for CVOPs\rev{, hence one often considers approximate solutions for CVOPs. Indeed, algorithms to compute polyhedral approximations to the upper image of convex multiobjective optimization problems are proposed in \cite{KlaTiWie2003,EhrShaSch11}. Later,} an $\varepsilon$-solution concept \rev{for CVOPs} is defined in \cite{LRU14}. Accordingly, a finite (weak) $\varepsilon$-solution consists of finitely many (weakly) efficient solutions whose images generate a polyhedral inner approximation, say $P$, to the upper image such that shifting $P$ through a fixed direction at the size of $\varepsilon$ yields an outer approximation to the upper image. \rev{In \cite{KesUlus2022}, a framework for the objective space-based CVOP algorithms is provided and different variants of the primal algorithm from \cite{LRU14} that guarantee finding (weak) $\varepsilon$-solutions are proposed and compared.} Recently, in \cite{DorLohSchWei2021} and \cite{AraUlusUmer2021}, a similar solution concept which does not depend on a fixed direction is defined. Similar to the one from \cite{LRU14}, these solution concepts guarantee that the Hausdorff distance between the upper image and the corresponding polyhedral approximation is bounded by~$\varepsilon$.   %\gk{I changed $\epsilon$ to $\varepsilon$ to be consistent with later notation.}

The algorithms from \cite{LRU14,DorLohSchWei2021,AraUlusUmer2021} that can be used to compute such \rev{approximate} solutions assume that the feasible region of the CVOP is compact. Under this assumption, it is known that the problem is \emph{bounded} (the upper image is a subset of a point plus the ordering cone) and the existence of an $\varepsilon$-solution is guaranteed, see for instance \cite[Proposition 4.2]{LRU14}. %, \cite[Proposition 3.8]{AraUlusUmer2021}. 
However, for many applications, the compactness of the feasible region is a restrictive assumption, see for instance the introduction of \cite{KovacovaRudloff2021}.

Note that if the problem does not have a compact feasible region, then the problem may be \emph{unbounded}. In \cite{Ulus18}, it is shown that if the problem is unbounded, there may not exist a polyhedral approximation to the upper image such that the Hausdorff distance between the two is finite. This means that the solution concepts from \cite{LRU14,DorLohSchWei2021,AraUlusUmer2021} may not be applicable for such problems. Indeed, it has been shown in \cite{Ulus18} that finding a polyhedral outer approximation with bounded Hausdorff distance may be possible only if the problem is \emph{self-bounded} (the upper image is a subset of a point plus its recession cone). There is also a solution concept for such problems in~\cite{Ulus18}. However, it is more a theoretical concept rather than a practical one because it requires computing the recession cone of the upper image, which even may not be a polyhedral set in general. 

Motivated by this, we focus in this paper on general convex vector optimization problems without an assumption of boundedness. We propose a\rev{n approximate} solution concept -- an $(\varepsilon,\delta)$--solution -- which is applicable for bounded as well as for unbounded CVOPs. This new solution concept consists of two parts, a set of weak minimizers to approximate the central part of the Pareto frontier and a set of directions to approximate the recession cone of the upper image. Jointly, these two components provide a (polyhedral) approximation of the upper image. 

We further provide a method for finding $(\varepsilon,\delta)$--solutions of CVOPs. \rev{Similar to the approach for solving linear vector optimization problems, see for instance \cite{bensolve,Loe14}, our} method consists of two phases: In the first phase, which is represented by Algorithm~\ref{alg_new}, a $\delta$--approximation of the recession cone of the upper image is found. In the second phase, we use the newly found (outer) approximation of the recession cone to transform the CVOP into a bounded problem and proceed with searching for weak minimizers to approximate the central part of the frontier. For this purpose, available algorithms for bounded CVOPs can be adapted. We provide both, a primal algorithm (see Algorithm~\ref{alg_2}) and a dual algorithm (see Algorithm~\ref{alg_dual}), based on combining Algorithm~\ref{alg_new} with the primal and dual algorithms of~\cite{LRU14}. We illustrate our algorithms with numerical examples.

The paper is organized as follows: Section 2 provides basic definitions and notations. In Section~\ref{sec_problem_formulation}, the definition and properties of convex vector optimization problems as well as solution concepts for bounded CVOPs are recalled. Then, a solution concept for unbounded problems is proposed and a transformation of an unbounded CVOP into a bounded one is given that is using an outer approximation of the recession cone of the upper image as the ordering cone. Section~\ref{sec_scalar} recalls scalarizations of CVOPs that will be used within subsequent algorithms and provides related results. Section~\ref{sec_out_appr} contains the main contribution of this paper -- it provides a method to approximate the recession cone of an upper image of a CVOP. Section~\ref{sec_algs} extends this into algorithms computing $(\varepsilon,\delta)$--solutions of CVOPs, it contains both a primal and a dual variant of the method. \rev{We provide examples in Section~\ref{sec_ex} and conclude the paper in Section~\ref{sec:conc}.}

\section{Preliminaries}
Standard basis vectors of $\real^q$ are denoted by $e^1, \dots, e^q$.
For a set $A \subseteq \mathbb R^q$ we denote the interior, closure, boundary, convex hull and convex conic hull of $A$ by $\Int A$, $\cl A$, $\bd A$, $\conv A$ and $\cone A$, respectively. The \textit{recession cone} $A_{\infty}$ of a set $A \subseteq \mathbb R^q$ is defined as $A_{\infty}:=\{d \in \mathbb R^q \mid \forall y \in A, \forall \alpha \geq 0: y+\alpha d \in A\}$. A {\em{polyhedral convex}} set $A \subseteq \mathbb R^q$ can be expressed as 
\begin{align}{\label{H-rep}}
	A = \bigcap_{i=1}^r \left\lbrace y \in \mathbb R^q \mid \trans{(w^i)}y \geq \gamma_i\right\rbrace
\end{align}
for some $r \in \mathbb{N}, w^1,...,w^r \in \mathbb R^q \setminus \{0\}$ and $\gamma_1,...,\gamma_r \in \mathbb{R}$. Representation (\ref{H-rep}) is called {\em{H-representation}} of a convex polyhedron. A convex polyhedron $A$ can also be represented as
\begin{align}{\label{V-rep}}
	A = \conv \left\lbrace x^1,...,x^s \right\rbrace +\cone \left\lbrace k^1,...,k^t \right\rbrace,
\end{align}
where $s \in \mathbb N \setminus \{0\}$, $t \in \mathbb N$, $x^1,...,x^s \in \mathbb R^q$ are points (vertices) in $A$ and $k^1,...,k^t \subseteq \mathbb R^q \setminus \{0\}$ are directions of $A$. The set of points $\left\lbrace x^1,...,x^s \right\rbrace$ together with the set of directions $\left\lbrace k^1,...,k^t \right\rbrace$ are called {\em{generators}} of the convex polyhedron $A$. Representation (\ref{V-rep}) is called {\em{V-representation}} or {\em{generator representation}} of a convex polyhedron. Note that we use the convention $\cone \emptyset=\{0\}$. If the set of directions in the generator representation (\ref{V-rep}) is empty, we call $A$ a \textit{polytope}. We denote by $\vertice A$ the set of vertices of a polytope $A$. 

Recall that a set $C \subseteq \mathbb R^q$ is a {\em{cone}} if $\alpha C \subseteq C$ holds for all \rev{$\alpha\geq0$}.
A nonzero element $c \in C$ is an extreme direction of $C$ if it cannot be expressed as a convex combination of elements of $C \setminus \cone \{c\}$.
The {\em{positive dual cone}} of a cone $C \subseteq \mathbb R^q$ is the set $C^{+}:=\{z \in \mathbb R^q \mid \trans{z}c \geq 0, \forall c \in C\}$. A cone $C$ is {\em{convex}} if $C + C \subseteq C$ holds. A convex cone $C \subseteq \mathbb R^q$ is said to be {\em{pointed}} if it contains no lines through the origin. If $\Int C \neq \emptyset$, then the convex cone $C \subseteq \mathbb R^q$ is called {\em{solid}}. A cone $C \subseteq \mathbb R^q$ is called \textit{non-trivial} if $\{ 0 \} \subsetneq C \subsetneq \mathbb{R}^q$. A non-trivial pointed convex cone $C \subseteq \mathbb R^q$ defines a partial ordering $\leq_C$ in the space $\mathbb R^q$: $v \leq_C w$ if and only if $w-v \in C$ for $v,w \in \mathbb R^q$. If $C$ is additionally solid, we denote by $v <_C w$ that $w-v \in \Int C$ for $v,w \in \mathbb R^q$. 

Let $C \subseteq \mathbb R^q$ be a non-trivial pointed convex cone and let $\mathcal{X} \subseteq \mathbb R^n$ be a convex set. A vector-valued function $\Gamma: \mathcal{X} \to \mathbb R^q$ is said to be {\em{C-convex}} if for all $x,y \in \mathcal{X}$ and for all $\alpha \in [0,1]$ it holds $\Gamma(\alpha x+(1-\alpha)y) \leq_C \alpha \Gamma(x)+(1-\alpha)\Gamma(y)$. Moreover, for a nonempty set $A \subseteq \mathbb R^q$, $y \in A$ is called a {\em{C-minimal element}} of $A$ if $\left(\{y\}-C\setminus \{0\} \right) \cap A=\emptyset$. If the cone $C$ is solid, then a point $y \in A$ is called a {\em{weakly C-minimal element}} of $A$ if $\left( \{y\}-\Int C \right)\cap A=\emptyset$. %\out{The set of all (weakly) $C$-minimal points of $A$ is denoted by $(w)\Min_C(A)$. The set of all (weakly) $C$-maximal elements of $A$ is defined by $(w)\Max_C(A):=(w)\Min_{-C}(A)$.}

Throughout, we use the $\ell_1$ (Manhattan) norm $\norm{ \cdot } = \norm{ \cdot }_1$. A closed unit ball around the origin is denoted by $B_1 (0) = \{ x \in \mathbb{R}^q \mid \norm{x} \leq 1 \}$. The {\em{Hausdorff-distance}} (under the $\ell_1$ norm) between two nonempty sets $A,B \subseteq \mathbb R^q$ is defined as 
\begin{align*}
	\text{d}_H(A,B):=\max \left\{ \sup_{a \in A} \inf_{b \in B} \norm{a-b},\sup_{b \in B} \inf_{a \in A} \norm{a-b} \right\}.
\end{align*}
The $\ell_2$ (Euclidean) norm, when needed, is denoted by $\norm{ \cdot }_2$. Recall that the two norms are equivalent, in particular it holds $\norm{ x }_2 \leq \norm{ x }_1 \leq \sqrt{q} \norm{ x }_2$ for all $x \in \mathbb{R}^q$.

\section{Problem formulation and solution concept}
\label{sec_problem_formulation}
A convex vector optimization problem (CVOP) is an optimization problem of the form
\begin{align*}
	\tag{P} \label{eq:P}  \text{minimize } \Gamma(x) \quad \text{ with respect to\ } \leq_C\quad\text{ subject to } x\in\mathcal{X},
\end{align*}
where $C \subseteq \mathbb R^q$ is a non-trivial pointed \rev{solid} convex ordering cone, $\mathcal{X} \subseteq \mathbb R^n$ is a convex set and $\Gamma: \mathcal{X} \to \mathbb R^q$ is a $C$-convex function. The set $\mathcal{X}$ is assumed to be non-empty, i.e. \eqref{eq:P} is feasible. The {\em{image of the feasible set}} $\mathcal{X}$ is the set $\Gamma(\mathcal{X}):=\{\Gamma(x) \mid x \in \mathcal{X}\}$ and the {\em{upper image}} of \eqref{eq:P} is defined by 
%\begin{align*}
$\mathcal{P}:=\cl \left( \Gamma(\mathcal{X})+C \right).$
%\end{align*}

Throughout the rest of the paper we make the following assumptions.
\begin{assumption} \label{assumption1}
	Let the following hold true:
	\begin{enumerate}[(a)]
		\item The feasible set $\mathcal{X} \subseteq \mathbb R^n$ is closed, convex and $\Int \mathcal{X} \neq \emptyset$.
		\item The ordering cone $C \subseteq \mathbb R^q$ is closed in addition to being non-trivial, pointed, solid and convex.
		\item The objective function $\Gamma: \; \mathcal{X} \to \real^q$ is continuous and $C$-convex.
		\rev{\item A direction $c \in \Int C$ with $\norm{c} = 1$ is fixed throughout the paper. }
	\end{enumerate}
	\rev{
		For computational purposes (from Section~\ref{sec_out_appr} onward) we further assume:
		\begin{enumerate}[(e)]
			\item The ordering cone $C \subseteq \mathbb R^q$ is polyhedral.
		\end{enumerate}		
	}
\end{assumption}

\rev{Note that Assumptions~\ref{assumption1} (a)-(c) are standard for numerical methods/algorithms for CVOPs. Assumption~\ref{assumption1} (d) is needed for the solution concept to be discussed in Section~\ref{subsec:solncon} as well as for the algorithms that will be presented in Section~\ref{sec_algs}. The results of this paper hold true with any choice of $c$ satisfying Assumption~\ref{assumption1} (d). However, the choice of $c$ may affect the computational performance of the algorithms. Indeed, this has been shown recently in \cite{KesUlus2022} for \cite[Algorithm 1]{LRU14}. %For the computational results in \Cref{sec_ex}, also using \Cref{assumption1} (e), we fix $c = $ 
}

\begin{definition}[see~\cite{Ulus18}]
	Let $\hat{C} \supseteq C$ be a closed convex cone. Problem \eqref{eq:P} is called \textbf{bounded with respect to~$\mathbf{\hat{C}}$} if there is a point $\hat{p} \in \mathbb R^q$ such that $\mathcal{P} \subseteq \set{\hat{p}}+\hat{C}$. Problem \eqref{eq:P} is called \textbf{bounded} if it is bounded with respect \rev{to} the ordering cone $C$.  Problem \eqref{eq:P} is called \textbf{unbounded} if it is not bounded. If \eqref{eq:P} is bounded with respect to~$\mathcal{P_{\infty}}$, then it is called \textbf{self-bounded}.
\end{definition}

Since $C \subseteq \mathcal{P_{\infty}}$ always holds true, a bounded problem is also self-bounded. On the other hand, a self-bounded problem does not necessarily need to be bounded. We refer the reader to~\cite{Ulus18} for more details and examples.

\rev{
	Note that a vector optimization problem with a polyhedral ordering cone can be transformed into a multi-objective problem (with the standard ordering cone), we refer reader to~\cite{SawNakTan85} for more information. However, if the original vector optimization problem is unbounded, then also the resulting multi-objective problem is unbounded. Computationally, such a transformation is of advantage if it reduces the dimension of the objective space.
}

\subsection{Solution concept}\label{subsec:solncon}
We will now establish an appropriate solution concept for problem~\eqref{eq:P}.  To do so, let us introduce the basic \rev{notations} and recall existing solution concepts for bounded and self-bounded problems from the literature.
A point $\bar{x} \in \mathcal{X}$ is called a {\em{(weak) minimizer}} for \eqref{eq:P} if $\Gamma(\bar{x})$ is a (weakly) $C$-minimal element of $\Gamma(\mathcal{X})$. For expository purposes we will occasionally refer to \rev{the} set of all images of (weak) minimizers for \eqref{eq:P} as the Pareto frontier, where we will not explicitly differentiate between weak minimizers and minimizers as the purely expository statements involving the Pareto frontier will be true for both.

A solution concept for a bounded problem \eqref{eq:P} is given in \cite{LRU14}. This solution concept\rev{, which uses the fixed direction $c \in \Int C$,} is based on the complete lattice approach in set optimization (see~\cite{HeyLoe11}) and it has to fulfill two conditions: infimum attainment and minimality. Since it is not possible in general to represent the upper image $\mathcal{P}$ of a convex vector optimization problem by finitely many points, a polyhedral approximation is sought.
Given a tolerance $\varepsilon>0$, a \textit{finite (weak) $\varepsilon$--solution} $\bar{\X} \subseteq \mathcal{X}$ of a bounded problem \eqref{eq:P} consists of finitely many (weak) minimizers  
and satisfies
\begin{align}
	\label{sol-concept_bd}
	\mathcal{P} \subseteq \conv \Gamma(\bar{\X}) + C - \varepsilon\{c\}.
	%\mathcal{P} \subseteq \conv \mathcal{Y}^{poi} + C - \delta\{c\}.
\end{align}
Thus, one obtains 
a polyhedral inner and outer approximation \rev{of the upper image} in $\varepsilon$--distance to each other that contain the Pareto frontier in between, that is
\begin{align}
	\label{sol-concept_bd_io}
	\conv \Gamma(\bar{\X}) + C\subseteq \mathcal{P} \subseteq \conv \Gamma(\bar{\X}) + C - \varepsilon\{c\}.
\end{align}
This solution concept has been extended to self-bounded problems in~\cite[Definition~5.1]{Ulus18}, where the cone $C$ in~\eqref{sol-concept_bd} (and thus also in~\eqref{sol-concept_bd_io}) is replaced by the recession cone $\mathcal{P_{\infty}}$ of the upper image $\mathcal{P}$. Unless $\mathcal{P_{\infty}}$ is polyhedral and known, this is more of a theoretical concept, as until now no method was known on how to compute or approximately calculate $\mathcal{P_{\infty}}$. This problem will be resolved in the present paper. The situation for unbounded problems (that are not self-bounded) is even worse, as it is known that it is not possible  in this case to generate polyhedral inner and outer approximations \rev{of the upper image} in $\varepsilon$--distance that contain the Pareto frontier, see~\cite{Ulus18}, and thus no solution concept existed so far for this type of problem.

In the following, we will propose a solution concept for general convex vector optimization problems~\eqref{eq:P}. We will see that \rev{introducing} an  $\delta$--approximation of $\mathcal{P}_{\infty}$ will resolve the issues for both the self-bounded as well as the unbounded case. In Section~\ref{sec_out_appr} we will present an algorithm that computes such a $\delta$--approximation. \rev{For a problem~\eqref{eq:P} that is bounded, the algorithm simply returns the ordering cone, whereas for an unbounded problem,} this $\delta$--approximation of $\mathcal{P}_{\infty}$ will include directions that are not contained within the ordering cone $C$.

\begin{definition}{\label{def_approxi_reccP}}
	A (finite) set $\mathcal{Y} \subseteq \mathbb R^q$ is called a \textbf{(finite) $\delta$--outer approximation} of $\mathcal{P}_{\infty}$ if
	\begin{itemize}
		\item the cone generated by $\mathcal{Y}$ is an outer approximation \rev{of $\mathcal{P}_{\infty}$}, i.e. $\mathcal{P}_{\infty} \subseteq \cone \mathcal{Y}$, and
		\item the approximation has at most $\delta$--distance within the unit ball, i.e.
		\begin{align}
			\label{eq_def_1}
			\text{d}_H \left( \mathcal{P}_{\infty} \cap B_1(0), \cone \mathcal{Y} \cap B_1(0) \right) \leq \delta.
		\end{align}
	\end{itemize}
\end{definition}

Note that the intersection with the unit ball in~\eqref{eq_def_1} is necessary for the Hausdorff-distance to be finite; without bounding the sets by the unit ball, the Hausdorff-distance would be infinite whenever the cones $\mathcal{P}_{\infty}$ and $ \cone \mathcal{Y}$ do not coincide.

%\out{
	%\begin{definition}{\label{solutionconcept}}
	%A pair $(\mathcal{Y}^{poi},\mathcal{Y}^{dir})$ is a \textbf{(weak) $(\delta, \varepsilon)$--solution} of~\eqref{eq:P} if $\mathcal{Y}^{poi} \neq \emptyset$ is a set of (weakly) $C$-minimal points in $\Gamma (\mathcal{X})$, $\mathcal{Y}^{dir}$ is an $\delta$--outer approximation of $\mathcal{P}_{\infty}$ and it holds
	%\begin{align*}
	%\mathcal{P} \subseteq \conv \mathcal{Y}^{poi}+\cone \mathcal{Y}^{dir}-\varepsilon\{c\}.
	%\end{align*}
	%A (weak) $(\delta, \varepsilon)$--solution $(\mathcal{Y}^{poi},\mathcal{Y})$ of~\eqref{eq:P} is a \textbf{finite (weak) $(\delta, \varepsilon)$--solution} of~\eqref{eq:P} if the sets  $\mathcal{Y}^{poi},\mathcal{Y}$ consist of finitely many elements.
	%\end{definition}
	%}
%\fu{Below, I wrote this definition in an alternative way, in order to keep the structure symmetric to the solution concept for the dual problem. But we may decide which one to keep. Both have the same label for now.} \gk{I changed it slightly, so that it's also as similar as possible to the 'old' definition for a bounded problem.}

\begin{definition}{\label{solutionconcept}}
	A pair $(\bar{\X},\mathcal{Y})$ is a \textbf{(weak) $(\varepsilon,\delta)$--solution} of~\eqref{eq:P} if $\bar{\X} \neq \emptyset$ is a set of (weak) minimizers, $\mathcal{Y}$ is a $\delta$--outer approximation of $\mathcal{P}_{\infty}$ and it holds
	\begin{align*}
		\mathcal{P} \subseteq \conv \Gamma (\bar{\X})+\cone \mathcal{Y}-\varepsilon\{c\}.
	\end{align*}
	A (weak) $(\varepsilon,\delta)$--solution $(\bar{\X},\mathcal{Y})$ of~\eqref{eq:P} is a \textbf{finite (weak) $(\varepsilon,\delta)$--solution} of~\eqref{eq:P} if the sets  $\bar{\X},\mathcal{Y}$ consist of finitely many elements.
\end{definition}

%\fu{In the discussion below, we would change $\mathcal{Y}$ by $\bar{\X}$ if we keep the new definition. This would be more in line with the definition in \cite{LRU14}. One more question regarding the notation: If we don't use $\mathcal{Y}^{poi}$, would it be better to simply write $\mathcal{Y}$ instead of $\mathcal{Y}$? This would also simplify our notation for Out and In sets later...} \\
%\gk{We either need to talk about $\bar{\X}$ or say $\mathcal{Y}^{poi} := \Gamma (\bar{\X})$. I also think using $\bar{\X}$ is 'nicer'.\\
	% Using only $\mathcal{Y}$ instead of $\mathcal{Y}$ would make the text 'visually nicer/more compact' and perhaps also easier to read to certain extent. But at the same time the subscript $dir$ is perhaps a remainder of what that set represents? But if $\mathcal{Y}$ is used only for directions this remainder is probably not needed?\\
	%Another mostly cosmetic issue: we have the (logical, I think,) order $(\bar{\X},\mathcal{Y})$ (respectively $(\mathcal{Y}^{poi},\mathcal{Y})$) for the sets, but the errors $(\delta, \varepsilon)$ have in a sense reverse order ($\delta$ for directions, $\varepsilon$ for points...) Maybe it's not worth it to even consider... }

In contrast to the solution concept of~\cite{LRU14} for bounded problems given in~\eqref{sol-concept_bd}, Definition~\ref{solutionconcept} contains a set of directions $\mathcal{Y}$ to approximate $\mathcal{P}_{\infty}$. For a bounded~\eqref{eq:P} the choice of $\mathcal{Y} = C$ (or its generators) provides a $0$--outer approximation of $\mathcal{P}_{\infty}$. Thus, if $\bar{\mathcal{X}}$ is an $\varepsilon$--solution in the sense of~\cite{LRU14}, then the pair $(\bar{\X},\mathcal{Y})$ is an $(\varepsilon,0)$--solution in the sense of Definition~\ref{solutionconcept}.
Similarly, if the problem is self-bounded and if  $\mathcal{P_{\infty}}$ is polyhedral and its generators $\mathcal{Y}$ are known, then an $\varepsilon$--solution $\bar{\X}$ in the sense of~\cite{Ulus18} implies that the pair $(\bar{\X},\mathcal{Y})$ is an $(\varepsilon,0)$--solution in the sense of Definition~\ref{solutionconcept}. In all other cases, it will be necessary to compute, in addition to $\bar{\X}$, also a $\delta$--outer approximation $\mathcal{Y}$ of $\mathcal{P}_{\infty}$ in order to solve problem~\eqref{eq:P}.
%\gk{I don't think we can claim the other direction ($(\mathcal{Y}^{poi},\mathcal{Y})$ Def. 3.1 sol $\Rightarrow \; \mathcal{Y}^{poi}$ 'usual' sol. of bounded), at least not without modifying the tolerance somehow. Here $\mathcal{Y}$ need not be $\cl C$, it can be just an outer approximation. Additionally, even if $\mathcal{Y} = \cl C$, to move from 
	%$$\mathcal{P} \subseteq \conv \mathcal{Y}^{poi}+\cl C-\varepsilon\{c\}$$
	%to 
	%$$\mathcal{P} \subseteq \conv \mathcal{Y}^{poi}+ C-\tilde{\varepsilon}\{c\}$$
	% we  still need to infinitesimally increase the tolerance unless $\cl C = C$.}
In particular, this means that the solution concepts provided in~\cite{LRU14,Ulus18} are special cases of the more general solution concept proposed in Definition~\ref{solutionconcept}.

A similar concept was developed independently in~\cite{Doerfler22} in the context of approximating unbounded convex sets by polyhedra. There, the concept of a (polyhedral outer) $(\varepsilon, \delta)$-approximation is introduced, where the tolerance $\delta$ controls the recession directions similarly to~\eqref{eq_def_1} and the tolerance $\varepsilon$ controls the distance of the vertices of the outer approximation to the original set.

\subsection{Approximating the upper image}
Let us now discuss the analog to relation~\eqref{sol-concept_bd_io} for unbounded problems. Recall that in the bounded case one finds in~\eqref{sol-concept_bd_io} a polyhedral inner and outer approximation of the upper image $\mathcal{P}$ that is based on an $\varepsilon$--shift in a direction $c$. Thus, the 
polyhedral inner and outer approximation \rev{of the upper image} as well as the upper image $\mathcal{P}$ \rev{itself} have the same recession cone. In the unbounded case one is still interested in obtaining a polyhedral inner and outer approximation \rev{of $\mathcal{P}$} containing the Pareto frontier in between. We will now discuss how it can be ensured that these approximations are close enough to each other to bound the error of the approximation. In order to do that we define analogously to Definition~\ref{def_approxi_reccP} the following.
\begin{definition}\label{solutionconcept2}
	A (finite) set $\mathcal{Y} \subseteq \mathbb R^q$ is called  a \textbf{(finite) $\delta$--inner approximation} of $\mathcal{P}_\infty$ if $\cone \mathcal{Y} \subseteq \mathcal{P}_\infty$ and~\eqref{eq_def_1} holds. 
\end{definition}
The algorithm provided in Section~\ref{sec_out_appr} computes simultaneously a finite $\delta$--inner approximation as well as a finite $\delta$--outer approximation ($\mathcal{Y}_{In}$ and $\mathcal{Y}_{Out}$) of the recession cone $\mathcal{P}_\infty$, see~Theorem~\ref{thm_alg_1}, that is  $\cone \mathcal{Y}_{In} \subseteq \mathcal{P}_\infty\subseteq \cone \mathcal{Y}_{Out}$. These lead in turn to a polyhedral inner and outer approximation of the upper image
\[
\conv \Gamma(\bar{\X})+\cone \mathcal{Y}_{In}\subseteq\mathcal{P} \subseteq \conv \Gamma(\bar{\X}) +\cone \mathcal{Y}_{Out}-\varepsilon\{c\},
\]
which is the unbounded version of relation~\eqref{sol-concept_bd_io}.
Here, the three recession cones $\cone \mathcal{Y}_{In}$, $\cone \mathcal{Y}_{Out}$, and  $\mathcal{P}_\infty$ of the polyhedral inner and outer approximations and the upper image can differ, but only with at most $\delta$--distance to $\mathcal{P}_\infty$ in the sense of~\eqref{eq_def_1}.
Thus, the error level $\delta$ controls the error of the approximation in terms of an error in the recession cones, which bounds the angle between the approximated recession cone and the true recession cone $\mathcal{P}_{\infty}$, see Remark~\ref{remark_angles} below. And the error level $\varepsilon$ controls the error of the approximation around the Pareto frontier in terms of an $\varepsilon$--shift in a direction $c$ as in the classical solution concept~\eqref{sol-concept_bd_io} up to the point of the frontier where the $\delta$ approximation of the recession cones take over.
Note that for the solution concept in Definition~\ref{solutionconcept} the $\delta$--outer approximation of $\mathcal{P}_\infty$ suffices, as a polyhedral outer approximation of the Pareto frontier suffices to approximately solve the problem.

\begin{remark}
	\label{remark_angles}
	Recall the connection between norm, inner product and angle: For the angle $\alpha$ between vectors $x, y \in \mathbb{R}^q$ it holds $\cos \alpha = \frac{x^\T y}{\|x\|_2 \|y\|_2}$. Given vectors $x, y \in \mathbb{R}^q$ normalized in the $\ell_2$ norm, i.e. $\norm{x}_2 = \norm{y}_2 = 1$, we have $1 - \cos \alpha \leq \frac{\epsilon^2}{2}$ if and only if $\norm{x - y}_2 \leq \epsilon$. Since the $\ell_1$ and $\ell_2$ norms are equivalent, for the $\ell_1$ norm we obtain a similar result. Given vectors $x, y \in \mathbb{R}^q$ with $\norm{x}_1 = \norm{y}_1 = 1$, it holds $1 - \cos \alpha \leq \frac{q \epsilon^2}{2}$  if and only if $\norm{x - y}_1 \leq \epsilon$. This shows that the condition~\eqref{eq_def_1} constrains the angle between the nearest directions of $\cone \mathcal{Y}$ and $\mathcal{P}_{\infty}$.  
\end{remark}

\subsection{Transformation into a bounded problem}
\label{sec_transfo}
We will now discuss how to solve an unbounded problem~\eqref{eq:P} once a $\delta$--outer approximation of the recession cone $\mathcal{P}_{\infty}$ is computed.
The key is to replace the ordering cone in problem~\eqref{eq:P} with the cone generated by the $\delta$--outer approximation of the recession cone $\mathcal{P}_{\infty}$. If this modified problem is bounded, a solution to the original problem~\eqref{eq:P} can be obtained by observing the following.

%The following remark sheds light on how to solve an unbounded problem~\eqref{eq:P} once an $\delta$--outer approximation of the recession cone $\mathcal{P}_{\infty}$ is computed.

\begin{remark}\label{remark_definition}
	%\out{
		%Definition~\ref{solutionconcept} can equivalently be formulated as follows: 
		%A pair $(\mathcal{Y}^{poi}, \mathcal{Y})$ is called a (weak) $(\delta,\varepsilon)$--solution of problem \eqref{eq:P}, if there exists $\bar{\X} \subseteq \mathcal{X}$ such that  $\mathcal{Y}^{poi} = \Gamma(\bar{\X})$ and $\bar{\X}$ is a (weak) $\varepsilon$--solution of the problem  
		%\begin{align}
		%\tag{P'} \label{eq:P'}
		%%\min_{x\in\mathcal{X}} \; \Gamma(x) \text{ w.r.t. } \leq_{\cone\mathcal{Y}}.
		%\text{minimize } \Gamma(x)&&\text{ with respect to\ } \leq_{\cone\mathcal{Y}}&&\text{ subject to } x\in\mathcal{X}
		%\end{align}
		%and~\eqref{eq_def_1} holds.
		%}\\
	%Let us revisit Definitions~\ref{def_approxi_reccP} and~\ref{solutionconcept}: 
	If a set $\mathcal{Y}$ is a $\delta$--outer approximation of the recession cone $\mathcal{P}_{\infty}$ and problem
	\begin{align}
		\tag{P'} \label{eq:P'}
		%\min_{x\in\mathcal{X}} \; \Gamma(x) \text{ w.r.t. } \leq_{\cone\mathcal{Y}}.
		\text{minimize } \Gamma(x)\quad \text{ with respect to\ } \leq_{\cone\mathcal{Y}}\quad \text{ subject to } x\in\mathcal{X}
	\end{align}
	is bounded, then~\cite[Algorithm 1]{LRU14} finds a (weak) $\varepsilon$--solution $\bar{\X}$ of~\eqref{eq:P'}, whenever it terminates in finitely many steps. By Definitions~\ref{def_approxi_reccP} and~\ref{solutionconcept}, the pair $(\bar{\X}, \mathcal{Y})$ provides a (weak) $(\varepsilon,\delta)$--solution of problem~\eqref{eq:P}.
\end{remark}

This means that in order to solve problem~\eqref{eq:P}, one needs to find a $\delta$--outer approximation $\mathcal{Y}$ of the recession cone $\mathcal{P}_{\infty}$ such that problem~\eqref{eq:P'} is bounded. An algorithm providing such an approximation $\mathcal{Y}$ (that also guarantees boundedness of~\eqref{eq:P'}) is deduced in Section~\ref{sec_out_appr}, see Theorem~\ref{thm_alg_1}.
Then, the unbounded problem~\eqref{eq:P} with ordering cone $C$ can be transformed into a bounded problem~\eqref{eq:P'}
with ordering cone $\cone\mathcal{Y}$, which can be solved by known algorithms (e.g. \cite{LRU14, DorLohSchWei2021}).
By Remark~\ref{remark_definition},  the solution $\bar{\mathcal{X}}$ of the bounded problem~\eqref{eq:P'}, together with the approximation $\mathcal{Y}$ of $\mathcal{P}_{\infty}$, solve the original problem~\eqref{eq:P} in the sense of Definition~\ref{solutionconcept}.

\section{Scalarizations}
\label{sec_scalar}
The algorithm we introduce in Section~\ref{sec_out_appr} uses two scalarizations of the vector optimization problem~\eqref{eq:P}: the well-known weighted sum scalarization and the Pascoletti-Serafini scalarization. In this section we recall both of them and provide related results.

Let $w \in \mathbb R^q \setminus \{0\}$ be some parameter vector. The following convex program is the {\em{weighted sum scalarization}} of \eqref{eq:P},
%\begin{align}
%\tag{P\textsubscript{1}($w$)} 
%\label{P1(w)}
%\min \left\lbrace \trans{w}\Gamma(x) \; \vert \; x\in\mathcal{X} \right\rbrace.
%\end{align}
\begin{align}
	\tag{P\textsubscript{1}($w$)} 
	\label{P1(w)}
	\text{minimize } \trans{w}\Gamma(x)\quad \text{ subject to } x\in\mathcal{X}.
\end{align}
It is well-known that if $w \in C^{+} \setminus \set{0}$, then an optimal solution $x^w$ of \eqref{P1(w)} is a weak minimizer of \eqref{eq:P} (see e.g. \cite{Jahn04, Luc2011}). The following results will be used later.

\begin{lemma}[Proposition 4.12 and Theorem 4.14 in \cite{Ulus18}]
	\label{lemma_weights}
	The set of weights 
	$$W = \left\lbrace w \in C^+ \; \vert \; \eqref{P1(w)} \text{ is bounded} \right\rbrace$$ 
	is a convex cone for which it holds $\mathcal{P}_{\infty}^+ = \cl W$. If~\eqref{eq:P} is self-bounded, it further holds $\mathcal{P}_{\infty}^+ = W$. Furthermore, if $\{0\} \neq \mathcal{P}_{\infty}^+ = W$, then the problem~\eqref{eq:P} is self-bounded.
\end{lemma}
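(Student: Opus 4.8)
The plan is to prove the three assertions of Lemma~\ref{lemma_weights} by working with the weighted sum scalarization \eqref{P1(w)} and relating its boundedness to properties of the recession cone $\mathcal{P}_{\infty}$, then taking closures where needed. First I would observe that for $w \in C^+$, the objective $\trans{w}\Gamma(x)$ is bounded below over $\mathcal{X}$ if and only if $\inf_{y \in \mathcal{P}} \trans{w} y > -\infty$, since $\mathcal{P} = \cl(\Gamma(\mathcal{X}) + C)$ and $\trans{w} y \ge 0$ for all $y \in C$ when $w \in C^+$; adding $C$ and taking closure does not change the infimum of a linear functional that is bounded below on $C$. A standard convex-analysis fact then says that a closed convex set $\mathcal{P}$ has $\inf_{y \in \mathcal{P}} \trans{w} y > -\infty$ precisely when $\trans{w} d \ge 0$ for every recession direction $d \in \mathcal{P}_{\infty}$, i.e. precisely when $w \in \mathcal{P}_{\infty}^+$. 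Hence $W = C^+ \cap \mathcal{P}_{\infty}^+$. Since $C \subseteq \mathcal{P}_{\infty}$ always holds, we get $\mathcal{P}_{\infty}^+ \subseteq C^+$, so in fact $W = \mathcal{P}_{\infty}^+$ — wait, but the lemma only claims $\mathcal{P}_{\infty}^+ = \cl W$ in general and equality $\mathcal{P}_{\infty}^+ = W$ under self-boundedness. So the subtlety must be that ``$\mathcal{P}$ closed convex and $w$ in its polar $\Rightarrow$ infimum finite'' can fail: the infimum over a closed convex set of a linear functional nonnegative on the recession cone need not be attained or even finite unless one has e.g. $w \in \Int \mathcal{P}_{\infty}^+$ or a boundedness/properness condition. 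The correct statement is that $\Int \mathcal{P}_{\infty}^+ \subseteq W \subseteq \mathcal{P}_{\infty}^+$, and taking closures gives $\cl W = \mathcal{P}_{\infty}^+$ (using that $\mathcal{P}_{\infty}^+$ is closed convex with the same closure as its interior when it is solid; if it is not solid one argues within its affine hull). This is the content I would cite from \cite{Ulus18}.

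Concretely, for the first claim I would: (i) show $W$ is a convex cone — conic-ness is immediate since scaling $w$ scales the objective; convexity follows because if \eqref{P1($w^1$)} and \eqref{P1($w^2$)} are bounded below by $\beta_1, \beta_2$ then $\trans{(w^1+w^2)}\Gamma(x) \ge \beta_1 + \beta_2$ for all $x \in \mathcal{X}$, and $w^1 + w^2 \in C^+$ since $C^+$ is a convex cone; (ii) establish $W \subseteq \mathcal{P}_{\infty}^+$ by noting that if $\trans{w}\Gamma(\cdot)$ is bounded below on $\mathcal{X}$ but $\trans{w} d < 0$ for some $d \in \mathcal{P}_{\infty}$, then picking any $\bar y \in \mathcal{P}$ the points $\bar y + \alpha d \in \mathcal{P}$ for $\alpha \to \infty$ drive $\trans{w}(\bar y + \alpha d) \to -\infty$, and since $\inf_{\mathcal{X}} \trans{w}\Gamma = \inf_{\mathcal{P}} \trans{w}\cdot$, this is a contradiction; (iii) establish $\Int \mathcal{P}_{\infty}^+ \subseteq W$ (or the density of $W$ in $\mathcal{P}_{\infty}^+$) using a recession/coercivity argument: if $w \in \Int \mathcal{P}_{\infty}^+$ then $\trans{w} d > 0$ on $\mathcal{P}_{\infty} \setminus \{0\}$, which by a standard lemma forces $\trans{w}\cdot$ to be coercive (hence bounded below, in fact attaining its min) on the closed convex set $\mathcal{P}$. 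Combining (ii) and (iii) with the fact that $\mathcal{P}_{\infty}^+$ equals the closure of its relative interior yields $\cl W = \mathcal{P}_{\infty}^+$.

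For the second claim, under self-boundedness $\mathcal{P} \subseteq \{\hat p\} + \mathcal{P}_{\infty}$, so for any $w \in \mathcal{P}_{\infty}^+$ we directly get $\trans{w} y \ge \trans{w}\hat p$ for all $y \in \mathcal{P}$, hence \eqref{P1(w)} is bounded below and $w \in W$; together with (ii) this gives $W = \mathcal{P}_{\infty}^+$. For the third claim (the converse), suppose $\{0\} \neq \mathcal{P}_{\infty}^+ = W$. I would argue by contradiction: if \eqref{eq:P} were not self-bounded, then $\mathcal{P} \not\subseteq \{\hat p\} + \mathcal{P}_{\infty}$ for every $\hat p$; in particular, pick $\hat p \in \mathcal{P}$ and a point of $\mathcal{P}$ outside $\{\hat p\} + \mathcal{P}_{\infty}$, separate it from the closed convex set $\{\hat p\} + \mathcal{P}_{\infty}$ by a hyperplane with normal $w$; this $w$ lies in $(\mathcal{P}_{\infty})^+ \setminus \{0\}$ (the separating functional is bounded below on a set with recession cone $\mathcal{P}_{\infty}$) but witnesses that $\trans{w}\cdot$ is not bounded below on $\mathcal{P}$, i.e. $w \notin W$, contradicting $W = \mathcal{P}_{\infty}^+$. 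Here the nontriviality $\mathcal{P}_{\infty}^+ \neq \{0\}$ is what makes the separation meaningful and rules out degenerate cases. I expect the main obstacle to be the careful handling of the closure in claim one — precisely, showing $\Int \mathcal{P}_{\infty}^+ \subseteq W$ (the coercivity-on-a-closed-convex-set argument) and that $\cl(\Int \mathcal{P}_{\infty}^+) = \mathcal{P}_{\infty}^+$ when $\mathcal{P}_{\infty}^+$ may fail to be solid, which requires restricting attention to $\operatorname{aff}(\mathcal{P}_{\infty}^+)$ and possibly invoking that $\mathcal{P}$, being an upper image, has $\Int \mathcal{P} \neq \emptyset$ so $\mathcal{P}_{\infty}^+$ is pointed. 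Since all of these are attributed to \cite[Proposition 4.12, Theorem 4.14]{Ulus18}, in the paper I would simply cite them; the sketch above records the argument one would reconstruct.
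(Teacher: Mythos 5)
The paper offers no proof of this lemma at all: it is stated purely as a citation of Proposition 4.12 and Theorem 4.14 of~\cite{Ulus18}, so your closing decision to ``simply cite them'' is exactly what the paper does, and on that level the two match. Your reconstruction of the first two assertions is also essentially sound: $W$ is a convex cone by closure under addition and positive scaling; $W \subseteq \mathcal{P}_{\infty}^+$ follows from $\inf_{x\in\mathcal{X}}\trans{w}\Gamma(x) = \inf_{y\in\mathcal{P}}\trans{w}y$ for $w \in C^+$ together with the recession-direction argument; the reverse inclusion up to closure comes from the relative interior of $\mathcal{P}_{\infty}^+$ (you correctly flag that $\Int \mathcal{P}_{\infty}^+$ may be empty, since $\mathcal{P}_{\infty}^+ \subseteq C^+$ is only guaranteed to be pointed, so one must work with $\operatorname{ri}$); and self-boundedness immediately gives the uniform lower bound $\trans{w}\hat{p}$ for every $w \in \mathcal{P}_{\infty}^+$.

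There is, however, a genuine gap in your sketch of the third assertion (the converse: $\{0\}\neq\mathcal{P}_{\infty}^+ = W$ implies self-boundedness). You fix one $\hat{p}\in\mathcal{P}$, pick $y_0 \in \mathcal{P}\setminus(\{\hat{p}\}+\mathcal{P}_{\infty})$, and separate; this produces some $w\in\mathcal{P}_{\infty}^+\setminus\{0\}$ with $\trans{w}y_0 < \trans{w}\hat{p}$, but that single strict inequality does not ``witness that $\trans{w}\cdot$ is not bounded below on $\mathcal{P}$.'' The scalarization \eqref{P1(w)} can perfectly well be bounded while its infimum over $\mathcal{P}$ lies below $\trans{w}\hat{p}$ --- failure of self-boundedness is a statement about \emph{all} translates $\hat{p}$ simultaneously, and one separating hyperplane for one bad point and one choice of $\hat{p}$ certifies nothing about unboundedness of the infimum. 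The real content of the converse is to show that when $\beta(w):=\inf_{y\in\mathcal{P}}\trans{w}y$ is finite on all of the closed cone $\mathcal{P}_{\infty}^+$, the concave, positively homogeneous, upper semicontinuous function $\beta$ admits a \emph{uniform} linear minorant $w\mapsto\trans{w}\hat{p}$ on $\mathcal{P}_{\infty}^+$ (equivalently, the support function of $\mathcal{P}$ is majorized by a linear function on its closed domain); this is where the nontriviality assumption and the actual work in~\cite{Ulus18} enter, and it is not delivered by the separation argument as you set it up. Since the paper itself only cites the result, this gap does not affect the paper, but the sketch as written would not reconstruct the cited theorem.
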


\begin{proposition}{\label{prop: Pbounded}}
	Let Assumption~\ref{assumption1}\rev{(a)-(c)} be satisfied. Then, problem~\eqref{eq:P} is bounded if and only if (P$_1(z)$) is bounded for each $z \in C^{+}$. 
\end{proposition}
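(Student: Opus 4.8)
The plan is to prove both implications directly from the definitions, using the characterization of boundedness in terms of the recession cone and the result of Lemma~\ref{lemma_weights}. Recall that problem~\eqref{eq:P} is bounded if and only if $\mathcal{P} \subseteq \{\hat p\} + C$ for some $\hat p$, and since $\mathcal{P}$ is a closed convex set containing a translate of $C$, this is equivalent to $\mathcal{P}_\infty = C$ (the inclusion $C \subseteq \mathcal{P}_\infty$ always holds, so boundedness amounts to $\mathcal{P}_\infty \subseteq C$). Dualizing, boundedness is equivalent to $C^+ \subseteq \mathcal{P}_\infty^+$, and since $\mathcal{P}_\infty^+ \subseteq C^+$ always (because $C \subseteq \mathcal{P}_\infty$), boundedness is equivalent to $\mathcal{P}_\infty^+ = C^+$.

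For the forward direction, suppose \eqref{eq:P} is bounded, so $\mathcal{P} \subseteq \{\hat p\} + C$. Fix $z \in C^+$. Then for every $x \in \mathcal{X}$ we have $\Gamma(x) \in \mathcal{P}$, hence $\Gamma(x) - \hat p \in C$, hence $z^\T\Gamma(x) \geq z^\T \hat p$; thus $(\mathrm{P}_1(z))$ is bounded below. This gives the easy implication.

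For the converse, suppose $(\mathrm{P}_1(z))$ is bounded for every $z \in C^+$. By definition of the set $W$ in Lemma~\ref{lemma_weights}, this means $C^+ \subseteq W$, and since $W \subseteq C^+$ by definition, we get $W = C^+$. If $C^+ = \{0\}$ this would force $C = \mathbb{R}^q$, contradicting that $C$ is non-trivial; hence $\{0\} \neq \mathcal{P}_\infty^+ = \cl W = \cl C^+ = C^+$ (the dual cone $C^+$ is closed, and $\mathcal{P}_\infty^+ = \cl W$ by Lemma~\ref{lemma_weights}). Wait — I must be careful here: Lemma~\ref{lemma_weights} gives $\mathcal{P}_\infty^+ = \cl W$ in general and $\mathcal{P}_\infty^+ = W$ only under self-boundedness, so instead I argue: $W = C^+$ is already closed, so $\mathcal{P}_\infty^+ = \cl W = W = C^+$. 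Then $\mathcal{P}_\infty^+ = C^+$, and taking dual cones (both $C$ and $\mathcal{P}_\infty$ are closed convex cones) yields $\mathcal{P}_\infty = (\mathcal{P}_\infty^+)^+ = (C^+)^+ = C$. This means $\mathcal{P}_\infty = C$, so \eqref{eq:P} is bounded.

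The main obstacle is making sure the duality steps are valid: one needs that $\mathcal{P}_\infty$ is a closed convex cone (it is the recession cone of the closed convex set $\mathcal{P}$), that the bipolar theorem applies to recover $\mathcal{P}_\infty$ and $C$ from their dual cones, and that the passage ``$\mathcal{P}\subseteq\{\hat p\}+C$ iff $\mathcal{P}_\infty\subseteq C$'' is justified — this uses that $\mathcal{P}$ is nonempty, closed, convex, and that $\mathcal{P} + C = \mathcal{P}$ so $C \subseteq \mathcal{P}_\infty$. An alternative, more self-contained route for the converse avoids Lemma~\ref{lemma_weights} entirely: if \eqref{eq:P} were unbounded then $\mathcal{P}_\infty \supsetneq C$, so one could pick $d \in \mathcal{P}_\infty \setminus C$ and, by separation, a $z \in C^+$ with $z^\T d < 0$; since $d$ is a recession direction of $\mathcal{P}$, $\inf_{y \in \mathcal{P}} z^\T y = -\infty$, so $(\mathrm{P}_1(z))$ is unbounded, contradicting the hypothesis. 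I would likely present this separation argument as the cleaner proof, falling back on Lemma~\ref{lemma_weights} only if an even shorter citation-based argument is preferred.
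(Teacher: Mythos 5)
Your forward direction is correct and is in fact more elementary than the paper's (the paper derives both directions at once from Lemma~\ref{lemma_weights} together with a characterization of boundedness cited from the literature). The converse, however, has a genuine gap, and it infects both of your proposed routes. The culprit is the claimed equivalence ``\eqref{eq:P} is bounded iff $\mathcal{P}_\infty = C$'' (equivalently, ``unbounded implies $\mathcal{P}_\infty \supsetneq C$''). Only one direction of this is true: a closed convex set contained in a translate of $C$ has recession cone contained in $C$, but $\mathcal{P}_\infty = C$ does \emph{not} force $\mathcal{P} \subseteq \{\hat p\} + C$. Concretely, take $q=2$, $C = \mathbb{R}^2_+$, $\mathcal{X} = \mathbb{R}$ and $\Gamma(x) = \trans{(e^x, -x)}$; then $\mathcal{P} = \{y \in \mathbb{R}^2 \mid y_1 > 0,\ y_2 \geq -\log y_1\}$ is a closed convex upper image with $\mathcal{P}_\infty = \mathbb{R}^2_+ = C$, yet $\inf_{y \in \mathcal{P}} y_2 = -\infty$, so the problem is unbounded. (The proposition itself is not contradicted: here (P$_1(z)$) is unbounded for $z = \trans{(0,1)} \in C^+$, so your hypothesis $W = C^+$ fails; the point is that $W$ differs from $\cl W = \mathcal{P}_\infty^+$ only on the boundary, which is precisely the information that any argument seeing only the recession cone cannot detect.) Your ``cleaner'' separation argument dies immediately on this example, since $\mathcal{P}_\infty \setminus C = \emptyset$ there.

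The correct characterization, which the paper takes from \cite[Lemma 2.2]{KovacovaRudloff2021}, is that \eqref{eq:P} is bounded if and only if it is \emph{self-bounded} and $\mathcal{P}_\infty = \cl C$; the conjunction is essential. The good news is that your main argument already assembles all the needed ingredients and only omits the final invocation: from $W = C^+$ closed and nonzero you correctly obtain $\{0\} \neq \mathcal{P}_\infty^+ = \cl W = W$, and the third assertion of Lemma~\ref{lemma_weights} then yields self-boundedness, i.e.\ $\mathcal{P} \subseteq \{\hat p\} + \mathcal{P}_\infty$ for some $\hat p$; combining this with $\mathcal{P}_\infty = (\mathcal{P}_\infty^+)^+ = (C^+)^+ = C$ gives $\mathcal{P} \subseteq \{\hat p\} + C$. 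Replacing the step ``$\mathcal{P}_\infty = C$, so \eqref{eq:P} is bounded'' by this two-line argument closes the gap and brings your proof in line with the paper's.
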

\begin{proof}
	Note that the problem~\eqref{eq:P} is bounded if and only if it is self-bounded with $\mathcal{P}_{\infty} = \cl C$, see \cite[Lemma 2.2]{KovacovaRudloff2021}. Thus, the result follows from closedness of $C$ and Lemma~\ref{lemma_weights}.
\end{proof}
The following observation will be particularly useful in the polyhedral case.
%\fu{Later, we use this proposition for the polyhedral case where we check only the extreme directions, so I add the following remark} 
\begin{remark} \label{rem:extdirC+}
	It follows from Proposition \ref{prop: Pbounded} and the convexity of $W$ from Lemma \ref{lemma_weights} that problem~\eqref{eq:P} is bounded if and only if (P$_1(z)$) is bounded for each extreme direction $z$ of $C^+$. 
\end{remark}

Now we discuss the second scalarization. For a point $v \in \mathbb{R}^q$ and a direction $d \in \mathbb R^q \setminus \{0\}$ the convex program 
%\begin{align}
%\tag{P\textsubscript{2}$(v, d)$} 
%\label{P2(v)}
%\max \left\lbrace z\in\real \; \vert \; x \in \mathcal{X}, \; \Gamma(x) - v - z d \leq_C 0 \right\rbrace
%\end{align}
\begin{align}
	\tag{P\textsubscript{2}$(v, d)$} 
	\label{P2(v)}
	\text{maximize } z \quad \text{ subject to } x\in\mathcal{X},\ z\in\real,\ \Gamma(x) - v - z d \leq_C 0
\end{align}
is the Pascoletti-Serafini scalarization (see~\cite{PasSer84}) with reference point $v$ and direction $-d$ as we have
%\begin{align*}
%\max &\left\lbrace z\in\real \; \vert \; x \in \mathcal{X}, \; \Gamma(x) - v - z d \leq_C 0 \right\rbrace 
%\\ &= - \min \left\lbrace z\in\real \; \vert \; x\in\mathcal{X},\;\Gamma(x)-v+z d\leq_C 0 \right\rbrace.
%\end{align*}
\begin{align*}
	\sup\{ z\in\real \mid x \in \mathcal{X},\Gamma(x) - v - z d \leq_C 0 \} = - \inf \{ z\in\real \mid x\in\mathcal{X},\Gamma(x)-v+z d\leq_C 0 \}.
	%\sup&\left\lbrace z\in\real \; \vert \; x \in \mathcal{X}, \; \Gamma(x) - v - z d \leq_C 0 \right\rbrace \\&= - \inf \left\lbrace z\in\real \; \vert \; x\in\mathcal{X},\;\Gamma(x)-v+z d\leq_C 0 \right\rbrace.
\end{align*}
Note that if $(x^*,z^*)$ is an optimal solution to \eqref{P2(v)}, then $x^*$ is a weak minimizer for \eqref{eq:P} (see e.g. \cite[Theorem 2.1.]{Eichfelder08}). The Lagrangian dual of problem~\eqref{P2(v)} is
%\begin{align}
%\tag{D\textsubscript{2}$(v, d)$} 
%\label{D2(v)}
%\min \left\lbrace \sup\limits_{x\in\mathcal{X}} \left\lbrace -\trans{w}\Gamma(x) \right\rbrace +\trans{w}v \; \vert \; \trans{w}d=-1,\;w\geq_{C^+} 0 \right\rbrace.
%\end{align}
\begin{align}
	\tag{D\textsubscript{2}$(v, d)$} 
	\label{D2(v)}
	\text{minimize } \sup_{x\in\mathcal{X}} \left\lbrace -\trans{w}\Gamma(x) \right\rbrace +\trans{w}v  \quad \text{ subject to } \trans{w}d=-1,\;w\geq_{C^+} 0 .
\end{align}

Now we provide some results concerning problems~\eqref{P2(v)} and~\eqref{D2(v)} that will be useful later.
\begin{proposition}\label{prop:strongduality}
	Let Assumption~\ref{assumption1}\rev{(a)-(c)} be satisfied and let one of the following hold
	\begin{enumerate}
		\item $v \in \Gamma(\mathcal{X}) + \Int C$ and $d \in \mathbb R^q \setminus \{0\}$, or
		\item $v \in \mathbb{R}^q$ and $d \in - \Int C$.
	\end{enumerate} 
	Then, strong duality holds for~\eqref{P2(v)} and~\eqref{D2(v)}. In particular, if~\eqref{P2(v)} is bounded, then \eqref{D2(v)} has an optimal solution. If \eqref{P2(v)} has an optimal solution, then their optimal values coincide. 
\end{proposition}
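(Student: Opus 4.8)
The plan is to establish strong duality via a standard convex-analytic argument, checking a Slater-type constraint qualification under either hypothesis. First I would rewrite problem~\eqref{P2(v)} in the canonical form of a convex program with a cone constraint: the objective $-z$ is to be minimized (negating the maximization), the decision variable is $(x,z) \in \mathcal{X} \times \real$, and the single vector-valued constraint is $g(x,z) := \Gamma(x) - v - z d \leq_C 0$. Since $\Gamma$ is $C$-convex and the map $(x,z)\mapsto -zd$ is affine, $g$ is $C$-convex, so this is a genuine convex program over the closed convex set $\mathcal{X}\times\real$. Its Lagrangian dual, obtained by dualizing the cone constraint with a multiplier $w \geq_{C^+} 0$, is
\[
\text{minimize } \sup_{x\in\mathcal{X},\,z\in\real}\left\{-z - \trans{w}\big(\Gamma(x)-v-zd\big)\right\}\quad\text{subject to } w\geq_{C^+}0.
\]
The inner supremum over $z$ is finite only if the coefficient of $z$, namely $-1 + \trans{w}d$, vanishes, i.e. $\trans{w}d = -1$; on that set the $z$-terms cancel and the supremum reduces to $\sup_{x\in\mathcal{X}}\{-\trans{w}\Gamma(x)\} + \trans{w}v$, which is exactly~\eqref{D2(v)}. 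So the two problems are indeed a primal-dual pair, and weak duality is automatic.

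Next I would verify a constraint qualification guaranteeing zero duality gap and dual attainment. Under hypothesis~(1), $v \in \Gamma(\mathcal{X}) + \Int C$: pick $\bar{x}\in\mathcal{X}$ with $v - \Gamma(\bar{x}) \in \Int C$; then for the point $(\bar x, z)$ with $z>0$ small we get $\Gamma(\bar x) - v - zd = -(v-\Gamma(\bar x)) - zd$, which lies in $-\Int C$ for $z$ small enough since $\Int C$ is open and $v-\Gamma(\bar x)\in\Int C$. Hence $g(\bar x,z) \in -\Int C$, a strictly feasible point, so Slater's condition holds. Under hypothesis~(2), $d \in -\Int C$: take any $\bar x\in\mathcal{X}$ and note $\Gamma(\bar x) - v - zd = (\Gamma(\bar x)-v) + z(-d)$; since $-d\in\Int C$, for $z$ sufficiently large this sum lies in $\Int C$, whereas we need it in $-\Int C$. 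So instead I should take $z$ sufficiently negative (i.e. $z \to -\infty$): then $z(-d) = -z\,d$, hmm — let me recompute. We need $g(\bar x,z)=\Gamma(\bar x)-v-zd \in -\Int C$, i.e. $v - \Gamma(\bar x) + zd \in \Int C$. With $d\in-\Int C$ we have $-d\in\Int C$, so $zd \in \Int C$ precisely when $z<0$, and for $z$ large in absolute value (very negative) $v-\Gamma(\bar x)+zd$ lies in $\Int C$ because $\Int C$ is a convex cone absorbing the fixed vector $v-\Gamma(\bar x)$ in the direction $zd$. Thus again Slater holds. With Slater's condition in force, the standard strong-duality theorem for convex programs (e.g. as in Boyd--Vandenberghe or Bot's book on conjugate duality in vector optimization) gives: if the primal~\eqref{P2(v)} is bounded (finite optimal value), then the dual~\eqref{D2(v)} attains its optimum and the optimal values coincide; and if moreover the primal attains, the values still coincide.

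The last sentence of the proposition (if \eqref{P2(v)} has an optimal solution then the optimal values coincide) is then immediate, since attainment implies the finite-value case already covered. The main obstacle I anticipate is \emph{not} the duality itself — that is routine once the problem is cast in conic-convex form — but rather the care needed in the Slater verification under hypothesis~(2), where the strictly feasible point must be found by driving the scalar variable $z$ to $-\infty$ and one must check that $\mathcal{X}$ being merely closed convex with nonempty interior (rather than compact) still suffices; it does, because only one feasible point with $g(\bar x,z)\in-\Int C$ is needed and the direction $d\in-\Int C$ supplies it for any fixed $\bar x\in\mathcal{X}$. A secondary subtlety is confirming that the dual problem written in the form~\eqref{D2(v)} is exactly the Lagrangian dual and that the sign conventions for the Pascoletti--Serafini direction ($-d$ versus $d$) are consistent with the displayed $\sup$--$\inf$ identity; this is bookkeeping but worth stating explicitly.
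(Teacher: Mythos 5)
Your approach is the same as the paper's: cast \eqref{P2(v)} as a convex program, verify a Slater-type constraint qualification under each hypothesis, and invoke standard strong duality for zero gap and dual attainment. Two points fall short of a complete argument, though. First, a sign slip in the dual derivation: with the Lagrangian written as $-z - \trans{w}\bigl(\Gamma(x)-v-zd\bigr)$ the coefficient of $z$ is $\trans{w}d-1$, which forces $\trans{w}d=1$, not the constraint $\trans{w}d=-1$ appearing in \eqref{D2(v)}; the Lagrangian of the minimization $\min(-z)$ subject to $g\leq_C 0$ is $-z+\trans{w}g(x,z)$ (equivalently, $\sup_{x,z}\{z-\trans{w}g(x,z)\}$ in the maximization form), which does yield $\trans{w}d=-1$. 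You flagged the sign bookkeeping yourself, but as written the derivation does not reproduce \eqref{D2(v)}. Second, and more substantively, your Slater points are only guaranteed to lie in $\mathcal{X}$, not in $\Int\mathcal{X}$: in case (1) the point $\bar{x}$ with $v-\Gamma(\bar{x})\in\Int C$ may sit on the boundary of $\mathcal{X}$, and in case (2) you take an arbitrary $\bar{x}\in\mathcal{X}$. The statement of Slater's condition you cite (Boyd--Vandenberghe) requires the strictly feasible point to lie in the relative interior of the problem's domain, which here is $\Int\mathcal{X}\times\real$ because $\Gamma$ is defined only on $\mathcal{X}$. The paper addresses exactly this: in case (1) it forms a convex combination $x_\lambda$ carrying a strictly positive weight on some $\tilde{x}\in\Int\mathcal{X}$, so that $x_\lambda\in\Int\mathcal{X}$ while strict cone feasibility is preserved, and in case (2) it chooses $\tilde{x}\in\Int\mathcal{X}$ from the outset. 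Either adopt that construction, or explicitly invoke a version of the constraint qualification for cone-inequality-constrained programs over an abstract convex set that requires only a point of $\mathcal{X}$ with $g\in-\Int C$ (the perturbation-function argument shows this suffices in the absence of equality constraints); as it stands, your verification does not match the hypotheses of the theorem you invoke.
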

\begin{proof}
	Note that \eqref{P2(v)} is a convex programming problem. Hence, proving Slater's condition is sufficient. Let $\tilde{x} \in \Int \mathcal{X}$.
	We consider the two cases separately.
	\begin{enumerate}
		\item 
		%\fu{(Do we need to state existence? $(\bar{x},\bar{z})$ for sufficiently small $\bar{z}$ is feasible if needed.} \gk{Existence of a feasible solution? That follows automatically from $v \in \Gamma(\mathcal{X}) + \Int C$, or?} 
		Let $(x^*,z^*)$ be a feasible solution of \eqref{P2(v)}. Since $v \in \Gamma(\mathcal{X})+\Int C$, there exists $\bar{x}\in \mathcal{X}, \bar{c}\in \Int C$ such that $v = \Gamma(\bar{x})+\bar{c}$. Let $\lambda_1 \in (0,1)$. Since $\bar{c} \in \Int C$,
		there exists $0 < \lambda_2 \leq 1-\lambda_1$ such that $c_{\lambda}:=\lambda_1  \bar{c} + \lambda_2 (v-\Gamma(\tilde{x})) \in \Int C$. 
		Let $x_{\lambda}:= \lambda_1\bar{x}+\lambda_2 \tilde{x}+(1-\lambda_1-\lambda_2) x^*$. As $\Gamma$ is $C$-convex, we have
		\begin{align*}
			\Gamma(x_{\lambda}) &\leq_C \lambda_1 \Gamma(\bar{x})+\lambda_2 \Gamma(\tilde{x})+(1-\lambda_1-\lambda_2)\Gamma(x^*)\\
			&\leq_C \lambda_1(v- \bar{c})+\lambda_2 \Gamma(\tilde{x}) + (1-\lambda_1-\lambda_2)(v+z^*d)\\
			&= v + (1-\lambda_1-\lambda_2)z^*d -c_{\lambda}\\
			&<_C v + z_{\lambda}d,
		\end{align*}
		where $z_{\lambda}= (1-\lambda_1-\lambda_2)z^*$. Moreover, as $\lambda_2>0$, we also have $x_{\lambda}\in\Int\mathcal{X}$. Hence $(x_{\lambda},z_{\lambda})$ is a Slater point.
		
		\item  
		Note that since $C \subseteq \mathbb{R}^q$ is a solid convex cone, for $c \in \Int C$ and any $y \in \mathbb{R}^q$ there exists $t \in \mathbb{R}$ such that $y + t c \in \Int C$. Now consider $y=v-\Gamma (\tilde{x})$ for some $\tilde{x}\in  \mathcal{X} $ and $v\in \mathbb{R}^q $. This implies that there exists $\tilde{z}=-t \in \mathbb{R}$ such that $\Gamma (\tilde{x}) - v - \tilde{z}d \in - \Int C$, or equivalently $\Gamma (\tilde{x}) - v - \tilde{z}d <_{C} 0$, so $(\tilde{x}, \tilde{z})$ is a Slater point.  
	\end{enumerate}
	
\end{proof}

%\begin{corollary}\label{prop:if_recc_then_sol}
%\new{Let Assumption~\ref{assumption1} be satisfied and one of the following holds
	%\begin{enumerate}
	%\vspace{-0.2cm}\item $v \in \Gamma(\mathcal{X}) + \Int C$, or
	%\vspace{-0.2cm}\item $d = -c \in - \Int C$.
	%\vspace{-0.2cm}
	%\end{enumerate} }
	%Then, \eqref{D2(v)} has an optimal solution. If \eqref{P2(v)} has an optimal solution, then their optimal values coincide. 
	%\end{corollary}
	
	\begin{proposition}
		\label{prop:1}
		Let Assumption~\ref{assumption1}\rev{(a)-(c)} be satisfied and let one of the following hold
		\begin{enumerate}
			\item $v \in \Gamma(\mathcal{X}) + \Int C$ and $d \in \mathbb R^q \setminus \{0\}$, or
			\item $v \in \mathbb{R}^q$ and $d \in - \Int C$.
		\end{enumerate} 
		Assume that $(x^{v,d},z^{v,d})$ and $w^{v,d}$ are optimal solutions to the problems \eqref{P2(v)} and \eqref{D2(v)}, respectively. Then $x^{v,d}$ is an optimal solution of (P$_1(w^{v,d})$) and $\alpha :=\trans{(w^{v,d})} \Gamma(x^{v,d})$ is its optimal objective value. Furthermore, $\set{y \in \real^q \mid \trans{(w^{v,d})} y = \alpha}$ is a supporting hyperplane of $\mathcal{P}$ at $\Gamma(x^{v,d})$ such that $\set{y \in \real^q \mid \trans{(w^{v,d})} y \geq \alpha} \supseteq \mathcal{P}$.
	\end{proposition}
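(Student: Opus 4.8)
The plan is to extract everything from the strong duality between \eqref{P2(v)} and \eqref{D2(v)} furnished by Proposition~\ref{prop:strongduality}. Write $w := w^{v,d}$, $x^* := x^{v,d}$ and $z^* := z^{v,d}$ for brevity. Since \eqref{P2(v)} has the optimal solution $(x^*,z^*)$, Proposition~\ref{prop:strongduality} gives that the primal and dual optimal values coincide, so (recalling that $w\geq_{C^+}0$ means precisely $w\in C^+$, and that $w$ is dual optimal)
\[
z^* = \sup_{x\in\mathcal{X}}\left\{-\trans{w}\Gamma(x)\right\}+\trans{w}v .
\]

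Next I would combine this with primal feasibility. The pair $(x^*,z^*)$ satisfies $\Gamma(x^*)-v-z^*d\leq_C 0$, i.e. $v+z^*d-\Gamma(x^*)\in C$; pairing with $w\in C^+$ and using the dual constraint $\trans{w}d=-1$ yields $\trans{w}v - z^* - \trans{w}\Gamma(x^*)\geq 0$, that is $z^*\leq \trans{w}v-\trans{w}\Gamma(x^*)$. Plugging this into the displayed identity gives $\sup_{x\in\mathcal{X}}\{-\trans{w}\Gamma(x)\}\leq -\trans{w}\Gamma(x^*)$, while the reverse inequality is immediate since $x^*\in\mathcal{X}$. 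Hence $\inf_{x\in\mathcal{X}}\trans{w}\Gamma(x)=\trans{w}\Gamma(x^*)=:\alpha$, which is exactly the claim that $x^*$ is an optimal solution of (P$_1(w^{v,d})$) with optimal value $\alpha$.

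For the supporting hyperplane I would note $\alpha=\inf_{x\in\mathcal{X}}\trans{w}\Gamma(x)=\inf_{y\in\Gamma(\mathcal{X})}\trans{w}y$, so $\trans{w}y\geq\alpha$ for every $y\in\Gamma(\mathcal{X})$; since $w\in C^+$, adding any $c\in C$ preserves the inequality, so $\trans{w}y'\geq\alpha$ for all $y'\in\Gamma(\mathcal{X})+C$, and because a closed half-space containing a set also contains its closure, $\trans{w}y'\geq\alpha$ holds on $\mathcal{P}=\cl(\Gamma(\mathcal{X})+C)$. Thus $\set{y\in\real^q \mid \trans{w}y\geq\alpha}\supseteq\mathcal{P}$. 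Finally $\Gamma(x^*)\in\Gamma(\mathcal{X})\subseteq\mathcal{P}$ lies on $\set{y\in\real^q \mid \trans{w}y=\alpha}$, and $w\neq 0$ is forced by $\trans{w}d=-1$, so this hyperplane indeed supports $\mathcal{P}$ at $\Gamma(x^*)$.

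I do not expect a genuine obstacle here: once strong duality is invoked the argument is essentially bookkeeping. The only point needing care is tracking the sign conventions of the Lagrangian dual — in particular the switch between $\sup\{-\trans{w}\Gamma(\cdot)\}$ and $\inf\{\trans{w}\Gamma(\cdot)\}$ and the role of the normalization $\trans{w}d=-1$ — together with the routine closure step used to pass from $\Gamma(\mathcal{X})+C$ to $\mathcal{P}$.
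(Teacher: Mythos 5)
Your argument is correct and follows essentially the same route as the paper's proof: invoke strong duality (Proposition~\ref{prop:strongduality}), combine it with primal feasibility of $(x^{v,d},z^{v,d})$, the constraint $\trans{(w^{v,d})}d=-1$ and $w^{v,d}\in C^+$ to identify $\alpha$ as the optimal value of (P$_1(w^{v,d})$), then pass from $\Gamma(\mathcal{X})$ to $\Gamma(\mathcal{X})+C$ to $\mathcal{P}$ by positivity of $w^{v,d}$ on $C$ and closedness of the halfspace. The bookkeeping is merely rearranged, and your extra remark that $w^{v,d}\neq 0$ follows from $\trans{(w^{v,d})}d=-1$ is a harmless (and welcome) addition.
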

	\begin{proof}
		\rev{It follows from strong duality between \eqref{P2(v)} and \eqref{D2(v)} together with \cite[Proposition 4.7]{LRU14}.} 
		%\out{	By strong duality between \eqref{P2(v)} and \eqref{D2(v)} we have for all $\bar{x}\in\mathcal{X}$ that
			%	\begin{align*}
				%	-z^{v,d}
				%	=\inf_{x \in \mathcal{X}}\set{ \trans{(w^{v,d})} \Gamma(x)}- \trans{(w^{v,d})} v
				%	%	&\leq \trans{(u^{v,d})} g(\bar{x})+ \trans{(w^{v,d})} \Gamma(\bar{x})- \trans{(w^{v,d})} v\\
				%	\leq \trans{(w^{v,d})} \Gamma(\bar{x})- \trans{(w^{v,d})} v.
				%	\end{align*} 
			%	Since $(x^{v,d},z^{v,d})$ and $w^{v,d}$ are  feasible for \eqref{P2(v)} and \eqref{D2(v)}, respectively, we also have $w^{v,d}\in C^+$, $\Gamma(x^{v,d})\leq_C v+z^{v,d}d$ and  $\trans{(w^{v,d})} d=-1$. Thus,
			%	\begin{align*}
				%	\trans{(w^{v,d})} (\Gamma(x^{v,d}))
				%	\leq \trans{(w^{v,d})} (v+z^{v,d}d)
				%	\leq \trans{(w^{v,d})} \Gamma(\bar{x})
				%	\end{align*}
			%	and the first part of the assertion follows.
			%	
			%	Now consider $\bar{y}\in \Gamma(\mathcal{X})+C$. Then there exists $\bar{x}\in\mathcal{X}$ such that $\bar{y}\geq_C\Gamma(\bar{x})$, so by the above it holds
			%	\begin{align*}
				%	\trans{(w^{v,d})} \bar{y}
				%	\geq \trans{(w^{v,d})} \Gamma(\bar{x})
				%	{\geq} \trans{(w^{v,d})} \Gamma({x^{v,d}}).
				%	\end{align*}
			%	Thus, $\Gamma (\mathcal{X}) + C$ is contained in the halfspace $\set{y \in \real^q \mid \trans{(w^{v,d})}y \geq \alpha}$. The same follows also for the upper image $\mathcal{P}$. Moreover, $\set{y \in \real^q \mid \trans{(w^{v,d})} y = \alpha}$ is a supporting hyperplane of $\mathcal{P}$ as $\Gamma({x^{v,d}}) \in \mathcal{P} \cap \set{y \in \real^q \mid \trans{(w^{v,d})} y = \alpha}$.}
	\end{proof}
	
	%\begin{proposition}
	%\label{prop:cuttingplane}
	%Let Assumption~\ref{assumption1} be satisfied and one of the following holds
	%\begin{enumerate}
	%\vspace{-0.2cm}\item $v \in \Gamma(\mathcal{X}) + \Int C$, or
	%\vspace{-0.2cm}\item $d = -c \in - \Int C$.
	%\vspace{-0.2cm}
	%\end{enumerate} 
	%Assume that $(x^{v,d},z^{v,d})$ and $w^{v,d}$ are optimal solutions to the problems \eqref{P2(v)} and \eqref{D2(v)}, respectively. Set $\alpha:=(w^{v,d})^T\Gamma(x^{v,d})$. Then $H(w^{v,d},\alpha) :=\set{y \in \real^q \; \vert \; (w^{v,d})^Ty = \alpha}$ 	is a supporting hyperplane of $\Gamma(\mathcal{X})$ and of $\mathcal{P}$  at $\Gamma(x^{v,d})$. \\
	%%\gk{Detto. }
	%\end{proposition}
	%\begin{proof}  Obviously, $\Gamma(x^{v,d})\in H(w^{v,d},\alpha)$ and $H(w^{v,d},\alpha)$ is a hyperplane. Since, by Proposition \ref{prop:1}, $(w^{v,d})^T\Gamma(x)\geq \alpha$ for all $x\in\mathcal{X}$,  $H(w^{v,d},\alpha)$ is a supporting hyperplane of $\Gamma(\mathcal{X})$.
	%
	% %\out{Let $\bar{y}\in\mathcal{P}$.} 
	% \br{Let $\bar{y}\in \Gamma(\mathcal{X})+C$. } Then there exists $\bar{x}\in\mathcal{X}$ such that $\bar{y}\geq_C\Gamma(\bar{x})$ 
	% %\gk{Shouldn't here be some limit argument, in case $\bar{y}$ comes from the closure?} 
	% and by  \br{$w^{v,d}\in C^+$} and Proposition \ref{prop:1},
	%\begin{align*}
	%	(w^{v,d})^T\bar{y}
	%	\geq (w^{v,d})^T\Gamma(\bar{x})
	%	{\geq}(w^{v,d})^T\Gamma({x^{v,d}}).
	%\end{align*}
	%Thus, $H(w^{v,d},\alpha)$ is a supporting hyperplane of $\mathcal{P}$.
	%\end{proof}
	
	\begin{proposition}
		\label{prop:P2unbounded}
		Let Assumption~\ref{assumption1}\rev{(a)-(c)} be satisfied and let $v \in \Gamma(\mathcal{X}) + \Int C$. 
		Problem~\eqref{P2(v)} is unbounded if and only if $d \in \mathcal{P}_{\infty}$. 
	\end{proposition}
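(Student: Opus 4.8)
The plan is to prove the equivalence by analyzing the feasible region of~\eqref{P2(v)} and its recession directions. Since $v \in \Gamma(\mathcal{X}) + \Int C$, by Proposition~\ref{prop:strongduality} strong duality holds for~\eqref{P2(v)} and~\eqref{D2(v)}, and in particular~\eqref{P2(v)} is feasible (the Slater point constructed there is feasible). So the only question is whether the supremum is finite. The key observation is that, up to closure, the feasible $z$-values of~\eqref{P2(v)} are exactly the $z$ for which $v + zd \in \cl(\Gamma(\mathcal{X}) + C) = \mathcal{P}$: indeed $\Gamma(x) - v - zd \leq_C 0$ for some $x\in\mathcal{X}$ means $v + zd \in \Gamma(x) + C \subseteq \Gamma(\mathcal{X}) + C$, and conversely membership in $\Gamma(\mathcal{X})+C$ gives a feasible $x$, while points of $\mathcal{P} \setminus (\Gamma(\mathcal{X})+C)$ are limits of such points. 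So~\eqref{P2(v)} is unbounded if and only if the ray $\{v + zd : z \geq 0\}$ (or its relevant tail) stays in $\mathcal{P}$ for arbitrarily large $z$.

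First I would prove the ``if'' direction: suppose $d \in \mathcal{P}_\infty$. Since $v \in \Gamma(\mathcal{X}) + \Int C \subseteq \mathcal{P}$, the definition of the recession cone gives $v + zd \in \mathcal{P}$ for all $z \geq 0$. I then need to convert membership in $\mathcal{P} = \cl(\Gamma(\mathcal{X})+C)$ into genuine feasibility of~\eqref{P2(v)} with $z$ arbitrarily large. Here I would use that $v \in \Gamma(\mathcal{X}) + \Int C$, say $v = \Gamma(\bar x) + \bar c$ with $\bar c \in \Int C$: for any $z \geq 0$ we then have $v + zd = \Gamma(\bar x) + \bar c + z d$. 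Since $d \in \mathcal{P}_\infty$ and $\Gamma(\bar x) \in \mathcal{P}$, the point $\Gamma(\bar x) + zd \in \mathcal{P}$; combining a nearby point of $\Gamma(\mathcal{X})+C$ with the interior direction $\bar c$ (a standard interior-point argument: $\mathcal{P} = \cl(\Gamma(\mathcal{X})+C)$ and adding an interior element of $C$ pushes a closure point back into $\Gamma(\mathcal{X})+C$) yields $x_z \in \mathcal{X}$ with $\Gamma(x_z) \leq_C v + zd$. Hence $(x_z, z)$ is feasible for~\eqref{P2(v)} for every $z \geq 0$, so the supremum is $+\infty$.

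For the ``only if'' direction, suppose~\eqref{P2(v)} is unbounded; I want $d \in \mathcal{P}_\infty$. Take feasible $(x_k, z_k)$ with $z_k \to \infty$, so $v + z_k d \in \Gamma(x_k) + C \subseteq \mathcal{P}$. Since $\mathcal{P}$ is a closed convex set, a direction $d'$ lies in $\mathcal{P}_\infty$ if and only if there is some $y_0 \in \mathcal{P}$ with $y_0 + z d' \in \mathcal{P}$ for all $z \geq 0$, equivalently if and only if there is a sequence $p_k \in \mathcal{P}$ and scalars $t_k \to \infty$ with $p_k / t_k \to d'$. Applying this with $p_k = v + z_k d \in \mathcal{P}$ and $t_k = z_k$, we get $p_k/t_k = v/z_k + d \to d$, and since $v + z_k d \in \mathcal{P}$ with $z_k\to\infty$, the standard characterization of recession directions of a closed convex set gives $d \in \mathcal{P}_\infty$. (More carefully: fix any $\hat y \in \mathcal{P}$; by convexity of $\mathcal{P}$, for each $z \geq 0$ and large enough $k$, the point $\hat y + z d$ lies on the segment-limit of $\hat y$ and $v + z_k d$ scaled appropriately, hence in $\mathcal{P}$ by closedness; therefore $d \in \mathcal{P}_\infty$.)

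The main obstacle I anticipate is the passage between membership in the closure $\mathcal{P} = \cl(\Gamma(\mathcal{X}) + C)$ and exact feasibility of the constraint $\Gamma(x) - v - zd \leq_C 0$ in~\eqref{P2(v)}; one must be careful that the supremum of feasible $z$ matches the supremum of $z$ with $v + zd \in \mathcal{P}$, not merely $v + zd \in \Gamma(\mathcal{X}) + C$. The hypothesis $v \in \Gamma(\mathcal{X}) + \Int C$ is exactly what makes this work: it lets us absorb a small multiple of an interior direction to move from the closure into the set itself without decreasing $z$ (or decreasing it negligibly), and it also guarantees that $v \in \mathcal{P}$ so the recession-cone definition applies directly. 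The rest is routine convex-analysis bookkeeping about recession cones of closed convex sets.
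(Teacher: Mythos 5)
Your proof is correct and follows essentially the same route as the paper: both directions hinge on identifying feasibility of $z$ with membership of $v+zd$ in $\Gamma(\mathcal{X})+C$, using the hypothesis $v \in \Gamma(\mathcal{X}) + \Int C$ to pass from the closure $\mathcal{P}$ back to the set itself (the paper phrases this as $v+zd \in \Int\mathcal{P}$, you absorb the interior direction $\bar c$ — the same idea), and invoking the standard recession-cone characterization for closed convex sets (Rockafellar, Theorem~8.3) for the converse. The only cosmetic difference is that the paper notes directly that the set of feasible $z$ is an interval containing $[0,\infty)$ rather than working with a sequence $z_k \to \infty$; both are fine.
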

	\begin{proof}
		If $d \in \mathcal{P}_{\infty}$, then $v + z d \in 
		\Int \mathcal{P}$ for all $z \geq 0$ as $v \in \Int \mathcal{P}$. Hence \eqref{P2(v)} is unbounded. On the other hand, if \eqref{P2(v)} is unbounded, then for all $z \geq 0$ there exists $x_z \in \mathcal{X}$ such that $\Gamma (x_z) \leq_C v + z d$. This implies $v + z d \in \Gamma (x_z) + C \subseteq \mathcal{P}$ for all $z \geq 0$. Since $\mathcal{P}$ is a closed convex set and $v \in \mathcal{P}$, this implies that $d$ is a recession direction of $\mathcal{P}$ (see Theorem 8.3 from \cite{Rockafellar72}).
	\end{proof}

	\begin{proposition}
		\label{prop:P2unbounded2}
		Let Assumption~\ref{assumption1}\rev{(a)-(c)} be satisfied, let $v \not\in \mathcal{P}$ and $d \in - \Int C$. 
		Then, problem~\eqref{P2(v)} is bounded.  
	\end{proposition}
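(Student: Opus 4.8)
The plan is to argue by contradiction. Suppose \eqref{P2(v)} is unbounded, i.e.\ the supremum of $z$ over the feasible set is $+\infty$; then in particular there is a feasible pair $(\bar x,\bar z)$ with $\bar z>0$. I would first unwind the constraint of \eqref{P2(v)}: the relation $\Gamma(\bar x)-v-\bar z d\leq_C 0$ means, by definition of $\leq_C$, that $v+\bar z d\in\Gamma(\bar x)+C\subseteq\Gamma(\mathcal{X})+C\subseteq\mathcal{P}$. So unboundedness hands us a point of $\mathcal{P}$ of the form $v+\bar z d$ with $\bar z>0$.

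The key step is then to cancel the direction $d$. Since $d\in-\Int C$ we have $-d\in\Int C\subseteq C$, and because $C$ is a cone and $\bar z>0$, $-\bar z d=\bar z(-d)\in C$. Writing $v=(v+\bar z d)+(-\bar z d)$ therefore exhibits $v$ as the sum of a point of $\mathcal{P}$ and a point of $C$. Finally I would use $\mathcal{P}+C\subseteq\mathcal{P}$, which holds because $C\subseteq\mathcal{P}_\infty$ (equivalently: from $\mathcal{P}=\cl(\Gamma(\mathcal{X})+C)$ and $C+C\subseteq C$ one gets $\cl(\Gamma(\mathcal{X})+C)+C\subseteq\cl(\Gamma(\mathcal{X})+C+C)\subseteq\mathcal{P}$). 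Hence $v\in\mathcal{P}$, contradicting the hypothesis $v\notin\mathcal{P}$, and so \eqref{P2(v)} must be bounded.

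I do not expect a genuinely hard step: the whole argument rests on the elementary observation that $-d\in C$ lets one step from $v+\bar z d$ back into $\mathcal{P}$, so no separation argument and no appeal to strong duality (Proposition~\ref{prop:strongduality}) is needed, in contrast to the characterisation in Proposition~\ref{prop:P2unbounded}. The only point deserving a word of care is the degenerate case in which \eqref{P2(v)} is infeasible; then its value is $-\infty$ by convention, which already counts as bounded, so it suffices to rule out the value $+\infty$, which is exactly what the above does.
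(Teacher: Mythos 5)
Your proposal is correct and follows essentially the same route as the paper: both rule out the feasibility of any $z>0$ by writing $v=(v+zd)-zd$ and using $-zd\in C$ together with $\Gamma(x)+C+C\subseteq\mathcal{P}$ to contradict $v\notin\mathcal{P}$. The only cosmetic difference is that the paper notes feasibility via the Slater-point argument from Proposition~\ref{prop:strongduality} and states the slightly sharper conclusion that $0$ is an upper bound on the optimal value, whereas you dispose of the infeasible case by convention; both are fine.
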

	\begin{proof}
		First, note that by the argument in the proof of Proposition~\ref{prop:strongduality} the problem is feasible. Since $v \not\in \mathcal{P}$, the value $z = 0$ is not feasible for the problem. Assume that some $z > 0$ is feasible for the problem, i.e. for some $x \in \mathcal{X}$ it holds $ v + z d \in \Gamma(x) + C$. This would, however, yield a contradiction as $v = (v + zd) - zd \in \Gamma(x) + C + C \subseteq \mathcal{P}$. Therefore, zero is an upper bound on the optimal value of~\eqref{P2(v)}.
	\end{proof}

	\section{Determining an outer approximation}
	\label{sec_out_appr}
	The algorithm proposed in~\cite{LRU14} provides a finite $\varepsilon$--solution for a bounded convex vector optimization problem. It consists of two phases: First, an initial outer approximation $\mathcal{P}_0$ of $\mathcal{P}$ is found via~\cite[Equation 10]{LRU14}. Second, a sequence of outer approximations $\mathcal{P}_0 \supseteq \mathcal{P}_1 \supseteq \mathcal{P}_2\supseteq...\supseteq \mathcal{P}$ \rev{of the upper image} is computed until after finitely many steps an $\varepsilon$--approximation is obtained.
	
	Remark~\ref{remark_definition} suggests a strategy for dealing with unbounded problems: If a $\delta$--outer approximation of $\mathcal{P}_\infty$ can be found, then the algorithm of~\cite{LRU14} can be applied to the bounded problem~\eqref{eq:P'} to compute an $(\varepsilon,\delta)$--solution of~\eqref{eq:P}. 
	
	In this section, we formulate an algorithm which computes a $\delta$--outer approximation of $\mathcal{P}_\infty$ as well as an initial outer approximation $\mathcal{P}_0$ of $\mathcal{P}$ if the problem is feasible. In fact, the $\delta$--outer approximation of $\mathcal{P}_\infty$ will be the recession cone of the initial outer approximation $\mathcal{P}_0$ of $\mathcal{P}$. This means, in the proposed generalization of the initial phase an initial outer approximation $\mathcal{P}_0$ of $\mathcal{P}$ is computed whose recession cone is in $\delta$--distance to the recession cone of the upper image  $\mathcal{P}$ in the sense of ~\eqref{eq_def_1}. This is in contrast to the bounded case, where the initial outer approximation $\mathcal{P}_0$ of $\mathcal{P}$ has the same recession cone as the upper image $\mathcal{P}$.
	
	Thus, by replacing the original initial phase of~\cite[Algorithm 1]{LRU14} by this new procedure (Algorithm~\ref{alg_new} below), we are able to generalize~\cite[Algorithm 1 and Algorithm 2]{LRU14} to unbounded problems in Sections~\ref{sec_genBenson} and~\ref{sec_dual}.

	%\out{
		%For purposes of computations we add a polyhedral ordering cone to our assumptions.
		%\begin{assumption} \label{assumption2}
		%	In addition to Assumption~\ref{assumption1}, let the ordering cone $C \subseteq \mathbb R^q$ be polyhedral. 
		%\end{assumption}
		%}
	
	\rev{Recall that for computational purposes we are assuming that the ordering cone is polyhedral.}
	For a polyhedral ordering cone $C$ there exists a finite collection of generating directions, which will be denoted by $R$. Similarly, there exists a finite set of generating directions of the dual cone $C^+$, denoted by $R^*$ in the following. Note that if the generating directions $R$ are given, the set $R^*$ can be computed, and vice versa. So only one of them needs to be specified. Without loss of generality we assume that all elements of $R$ and $R^*$ are normalized such that $\norm{d} = 1$ for $d\in R$ and $\trans{c}w = 1$ for $w \in R^*$.
	
	Let us now describe the proposed algorithm, which is formalized as Algorithm~\ref{alg_new} below. Validity of the procedure is proven in Theorem~\ref{thm_alg_1} below.
	First, to verify the feasibility of~\eqref{eq:P} we solve the scalar problem \rev{$\text{minimize~} 0 \quad \text{subject to } x \in \mathcal{X}$.} 
	%\fu{Alternative: we compute $\inf_{x\in\mathcal{X}} 0$ by solving the corresponding problem.}
	%\begin{align*}
	%\text{minimize~} 0 \quad \text{subject to } x \in \mathcal{X}.
	%\end{align*}
	If the problem is feasible, we start with $\mathcal{P}_0 = \mathbb{R}^q$ as an outer approximation of $\mathcal{P}$. In the course of Algorithm~\ref{alg_new} this outer approximation will be refined until it satisfies the desired property that its recession cone is in $\delta$--distance to the recession cone of the upper image $\mathcal{P}$ in the sense of~\eqref{eq_def_1}.
	%If the problem is feasible, we start with $\mathcal{P}_0 = \mathbb{R}^q$ as an outer approximation of $\mathcal{P}$, which in the course of Algorithm~\ref{alg_new} will be refined to obtain at the end of the generalized initial phase an outer approximation $\mathcal{P}_0$ of $\mathcal{P}$ with the desired property that its recession cone is in $\delta$--distance to the recession cone of the upper image  $\mathcal{P}$ in the sense of~\eqref{eq_def_1}.
	
	Next, we check whether problem~\eqref{eq:P} is bounded using Remark~\ref{rem:extdirC+}: We solve problem~\eqref{P1(w)} %(P$_1(z)$) 
	for each $w \in R^*$. If all of these problems are bounded, then~\eqref{eq:P} is a bounded problem. Otherwise~\eqref{eq:P} is unbounded.  An optimal solution $x^*$ of a bounded problem~\eqref{P1(w)} %(P$_1(z)$)
	is a weak minimizer of~\eqref{eq:P} and as such is stored in a set $\bar{{\X}}$. The optimal solution $x^*$ and the weight $w$ also define a supporting hyperplane of $\mathcal{P}$, which is used to update the outer approximation $\mathcal{P}_0$. 
	
	If~\eqref{eq:P} is found to be bounded, the algorithm can terminate, as $C$ (or its generating directions $R$) provide an outer approximation of $\mathcal{P}_{\infty}$ and thus the computation of the initial outer approximation $\mathcal{P}_0$ of $\mathcal{P}$ coincides with the initial outer approximation \rev{of the upper image} in~\cite[Equation 10]{LRU14}. Otherwise, an outer approximation of $\mathcal{P}_{\infty}$ needs to be iteratively computed through refining the outer approximation $\mathcal{P}_0$ of $\mathcal{P}$ step by step. We keep track of all inner directions (elements of $\mathcal{P}_{\infty}$) found in a set $\mathcal{Y}_{In}$, initialized with the generators $R$ of the ordering cone $C$.
	
	In each iteration, the recession cone $(\mathcal{P}_0)_{\infty}$ of the current outer approximation $\mathcal{P}_0$ is computed. This provides an outer approximation of the recession cone of the upper image in the sense of $\mathcal{P}_{\infty} \subseteq (\mathcal{P}_0)_{\infty}$. If the distance (within the unit ball as given in~\eqref{eq_def_1}) is at most $\delta$, the algorithm can terminate. Otherwise, we need to improve the approximation $\mathcal{P}_{0}$. To verify if condition~\eqref{eq_def_1} holds, we specifically compute distances between the elements of the set $\mathcal{Y}_{Out} := \vertice \left( (\mathcal{P}_0)_{\infty} \cap B_1 (0) \right) \setminus \{0\}$ and the known inner directions $\mathcal{Y}_{In}$. If~\eqref{eq_def_1} is not satisfied, we consider a convex combination of an outer and an inner direction, which either updates the set of known inner directions or provides a supporting hyperplane of $\mathcal{P}$ to update $\mathcal{P}_0$.
	
	A precise formulation of this algorithm is provided below. It contains a matrix $T$ and a set $\bar{\Tset}$, both of which will be discussed in Section \ref{sec_dual} as part of the dual algorithm. For a primal version of the algorithm these can be ignored and thus are colored in gray.

	\rev{\begin{remark}\label{rem:lineality} The following modifications can be made to Algorithm~\ref{alg_new} without affecting the subsequent results.
		\begin{enumerate}
			\item As an optional step, it is possible to check for lineality directions before starting the iterations of Algorithm~\ref{alg_new}. The motivation is to fasten the process of finding the lineality directions of the upper image, if there are any, or to obtain a better initial approximation, otherwise. This can be done by applying the following procedure, right before executing line 16 of the algorithm.\\
			For all $d \in -R$ solve problem~\eqref{P2(v)} and its dual~\eqref{D2(v)} .
			\begin{enumerate}
				\item If~\eqref{P2(v)} is unbounded, then set $\mathcal{Y}_{In}\gets \mathcal{Y}_{In}\cup\set{d}$.
				\item If $(x^{v,d},z^{v,d})$, $w^{v,d}$ are optimal solutions to \eqref{P2(v)}, \eqref{D2(v)}, respectively, then set $\mathcal{\bar{\X}}\gets \mathcal{\bar{\X}}\cup\set{x^{v,d}}$, \g{$\bar{\Tset}\gets\bar{\Tset}\cup\set{\trans{T}\frac{w^{v,d}}{\trans{c} w^{v,d}} }$,} and $$\mathcal{P}_0\gets\mathcal{P}_0\cap\set{y\in\real^q\mid \trans{(w^{v,d})} (y-\Gamma(x^{v,d}))\geq0}.$$
			\end{enumerate}
			\item In each iteration (in line 18) one chooses an outer direction $d$ with which the iteration proceeds. Currently, this direction is chosen arbitrarily. Instead, one could formulate selection rules for the direction. Another alternative would be to check all directions $d \in \mathcal{Y}_{Out} \setminus \left( \mathcal{Y}_{In} \cup \hat{C} \right)$ before updating the outer approximation $\mathcal{P}_0$. We leave these considerations for future work.
			\item In line 22 of Algorithm~\ref{alg_new} one could alternatively choose the direction $\tilde{d} := \frac{\lambda d+(1-\lambda)\tilde{r}}{\norm{\lambda d+(1-\lambda)\tilde{r}}}$ for any $\lambda \in (0,1)$.
		\end{enumerate}
\end{remark}}

{\scriptsize 
			\begin{center}
	\begin{minipage}{\linewidth}
		\begin{algorithm}[H]  
		\SetAlgoLined
		\KwIn{ CVOP satisfying \rev{Assumption~\ref{assumption1}}; generating directions $R$ of $C$ with $\norm{d} = 1$ for $d \in R$; generating directions $R^*$ of $C^+$ with $\trans{c} w = 1$ for $w \in R^*$; tolerance $\delta > 0$. }
Set $\mathcal{Y}_{In}:=R$, $\hat{C}:=\emptyset$, $\mathcal{P}_0:=\real^q$, $\mathcal{\bar{\X}}:=\emptyset$, $bounded := $TRUE, \g{$\bar{\Tset}=\emptyset$}. \;

		\tcp{Check Feasibility:}
		
		\eIf{$\min_{x \in \mathcal{X}} 0 $ is infeasible} {
		Set $\mathcal{Y}_{Out} := \emptyset$\;}
		{Use optimal (feasible) solution $x_0$ to compute $v := \Gamma (x_0) + c$ \;}
		\tcp{Check Boundedness:}
		\For{$w \in R^*$}{  
		\eIf{$P_1(w)$ is unbounded}{ 
$bounded := $FALSE}{ 		
Use optimal solution $x^*$ of $P_1(w)$ to update $\mathcal{\bar{\X}}\gets\mathcal{\bar{\X}}\cup\set{x^*}$, \g{$\bar{\Tset}\gets\bar{\Tset}\cup\set{\trans{T} w}$,} and  $\mathcal{P}_0\gets\mathcal{P}_0\cap\set{y\in\real^q \mid (\trans{w} (y-\Gamma(x^*))\geq0}$. }
		\eIf{$bounded$}{
Set $\mathcal{Y}_{Out}:=R$}{
Compute $\mathcal{Y}_{Out} = \vertice \left( (\mathcal{P}_0)_{\infty} \cap B_1 (0) \right) \setminus \{0\}$.
	\tcp{Iteration:}
		\While{$\mathcal{Y}_{Out} \setminus \left( \mathcal{Y}_{In} \cup \hat{C} \right) \neq \emptyset$}{
Take $d \in \mathcal{Y}_{Out} \setminus \left( \mathcal{Y}_{In} \cup \hat{C} \right)$  and find $\tilde{r} := \text{argmin} \left\lbrace \norm{ d-r} \; \big| \; r\in \mathcal{Y}_{In} \right\rbrace$.\;

		\eIf{$\norm{ d-\tilde{r}} \leq \delta$}{
Update $\hat{C}\gets\hat{C} \cup \set{d}$.}{
\rev{Set $\tilde{d} := \frac{d+\tilde{r}}{\norm{{d+\tilde{r}}}}$.} 
		\eIf{$P_2 (v, \tilde{d})$ is unbounded }{
Update $\mathcal{Y}_{In} \gets \mathcal{Y}_{In}\cup\set{\tilde{d}}$. }{
Use optimal solution $(x^{v,\tilde{d}},z^{v,\tilde{d}})$ and $w^{v,\tilde{d}}$ of $P_2(v,\tilde{d})$ and $D_2(v,\tilde{d})$ to update $\mathcal{\bar{\X}}\gets \mathcal{\bar{\X}}\cup\set{x^{v,\tilde{d}}}$, \g{$\bar{\Tset}\gets\bar{\Tset}\cup\set{\trans{T}\frac{w^{v,\tilde{d}}}{\trans{c} w^{v,\tilde{d}}} }$,} and \\
		$\mathcal{P}_0 \gets \mathcal{P}_0 \cap \set{y\in\real^q \mid \trans{(w^{v,\tilde{d}})} (y-\Gamma(x^{v,\tilde{d}}))\geq0}$.
		
%		\STATE 
Recompute $\mathcal{Y}_{Out} \gets \vertice \left( (\mathcal{P}_0)_{\infty} \cap B_1 (0) \right) \setminus \{0\}$.}
}
}
}
}
\KwOut{	Initial outer approximation $\mathcal{P}_0$ of the upper image of CVOP; set of weak minimizers $\bar{\mathcal{X}}$ and \g{set of maximizers $\bar{\mathcal{T}}$}; sets $\mathcal{Y}_{In}$, $\mathcal{Y}_{Out}$ satisfying $\cone \mathcal{Y}_{In} \subseteq \mathcal{P}_{\infty} \subseteq \cone \mathcal{Y}_{Out}$. }
	\caption{\label{alg_new}\rev{Determining a finite $\delta$--outer approximation of $\mathcal{P}_{\infty}$ and  an initial outer approximation $\mathcal{P}_0$ of the upper image $\mathcal{P}$}} 
\end{algorithm}
\end{minipage}
\end{center}
}
	
	The following observations are immediate.
	\begin{lemma} \label{lemma_alg_prop} 
		Let Assumption~\ref{assumption1} be satisfied.
		\begin{enumerate}
			\item The problem~\eqref{eq:P} is infeasible if and only if Algorithm~\ref{alg_new} \rev{never executes lines 5-31.} 
			\item The problem~\eqref{eq:P} is bounded if and only if Algorithm~\ref{alg_new} \rev{never executes lines 16-30.} 
			\item The set $\mathcal{P}_0$ is (in each step of Algorithm~\ref{alg_new}) a convex polyhedron that is an outer approximation of the upper image $\mathcal{P}$, i.e. it satisfies $\mathcal{P} \subseteq \mathcal{P}_0$.
			\item The set $\mathcal{Y}_{In}$ contains (in each step of Algorithm~\ref{alg_new}) only recession directions of the upper image, i.e. it satisfies $\mathcal{Y}_{In} \subseteq \mathcal{P}_{\infty}$.
		\end{enumerate}
	\end{lemma}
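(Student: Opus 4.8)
The plan is to verify each of the four claims by tracking how the relevant objects are modified during the run of Algorithm~\ref{alg_new}, using the scalarization results of Section~\ref{sec_scalar}.

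\emph{Claim 1.} This is essentially by construction. Step~2 solves $\min_{x\in\mathcal X}0$, which is feasible precisely when $\mathcal X\neq\emptyset$, i.e.\ when~\eqref{eq:P} is feasible. The algorithm STOPs in Step~2 exactly in the infeasible case, and in the feasible case it proceeds past Step~2 (and, as one checks, never returns to it). Hence termination in Step~2 $\iff$ infeasibility.

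\emph{Claim 2.} Here I would invoke Remark~\ref{rem:extdirC+}: \eqref{eq:P} is bounded iff (P$_1(z)$) is bounded for every extreme direction $z$ of $C^+$, equivalently for every $z\in R^*$ (these generate $C^+$, and boundedness of (P$_1(\cdot)$) is preserved under taking convex combinations of weights by convexity of $W$ in Lemma~\ref{lemma_weights}). In Step~3 the algorithm sets $b=0$ as soon as some (P$_1(z)$), $z\in R^*$, is unbounded, and leaves $b=1$ otherwise; Step~4 STOPs iff $b=1$. So termination in Step~4 $\iff$ all (P$_1(z)$), $z\in R^*$, bounded $\iff$ \eqref{eq:P} bounded. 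One should also note that if the algorithm does not stop in Step~4 it continues into Steps~5--7 and (by the other claims / the termination analysis done later in Theorem~\ref{thm_alg_1}) does not stop in Step~4, so the ``if and only if'' is genuine.

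\emph{Claims 3 and 4.} Both are invariants proved by induction over the steps in which $\mathcal P_0$ resp.\ $\mathcal Y_{In}$ is updated. For Claim~3: initially $\mathcal P_0=\mathbb R^q\supseteq\mathcal P$. Every update intersects $\mathcal P_0$ with a halfspace of the form $\{y:\trans{w}(y-\Gamma(x^*))\geq 0\}$, where the pair $(x^*,w)$ arises as an optimal primal/dual solution of a weighted-sum or Pascoletti--Serafini scalarization. In the weighted-sum case $w=z\in R^*\subseteq C^+$ and optimality of $x^*$ for (P$_1(z)$) gives $\trans z\Gamma(x^*)\le\trans z\Gamma(x)$ for all $x\in\mathcal X$, hence (adding $C$ and taking closure) $\trans z y\ge\trans z\Gamma(x^*)$ for all $y\in\mathcal P$. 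In the Pascoletti--Serafini case the same conclusion is exactly the content of Proposition~\ref{prop:1}: the hyperplane $\{y:\trans{(w^{v,d})}y=\alpha\}$ supports $\mathcal P$ with $\mathcal P$ on the $\ge$ side. In either case $\mathcal P$ is contained in the added halfspace, so the inclusion $\mathcal P\subseteq\mathcal P_0$ is preserved; each $\mathcal P_0$ is a finite intersection of halfspaces, hence a convex polyhedron. For Claim~4: $\mathcal Y_{In}$ is initialized to $R$, whose cone is $C\subseteq\mathcal P_\infty$ since $\mathcal P=\cl(\Gamma(\mathcal X)+C)$ always has $C$ in its recession cone. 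Thereafter $\mathcal Y_{In}$ only gains a (normalized) direction $d$ when the corresponding Pascoletti--Serafini problem (P$_2(v,d)$) is unbounded; since $v=\Gamma(x_0)+\sum_i d^i\in\Gamma(\mathcal X)+\Int C$, Proposition~\ref{prop:P2unbounded} tells us unboundedness of (P$_2(v,d)$) is equivalent to $d\in\mathcal P_\infty$. So every element added to $\mathcal Y_{In}$ lies in $\mathcal P_\infty$, and the invariant $\mathcal Y_{In}\subseteq\mathcal P_\infty$ holds throughout.

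The main obstacle is mostly bookkeeping rather than mathematics: one must carefully confirm that $v$ really satisfies $v\in\Gamma(\mathcal X)+\Int C$ (so that Propositions~\ref{prop:strongduality}, \ref{prop:1}, \ref{prop:P2unbounded} apply to every call of the Pascoletti--Serafini scalarization in Steps~5 and~7), and that the two ``if and only if'' statements in Claims~1--2 are not vacuously true, i.e.\ that the feasible-and-bounded branch does not also stop in Step~2 and the feasible branch genuinely reaches Step~4. Everything else follows directly from the cited propositions.
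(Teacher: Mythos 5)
Your proposal is correct and follows essentially the same route as the paper's proof: part 1 by construction, part 2 via Proposition~\ref{prop: Pbounded} and Lemma~\ref{lemma_weights}, part 3 by induction using the supporting-hyperplane property (Proposition~\ref{prop:1}), and part 4 via Proposition~\ref{prop:P2unbounded}. You are in fact slightly more careful than the paper in treating the weighted-sum halfspaces of Step~3 separately from the Pascoletti--Serafini ones and in checking that $v\in\Gamma(\mathcal X)+\Int C$, but this does not change the argument.
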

	\begin{proof}
		\begin{enumerate}
			\item Trivial, as the problem $\min_{x \in \mathcal{X}} 0$ shares the feasible set of~\eqref{eq:P}.
			
			\item This follows from Proposition~\ref{prop: Pbounded} and Lemma~\ref{lemma_weights}.
			
			\item The set $\mathcal{P}_0$ is initialized as the whole space, $\mathcal{P}_0:=\real^q$. Whenever the set $\mathcal{P}_0$ is updated, it is through an intersection with a halfspace generated by a supporting hyperplane of $\mathcal{P}$, see Proposition~\ref{prop:1}. Since $\mathcal{P}_0$ is an intersection of finitely many halfspaces, it is a convex polyhedron.
			
			\item The set $\mathcal{Y}_{In}$ is initialized with generating directions of the ordering cone, which are contained in $\mathcal{P}_{\infty}$. A direction $d$ is added to the set $\mathcal{Y}_{In}$ only when problem~\eqref{P2(v)} is unbounded, according to Proposition~\ref{prop:P2unbounded} such directions are recession directions of the upper image.
		\end{enumerate}
	\end{proof}
	
	%The following theorem shows that the Algorithm~\ref{alg_new} can be used to obtain a finite $(\delta, \varepsilon)$--solution of~\eqref{eq:P}, when we combine it with an algorithm for finding a finite $\varepsilon$--solution of a bounded problem~\eqref{eq:P'}, as outlined in Remark~\ref{remark_definition}.
	The following theorem shows that Algorithm~\ref{alg_new} provides a finite $\delta$--outer approximation of the recession cone $\mathcal{P}_{\infty}$, as well as that the outer approximation $\cone \mathcal{Y}_{Out}$ of the recession cone leads to a bounded problem~\eqref{eq:P'}. 
	\begin{theorem}
		\label{thm_alg_1}
		Assume that Algorithm~\ref{alg_new} terminated after finitely many steps for a feasible problem~\eqref{eq:P} satisfying Assumption~\ref{assumption1}.
		Then, the set $\mathcal{Y}_{Out}$ outputted by Algorithm~\ref{alg_new} is a finite $\delta$--outer approximation of the recession cone $\mathcal{P}_{\infty}$ and the problem~\eqref{eq:P'} is bounded.  The set $\mathcal{Y}_{In}$ outputted by Algorithm~\ref{alg_new} is a finite $\delta$--inner approximation of the recession cone $\mathcal{P}_{\infty}$.
	\end{theorem}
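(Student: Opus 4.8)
The plan is to verify the three claims in turn, each of which tracks a loop invariant of Algorithm~\ref{alg_new}. First, \emph{$\cone \mathcal{Y}_{Out}$ is an outer approximation of $\mathcal{P}_\infty$}: since $\mathcal{P} \subseteq \mathcal{P}_0$ holds in every step (Lemma~\ref{lemma_alg_prop}(3)) and both sets are closed convex, passing to recession cones gives $\mathcal{P}_\infty \subseteq (\mathcal{P}_0)_\infty$; because $\mathcal{P}_0$ is a polyhedron, $(\mathcal{P}_0)_\infty$ is a polyhedral cone and equals $\cone \bigl( \vertice ((\mathcal{P}_0)_\infty \cap B_1(0)) \bigr)$, which is precisely $\cone \mathcal{Y}_{Out}$ (the removed origin contributes nothing). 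The same argument applied at the termination step gives $\mathcal{P}_\infty \subseteq \cone \mathcal{Y}_{Out}$. Dually, $\cone \mathcal{Y}_{In} \subseteq \mathcal{P}_\infty$ is exactly Lemma~\ref{lemma_alg_prop}(4). This already yields the nesting $\cone \mathcal{Y}_{In} \subseteq \mathcal{P}_\infty \subseteq \cone \mathcal{Y}_{Out}$.

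Second, \emph{the $\delta$--distance condition}~\eqref{eq_def_1} for $\mathcal{Y}_{Out}$. Here I would use the termination criterion of the while-loop in Step~7: when the algorithm stops, $\mathcal{Y}_{Out} \setminus (\mathcal{Y}_{In} \cup \hat{C}) = \emptyset$, i.e.\ every vertex direction $d \in \mathcal{Y}_{Out}$ lies in $\mathcal{Y}_{In}$ or in $\hat{C}$; and a direction is only ever placed in $\hat{C}$ in Step~7(a) when its nearest inner direction satisfies $\norm{d - \tilde r} \le \delta$. So every $d \in \mathcal{Y}_{Out}$ has an inner direction within $\ell_1$--distance $\delta$. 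Since both $\mathcal{P}_\infty \cap B_1(0)$ and $\cone\mathcal{Y}_{Out} \cap B_1(0)$ are compact convex sets with the former contained in the latter, one direction in the Hausdorff distance is zero; for the other direction, $\sup_{a \in \cone\mathcal{Y}_{Out}\cap B_1(0)} \inf_{b \in \mathcal{P}_\infty\cap B_1(0)} \norm{a-b}$, I would reduce to the extreme points $\mathcal{Y}_{Out}$ of the polytope $\cone\mathcal{Y}_{Out}\cap B_1(0)$ by convexity of the distance-to-a-convex-set function, note $\mathcal{Y}_{In} \subseteq \mathcal{P}_\infty \cap B_1(0)$ (the elements of $\mathcal{Y}_{In}$ are unit vectors), and conclude the sup is $\le \delta$. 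The proof that $\mathcal{Y}_{In}$ is a $\delta$--inner approximation is symmetric but needs a small extra argument: one must check that the $\inf$ over the (possibly larger) set $\mathcal{P}_\infty \cap B_1(0)$ of the distance to $\cone\mathcal{Y}_{In}\cap B_1(0)$ is still $\le \delta$; this follows because any unit recession direction $p \in \mathcal{P}_\infty$ either is exposed by the outer polyhedron's facet structure near some $d \in \mathcal{Y}_{Out}$, reducing back to $\norm{d - \tilde r} \le \delta$, or lies already in $\cone \mathcal{Y}_{In}$.

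Third, \emph{boundedness of~\eqref{eq:P'}} with $\mathcal{Y} = \mathcal{Y}_{Out}$. By Proposition~\ref{prop: Pbounded} (applied with ordering cone $\cone\mathcal{Y}_{Out}$ in place of $C$) and Remark~\ref{rem:extdirC+}, it suffices to show that $(\mathrm{P}_1(z))$ is bounded for each extreme direction $z$ of $(\cone\mathcal{Y}_{Out})^+$. Now $(\cone\mathcal{Y}_{Out})^+ = ((\mathcal{P}_0)_\infty)^+$, and since $\mathcal{P}_0$ is an intersection of halfspaces $\{y : (w^i)^\T y \ge \gamma_i\}$ arising from the supporting hyperplanes generated in Steps~3, 5, and~7 (Proposition~\ref{prop:1}), the recession cone $(\mathcal{P}_0)_\infty = \bigcap_i \{y : (w^i)^\T y \ge 0\}$ has dual cone $\cone\{w^i\}$; every extreme direction of this dual cone is (a positive multiple of) one of the weight vectors $w^i$ actually used by the algorithm. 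For each such $w^i$, the corresponding scalar problem was solved to optimality during the run — either as $(\mathrm{P}_1(z))$ in Step~3, or via strong duality for $(\mathrm{P}_2(v,d))$ / $(\mathrm{D}_2(v,d))$ in Steps~5 and~7, where Proposition~\ref{prop:1} guarantees $x^{v,d}$ is optimal for $(\mathrm{P}_1(w^{v,d}))$ — hence $(\mathrm{P}_1(w^i))$ is bounded. Therefore $(\mathrm{P}_1(z))$ is bounded for every extreme direction $z$ of $(\cone\mathcal{Y}_{Out})^+$, and~\eqref{eq:P'} is bounded.

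\textbf{Main obstacle.} The delicate point is the $\delta$--inner approximation claim (second part): the termination test in Step~7 only compares the \emph{vertices} $\mathcal{Y}_{Out}$ of the outer cone against $\mathcal{Y}_{In}$, so one gets control of $\sup_{d \in \mathcal{Y}_{Out}} \mathrm{dist}(d, \mathcal{P}_\infty)$ more or less for free, but bounding $\sup_{p \in \mathcal{P}_\infty \cap B_1(0)} \mathrm{dist}(p, \cone\mathcal{Y}_{In} \cap B_1(0))$ requires relating an arbitrary recession direction $p$ of $\mathcal{P}$ back to the finitely many tested directions. I expect one must invoke the sandwich $\cone\mathcal{Y}_{In} \subseteq \mathcal{P}_\infty \subseteq \cone\mathcal{Y}_{Out}$ together with a geometric lemma: for nested polyhedral/convex cones, the Hausdorff distance on the unit ball is attained, up to the extreme-direction reduction, between extreme directions of the outer cone and their nearest points in the inner cone — precisely the quantity Step~7 controls. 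Making that reduction rigorous (in particular handling the case where $\mathcal{P}_\infty$ is not polyhedral, so that "extreme directions of $\mathcal{P}_\infty$" is the wrong notion) is where the real work lies; everything else is bookkeeping over the algorithm's steps.
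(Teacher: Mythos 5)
Your first step (the containments $\cone \mathcal{Y}_{In} \subseteq \mathcal{P}_\infty \subseteq (\mathcal{P}_0)_\infty = \cone\mathcal{Y}_{Out}$) matches the paper, and your vertex reduction gives the outer $\delta$--bound correctly. But the inner-approximation claim --- which you yourself flag as the main obstacle --- is left genuinely open, and the completion you sketch (relating an arbitrary recession direction of $\mathcal{P}$ to the facet structure of the outer polyhedron, or to extreme directions of $\mathcal{P}_\infty$) is the wrong tool, precisely because $\mathcal{P}_\infty$ need not be polyhedral. The paper closes the gap with an observation you come close to but never state: it first proves
\[
\text{d}_H\left( \cone \mathcal{Y}_{In} \cap B_1(0),\ \cone\mathcal{Y}_{Out}\cap B_1(0) \right) \leq \delta
\]
directly --- every nonzero $d\in\cone\mathcal{Y}_{Out}\cap B_1(0)$ is a combination $\sum_i \alpha^{(i)} d^{(i)}$ of the normalized vertices $d^{(i)}\in\mathcal{Y}_{Out}$ with $\alpha^{(i)}>0$, $\sum_i\alpha^{(i)}\le 1$; replacing each $d^{(i)}$ by its nearby $\tilde r^{(i)}\in\mathcal{Y}_{In}$ and using convexity of the norm produces a point of $\cone\mathcal{Y}_{In}\cap B_1(0)$ within $\delta$ of $d$. (This is your vertex reduction, just aimed at $\cone\mathcal{Y}_{In}$ rather than at $\mathcal{P}_\infty$.) Then, since for nested nonempty sets $A\subseteq B\subseteq C$ one has both $\text{d}_H(A,B)\le \text{d}_H(A,C)$ and $\text{d}_H(B,C)\le \text{d}_H(A,C)$, the sandwich $\cone\mathcal{Y}_{In}\cap B_1(0) \subseteq \mathcal{P}_\infty\cap B_1(0) \subseteq \cone\mathcal{Y}_{Out}\cap B_1(0)$ delivers the $\delta$--bound for the inner and the outer approximation simultaneously, using nothing about $\mathcal{P}_\infty$ beyond the inclusions. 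The ``real work'' you anticipate is in fact this one line; as written, your proof of the inner bound does not exist.

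For the boundedness of~\eqref{eq:P'} you take a genuinely different route from the paper: you argue dually that $(\cone\mathcal{Y}_{Out})^+=\cone\{w^i\}$, that every extreme direction of this cone is one of the weights $w^i$ for which the algorithm certified boundedness of (P$_1(w^i)$) (via Proposition~\ref{prop:1} in Steps 5 and 7), and then invoke Proposition~\ref{prop: Pbounded} and Remark~\ref{rem:extdirC+} for the modified problem. This is a legitimate argument, but it silently assumes those statements apply with $\cone\mathcal{Y}_{Out}$ in place of $C$ (in particular that $\cone\mathcal{Y}_{Out}$ is pointed and that the cited boundedness characterization carries over), which you do not check. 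The paper instead proves the more elementary fact that $\mathcal{P}_0$ is self-bounded: writing $\mathcal{P}_0=\bigcap_{i=1}^r\{y\in\real^q \mid \trans{(w^i)} y\ge\gamma_i\}$, the explicit point $p_0=\min_{i}\left\lbrace \gamma_i/(\trans{(w^i)} c)\right\rbrace c$ with $c\in\Int C$ satisfies $\mathcal{P}_0\subseteq\{p_0\}+(\mathcal{P}_0)_\infty$, whence $\mathcal{P}\subseteq\{p_0\}+\cone\mathcal{Y}_{Out}$ and~\eqref{eq:P'} is bounded with no appeal to scalarization theory. Your route buys a connection to the dual machinery used later in the paper; the paper's buys self-containedness.
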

	\begin{proof}
		When the algorithm terminates after finitely many steps, $\mathcal{P}_0$ is a convex polyhedron and $\mathcal{Y}_{Out} = \vertice \left( (\mathcal{P}_0)_{\infty} \cap B_1 (0) \right) \setminus \{0\}$ is a finite set. The set $\mathcal{Y}_{In}$ is also finite. From Lemma~\ref{lemma_alg_prop} we know that it holds $\mathcal{P} \subseteq \mathcal{P}_0$ and
		\begin{align}
			\label{eq_thm_alg_0}
			\cone \mathcal{Y}_{In} \subseteq \mathcal{P}_{\infty} \subseteq (\mathcal{P}_0)_{\infty} = \cone \mathcal{Y}_{Out}.
		\end{align}
		Upon termination, the set $\mathcal{Y}_{Out}$ satisfies $\mathcal{Y}_{Out} \subseteq \mathcal{Y}_{In} \cup \hat{C}$, where $\hat{C}$ contains only vectors with distance at most $\delta$ to the set $\mathcal{Y}_{In}$. Thanks to this we have
		\begin{align}
			\label{eq_thm_alg_1}
			\forall d \in \mathcal{Y}_{Out} \;\; \exists \tilde{r} \in \mathcal{Y}_{In} \; : \; \norm{ d-\tilde{r}} \leq \delta.
		\end{align}
		We also know that $\mathcal{Y}_{Out} = \vertice (\cone \mathcal{Y}_{Out} \cap B_1(0) ) \setminus \{0\}$, since $\cone \mathcal{Y}_{Out} = (\mathcal{P}_0)_{\infty}$, and that all elements of $\mathcal{Y}_{Out}$ and $\mathcal{Y}_{In}$ are normalized. Now we prove that this implies
		\begin{align}
			\label{eq_thm_alg_2}
			\forall d \in \left( \cone \mathcal{Y}_{Out} \cap B_1 (0) \right), \; \exists \tilde{r} \in \left( \cone \mathcal{Y}_{In} \cap B_1 (0) \right) \; : \; \norm{ d-\tilde{r}} \leq \delta.
		\end{align}
		Take a nonzero $d \in \left( \cone \mathcal{Y}_{Out} \cap B_1 (0) \right)$. Since $\left( \cone \mathcal{Y}_{Out} \cap B_1 (0) \right)$ is the convex hull of $\mathcal{Y}_{Out}$ and the vector $0$, there exist $d^{(1)}, \dots, d^{(k)} \in \mathcal{Y}_{Out}$ and $\alpha^{(1)}, \dots, \alpha^{(k)} > 0$ with $\sum_{i=1}^k \alpha^{(i)} \leq 1$ such that $d = \sum_{i=1}^k \alpha^{(i)} d^{(i)}$. According to~\eqref{eq_thm_alg_1}, for each $d^{(i)}$ there exists a corresponding $\tilde{r}^{(i)} \in \mathcal{Y}_{In}$ with $\norm{ d^{(i)} - \tilde{r}^{(i)} } \leq \delta$. Then, $\tilde{r} = \sum_{i=1}^k \alpha^{(i)} \tilde{r}^{(i)} \in \cone \mathcal{Y}_{In}$. Convexity of the norm shows both that $\tilde{r} = \sum_{i=1}^k \alpha^{(i)} \tilde{r}^{(i)} \in B_1(0)$ and that $\norm{ d-\tilde{r}} \leq \delta$, which proves~\eqref{eq_thm_alg_2}. Together with~\eqref{eq_thm_alg_0} this shows that 
		\begin{align*}
			\text{d}_H \left( \cone \mathcal{Y}_{In} \cap B_1(0), \cone\mathcal{Y}_{Out}\cap B_1(0) \right) \leq \delta,
		\end{align*}
		which by~\eqref{eq_thm_alg_0} implies that $\mathcal{Y}_{Out}$ ($\mathcal{Y}_{In}$) is a finite $\delta$--outer (inner) approximation of $\mathcal{P}_{\infty}$.
		%\eqref{eq_thm_alg_2} and~\eqref{eq_thm_alg_0} show that $\mathcal{Y}_{Out}$ is a finite $\delta$--outer approximation of $\mathcal{P}_{\infty}$. 
		Finally, to prove that problem~\eqref{eq:P'} is bounded, we prove that the set $\mathcal{P}_0$ is self-bounded, i.e. it satisfies $\mathcal{P}_0 \subseteq \{p_0\} + (\mathcal{P}_0)_{\infty}$ for some $p_0 \in \mathbb{R}^q$. %We follow similar strategy as in the proof of Lemma~\ref{lemma_Abounded}. 
		The set $\mathcal{P}_0$ is a convex polyhedron, therefore, it has an H-representation
		$$\mathcal{P}_0 = \bigcap_{i=1}^r \left\lbrace y \in \mathbb R^q \mid \trans{(w^i)} y \geq \gamma_i\right\rbrace$$
		for some $r \in \mathbb{N}, w^1,...,w^r \in \mathbb R^q \setminus \{0\}$ and $\gamma_1,...,\gamma_r \in \mathbb{R}$. Its recession cone is
		$$(\mathcal{P}_0)_{\infty} = \bigcap_{i=1}^r \left\lbrace y \in \mathbb R^q \mid \trans{(w^i)} y \geq 0 \right\rbrace.$$ The choice of $p_0 := \min_{i=1, \dots, r} \left\lbrace \frac{\gamma_i}{\trans{(w^i)} c}
		\right\rbrace c$ for $c \in \Int C \subseteq \Int (\mathcal{P}_0)_{\infty}$ gives the desired result.
	\end{proof}
	
	Since each direction $d$ considered throughout Algorithm~\ref{alg_new} is normalized (before being added to a set of outer directions or a set of inner directions), the angles between the recession cone $\mathcal{P}_{\infty}$ and the outer approximation $\cone \mathcal{Y}_{Out}$ \rev{of $\mathcal{P}_{\infty}$} are bounded as described in Remark~\ref{remark_angles}.
	
		\begin{remark} \label{remark_finiteness}
			Independently of this work, an algorithm for approximating recession directions of so-called spectahedral shadows was developed in~\cite{DorLoh2022}. 
			The proof of termination in a finite number of steps from~\cite[Theorem 4.4]{DorLoh2022} can be applied also to Algorithm~\ref{alg_new} if we modify the iteration by replacing lines 17-30 with lines 17-30 displayed in Algorithm~\ref{alg_modified}.
			{\scriptsize 
			\begin{center}
			\begin{minipage}{\linewidth}
			\begin{algorithm}[H]  
				\SetAlgoLined
				\setcounter{AlgoLine}{16}
				\caption{{\label{alg_modified}}\rev{Modified iteration of Algorithm~\ref{alg_new}.}}
					\Repeat{$H == \real^q$}{
						Set $H:= \real^q$.\\
					\For{$d \in \mathcal{Y}_{Out}$}{
						\For{$k = 1 : \Big\lceil \log_2 \frac{\norm{d - c}}{\delta} \Big\rceil$}{
								Set $d_k := \frac{2^k-1}{2^k}d + \frac{1}{2^k}c$. \\
								\eIf{$P_2 (v, d_k)$ is unbounded }{
									Update $\mathcal{Y}_{In} \gets \mathcal{Y}_{In}\cup\set{\frac{d_k}{\norm{d_k}}}$. }{
									Use optimal solutions $(x^{v,d_k},z^{v,d_k})$ and $w^{v,d_k}$ of $P_2(v,d_k)$ and $D_2(v,d_k)$ to update $\mathcal{\bar{\X}}\gets \mathcal{\bar{\X}}\cup\set{x^{v,d_k}}$, \g{$\bar{\Tset}\gets\bar{\Tset}\cup\set{\trans{T}\frac{w^{v,d_k}}{\trans{c} w^{v,d_k}} }$,} and 
									$H \gets H\cap\set{y\in\real^q \mid \trans{(w^{v,d_k})} (y-\Gamma(x^{v,d_k}))\geq0}$.
									}
									}
									}
Update $\mathcal{P}_0 \gets \mathcal{P}_0 \cap H$ and $\mathcal{Y}_{Out} \gets \vertice \left( (\mathcal{P}_0)_{\infty} \cap B_1 (0) \right) \setminus \{0\}$. 
}
			\end{algorithm}
\end{minipage}
\end{center} }
		\end{remark}

	\section{Algorithms for solving unbounded CVOPs}
	\label{sec_algs}
	A primal and a dual algorithm for solving convex vector optimization problems~\eqref{eq:P} that are allowed to be unbounded are presented in the following subsections.
	\subsection{Primal algorithm}
	\label{sec_genBenson}
	A finite $\delta$--outer approximation of $\mathcal{P}_\infty$ which defines a bounded problem \eqref{eq:P'} is computed by Algorithm \ref{alg_new}. 
	It also provides an initial outer approximation $\mathcal{P}_0$ of $\mathcal{P}$ which will replace the initialization phase~\cite[Equation 10]{LRU14} of~\cite[Algorithm 1]{LRU14} when dealing with unbounded problems.
	%\out{As in the initialization phase of~\cite[Algorithm 1]{LRU14}, an initial outer approximation $\mathcal{P}_0$ of $\mathcal{P}$ is determined by Algorithm~\ref{alg_new}.} 
	To compute a solution for the general convex problem \eqref{eq:P} the iterative part of~\cite[Algorithm 1]{LRU14} can be used for~\eqref{eq:P'}, where $\mathcal{Y}$ is set to $\mathcal{Y}_{Out}$, see Remark~\ref{remark_definition}. This generalized solution algorithm is summarized in this section.
	Similar to Algorithm~\ref{alg_new}, it contains a matrix $T$ and a set $\bar{\Tset}$, both of which will be discussed in Section \ref{sec_dual} as part of the dual algorithm. For a primal version of the algorithm these can be ignored and thus are colored in gray.

	{\scriptsize 
		\begin{center}
			\begin{minipage}{\linewidth}
				\begin{algorithm}[H]  
					\SetAlgoLined
			\caption{\label{alg_2}\rev{Primal Algorithm for solving convex vector optimization problems }}
%					\REQUIRE 
\KwIn{Problem~\eqref{eq:P} satisfying \rev{Assumption~\ref{assumption1}}, tolerances $\varepsilon, \delta > 0$. } 
%					\STATE 
Apply Algorithm~\ref{alg_new} with tolerance $\delta$ to problem~\eqref{eq:P}: Compute set $\mathcal{Y}_{Out}$; $H$-representation of the initial outer approximation $\mathcal{P}_0$ \rev{of $\mathcal{P}$}; set of weak minimizers $\bar{\X}$; \g{set of maximizers $\bar{\Tset}$.} \\
%					\STATE 
Set $k=0$, $M = \emptyset$, set $d = -c \in -\Int C$.\\
%					%		\verb+Iteration:+
					\While {$M \neq \mathbb R^q$}{
%					\STATE 
\rev{Set $M:=\real^q$.}\\
%					\STATE 
Compute the set $\mathcal{P}^V$ of vertices of $\mathcal{P}_k$.\\
					\For{$v\in\mathcal{P}^V$}{
%					\STATE 
Compute optimal solutions $(x^{v,d},z^{v,d})$ and $w^{v,d}$ to Problems \eqref{P2(v)} and \eqref{D2(v)}, respectively.\\
%					\STATE 
$\bar{\X}\gets\bar{\X}\cup\set{x^{v,d}}$\g{, $\bar{\Tset}\gets\bar{\Tset}\cup\set{\trans{T} w^{v,d}}$}. \\
					\If{ $z^v> \varepsilon$} { 
					$M\gets M\cap\set{y\in\real^q \mid \trans{(w^{v,d})} (y-\Gamma(x^{v,d}))\geq0}$.}  
%					\ENDFOR
}
					\If {$M\neq\real^q$}{ 
%					\STATE 
$\mathcal{P}_{k+1}\gets\mathcal{P}_k\cap M$ and $k\gets k+1$.
}
%					\ENDIF
%					\ENDWHILE
}
%					\ENSURE 
\KwOut{Set of weak minimizers $\bar{\X}$, set of directions $\mathcal{Y}_{Out}$, \g{set of maximizers $\bar{\mathcal{T}}$}. }
	\end{algorithm}
\end{minipage}
\end{center} }

	The next theorem states that Algorithm \ref{alg_2} outputs a finite weak $(\varepsilon, \delta)$--solution of~\eqref{eq:P} if it terminates. \rev{In Remark~\ref{remark_near_opt} this assumption will be discussed and modifications of the algorithm are given that guarantee finite termination.}
	\begin{theorem}
		\label{thm_alg_2}
		Assume that Algorithm~\ref{alg_2} terminated for a feasible problem~\eqref{eq:P} satisfying Assumption~\ref{assumption1}.
		Then, the pair $(\bar{\mathcal{X}}, \mathcal{Y}_{Out})$ outputted by Algorithm~\ref{alg_2} is a finite weak $(\varepsilon, \delta)$--solution of~\eqref{eq:P}. 
	\end{theorem}
	\begin{proof}
		According to Theorem~\ref{thm_alg_1}, the set $\mathcal{Y}_{Out}$ is a finite $\delta$--outer approximation of the recession cone $\mathcal{P}_{\infty}$ and problem~\eqref{eq:P'} is bounded. Since problem~\eqref{eq:P} satisfies Assumption~\ref{assumption1} and the ordering cone $\cone \mathcal{Y}_{Out}$ is polyhedral, problem~\eqref{eq:P'} satisfies~\cite[Assumption 4.1]{LRU14}, except for the feasible region $\mathcal{X}$ to be compact, which is discussed in~\cite[Remark~3,~Section~4.3]{LRU14}. Then, according to~\cite[Theorem 4.9 and Section~4.3]{LRU14} if \cite[Algorithm 1]{LRU14} terminates, it delivers a finite weak $\varepsilon$--solution $\bar{\X}$ of~\eqref{eq:P'} in the sense of~\cite[Definition 3.3]{LRU14}, i.e. it satisfies
		\begin{align*}
			\cl \left( \Gamma (\mathcal{X}) + \cone \mathcal{Y}_{Out} \right) \subseteq \conv \Gamma (\bar{\X}) + \cone \mathcal{Y}_{Out} - \varepsilon \{c\}.
		\end{align*}
		\rev{Lines 2-16} of Algorithm~\ref{alg_2} coincide with the steps of~\cite[Algorithm 1]{LRU14}, the only difference is in the initial outer approximation $\mathcal{P}_0$ \rev{of $\mathcal{P}$} computed in \rev{line}~1. 
		%\gk{This is where the existence of the solution / near-optimal solution will effect the results... }
		Since $\mathcal{P} \subseteq \cl \left( \Gamma (\mathcal{X}) + \cone \mathcal{Y}_{Out} \right)$ this implies
		\begin{align*}
			\mathcal{P} \subseteq \conv \Gamma (\bar{\X}) + \cone \mathcal{Y}_{Out} - \varepsilon \{c\}.
		\end{align*}
		Finally, the points $\Gamma (\bar{\X})$ are \rev{weakly $\cone \mathcal{Y}_{Out}$-minimal} in $\cl \left( \Gamma (\mathcal{X}) + \cone \mathcal{Y}_{Out} \right)$. As $C \subseteq \cone \mathcal{Y}_{Out}$ this implies that the points $\Gamma (\bar{\X})$ are \rev{weakly $C$-minimal} in $\mathcal{P}.$
		This jointly shows that $(\bar{\X}, \mathcal{Y}_{Out})$ is a finite weak $(\varepsilon, \delta)$--solution of~\eqref{eq:P}.
	\end{proof}

	\begin{remark}
		Let's consider a bounded problem~\eqref{eq:P} for which Algorithm~\ref{alg_2} was applied, yielding a finite weak $(\varepsilon, \delta)$-solution as defined in Definition~\ref{solutionconcept}. For a bounded problem~\eqref{eq:P} the solution concept of~\cite[Definition 3.3]{LRU14} (see~\eqref{sol-concept_bd} above) is also applicable. So, is there a connection between the output of Algorithm~\ref{alg_2} and the more restrictive definition of a solution for a bounded problem? Yes, the set of feasible points $\bar{\X}$ outputted by Algorithm~\ref{alg_2} is a finite weak $\varepsilon$-solution of~\eqref{eq:P} in the sense of~\cite[Definition 3.3]{LRU14} (respectively~\eqref{sol-concept_bd}). This follows from the fact that for a bounded problem~\eqref{eq:P} Algorithm~\ref{alg_new} outputs generators of the ordering cone $C$ as directions $\mathcal{Y}_{Out}$, see Lemma~\ref{lemma_alg_prop}. 
		%Consider a bounded problem~\eqref{eq:P}. Assume that Algorithm~\ref{alg_2} found a finite weak $(\delta, \varepsilon)$-solution $(\mathcal{Y}^{poi}, \mathcal{Y}_{Out})$ of~\eqref{eq:P} according to Definition~\ref{solutionconcept}. Then, the set of feasible points $\bar{{\X}}$ also outputted by the Algorithm~\ref{alg_2} is a finite weak $\varepsilon$-solution of~\eqref{eq:P} in the sense of~\eqref{sol-concept_bd} (respectively~\cite[Definition 3.3]{LRU14}). This follows from the fact that for a bounded problem Algorithm~\ref{alg_new} outputs generators of the ordering cone $C$ as directions $\mathcal{Y}_{Out}$, see Lemma~\ref{lemma_alg_prop}.
	\end{remark}
	
	\begin{remark}
		\label{remark_near_opt}
		In Theorems~\ref{thm_alg_1} and~\ref{thm_alg_2} we prove that Algorithm~\ref{alg_new}, respectively Algorithm~\ref{alg_2}, yields the desired output under the assumption that the algorithm terminated successfully. Let us now discuss this assumption. %The assumption %that the algorithm terminated successfully 
		It is equivalent to saying that (a) for all involved scalarizations that are bounded a solution exists and (b) the algorithm terminates in finitely many steps. %\out{Part (b) corresponds to the assumption for bounded CVOPs made already in~\cite[Theorem~4.9 and~4.14.]{LRU14}.} 
		\rev{
			Let us start with part (b). We address the finite termination of the algorithm for approximating recession directions (Algorithm~\ref{alg_new}) in Remark~\ref{remark_finiteness}. For Algorithm~\ref{alg_2} we would additionally need to guarantee finite termination for the second phase of the algorithm that is based on~\cite[Algorithm 1]{LRU14}. Recently,~\cite{AraUlusUmer2021} proposed a direction-free modification of~\cite[Algorithm 1]{LRU14} for which termination in a finite number of steps can be proven. Combining the modification proposed in Remark~\ref{remark_finiteness} with a second phase based on~\cite[Algorithm 2]{AraUlusUmer2021} would yield an algorithm for which a termination after a finite number of steps can be proven. Note, however, that~\cite[Algorithm 2]{AraUlusUmer2021} is based on a slightly different, direction-free, solution concept.
		}
		%	\rev{ Recently,~\cite{AraUlusUmer2021} proposed a modification of~\cite[Algorithm 1]{LRU14} for which termination in a finite number of steps can be proven. Note, however, that~\cite[Algorithm 2]{AraUlusUmer2021} is based not \br{? on a?} slightly different, direction-free solution concept. In Remark~\ref{remark_finiteness} we commented on a possible modification of Algorithm~\ref{alg_new} that would guarantee finiteness. }
		
		Let us now consider part (a) of this assumption. Throughout the two algorithms the three scalarizations~\eqref{P1(w)}, \eqref{P2(v)} and \eqref{D2(v)} are considered, they are all feasible and the considered scalarizations are solved whenever they are bounded: During the iterations of Algorithm~\ref{alg_2} all considered problems \eqref{P2(v)} are bounded according to Proposition~\ref{prop:P2unbounded2}, and by Proposition~\ref{prop:strongduality}(2), for all considered problems \eqref{D2(v)} a solution exists. Algorithm~\ref{alg_new} considers the scalarizations~\eqref{P1(w)}, \eqref{P2(v)} and always distinguishes between the scalarization being bounded or unbounded. If it is bounded, a solution is sought. But unlike~\cite{LRU14}, we do not assume a compact feasible set, so we cannot guarantee the existence of an optimal solution of the considered bounded scalarizations in both algorithms. However, whenever the scalarized problem is bounded, a near-optimal solution exists for any desired level of accuracy. 
		
		In practice, it is even less important, if a solution or just a near-optimal solution is computed as (i) the level of accuracy of the solvers are typically much smaller than the values of $\delta$ and $\varepsilon$ considered in Algorithm~\ref{alg_new} and~\ref{alg_2} and (ii) within an implementation, a bounded scalar problem will be solved up to the given level of accuracy (i.e. near-optimally solved) regardless of whether an optimal solution exists or not. In the following, we will discuss the impact of the level of accuracy of these near-optimal solutions on Algorithm~\ref{alg_new} and~\ref{alg_2}.

		Denote by $\epsilon \ll \min \{ \delta, \varepsilon \}$ the accuracy for solving scalar problems. Let us start with the (bounded) weighted sum scalarization. Assume that $\tilde{x}$ is feasible (i.e.~$\tilde{x} \in \mathcal{X}$) and $\epsilon$-optimal (i.e.~$\vert \trans{w} \Gamma (\tilde{x}) - \inf \{ \trans{w} \Gamma (x) \mid x \in \mathcal{X} \} \vert \leq \epsilon $) for problem~\eqref{P1(w)}. \rev{Then, it holds}
		\begin{align}\label{eq:shifted1}
			\Gamma (\mathcal{X}) \subseteq \{ y \in \real^q  \mid \trans{w} y \geq \trans{w} \Gamma (\tilde{x}) - \epsilon \}.
		\end{align}
		
		Now consider the (bounded) Pascoletti-Serafini scalarization. Assume that $(\tilde{x}, \tilde{z})$ and $\tilde{w}$ are feasible (for~\eqref{P2(v)} and~\eqref{D2(v)}, respectively) and $\epsilon$-optimal (i.e. $\vert \sup_{x \in \mathcal{X}} \{ - \tilde{w}^\T \Gamma (x) \} + \tilde{w}^\T v - \tilde{z} \vert \leq \epsilon$) for the pair of dual problems \eqref{P2(v)} and \eqref{D2(v)}. \rev{Then, using the weak duality between~\eqref{P2(v)} and~\eqref{D2(v)},  the fact that $\tilde{x} \in \mathcal{X}$ and the $\epsilon$-optimality of $(\tilde{x}, \tilde{z})$ and $\tilde{w}$, we obtain
			\begin{align*}
				\tilde{z} \leq - \tilde{w}^\T \Gamma (\tilde{x}) + \tilde{w}^\T v \leq \sup\limits_{x \in \mathcal{X}} \{ - \tilde{w}^\T \Gamma (x) \} + \tilde{w}^\T v \leq \tilde{z} + \epsilon. %\leq z^* + \epsilon.
			\end{align*} 
			Rearranging the terms, one can show that
			\begin{align*}
				\tilde{w}^\T \Gamma (x) \geq \tilde{w}^\T v - \tilde{z} - \epsilon \geq \tilde{w}^\T \Gamma (\tilde{x}) - \epsilon
			\end{align*}
			holds for any $x \in \mathcal{X}$. This implies }		
		%Then, one can show that it holds
		\begin{align}\label{eq:shifted2}
			\Gamma (\mathcal{X}) \subseteq \{ y \in \real^q \mid \tilde{w}^\T y \geq \tilde{w}^\T \Gamma (\tilde{x}) - \epsilon  \}.
		\end{align}
		
		Within the algorithms, the quantity $-\epsilon$ does not appear in the intercept of the halfspaces. This means that the \rev{halfspaces that are used to construct the outer approximation are slightly shifted versions of the ones given by \eqref{eq:shifted1} and \eqref{eq:shifted2}.} They are, however, not tilted, which means that the recession directions searched within Algorithm~\ref{alg_new} are not affected. We could incorporate the quantity $-\epsilon$ into the implementation of Algorithm~\ref{alg_2}. But this quantity is in practice significantly smaller than the target tolerance $\varepsilon$. 
		Thus, for practical purposes, the assumptions in Theorems~\ref{thm_alg_1} and~\ref{thm_alg_2} boil down to the assumption that the algorithm terminates in finitely many steps, \rev{the usual assumptions in the literature. Above we also referred to modifications that allow for proving finiteness.} 
		%\out{the same assumption made also in the bounded case in~\cite[Theorem~4.9 and~4.14.]{LRU14}.}  
		The same holds true for the dual algorithm considered in the next subsection under an additional condition, see Remark~\ref{rem:condition_dual}. 
		%\out{(Is that true?) This is true if we can guarantee that \eqref{P1(w)} is bounded. If this is the case, then it is not difficult to see that the cutting planes in the algorithm contains the lower image; moreover, they are just shifted in the upward direction by $\epsilon$ but not tilted. However, we may not say the boundedness of these problems directly, I guess. In Step 3. (c) i. of the dual algorithm, we solve \eqref{P1(w)} for some $w \in (\cone \mathcal{Y}_{Out})^+ \subseteq \mathcal{P}_\infty^+ = \cl W$. If the problem is not self-bounded, then it may be possible to obtain that $w \notin W$, right? It is possible that I am missing something.. But if not, then \eqref{P1(w)} may still be unbounded. Even then, the algorithm may still work. I add some suggestions in the pseudocode of the dual algorithm, but didn't change anything in the proof.}
		% \gk{plus Bensolvetools used for working with polyhedrons also do numerics, so also work only up to some numerical accuracy. Should we mention also that?}
	\end{remark}

		\subsection{Dual algorithm}
		\label{sec_dual}
		
		The theory of geometric duality for CVOPs was developed by Heyde in~\cite{Hey13}.
		We shortly recall some basic facts here, but we refer the reader to \cite{Hey13, LRU14} for more details.
		A geometric dual algorithm to solve bounded CVOPs was proposed in \cite{LRU14}. In fact, both algorithms from \cite{LRU14} solve both the primal and the geometric dual problem simultaneously. We will show that the same holds true for the primal and dual algorithm for unbounded CVOPs proposed here.
		
		In this section, we show that the geometric dual algorithm~\cite[Algorithm 2]{LRU14} can also be used in order to solve unbounded problems when the following small modifications are taken into account: (i) we will propose a generalized solution concept for the dual problem~\eqref{(D)} defined below, (ii) similarly to the primal algorithm, the initialization phase of~\cite[Algorithm 2]{LRU14} is replaced by a new initialization based on Algorithm \ref{alg_new}, (iii) the unbounded problem~\eqref{eq:P} is transformed into a bounded problem~\eqref{eq:P'}, see Section~\ref{sec_transfo} and (iv)~\cite[Algorithm 2]{LRU14} is used to solve the geometric dual~\eqref{(D')} of the bounded problem~\eqref{eq:P'}. 
		Unlike the primal variant, the dual algorithm works under an additional condition discussed in Remark~\ref{rem:condition_dual} and \rev{line}~2 of Algorithm~\ref{alg_dual}. This condition is always satisfied for self-bounded problems.
		
		Before providing the definition of the geometric dual problem, let us introduce some notation. Recall that $c\in \Int C$ is fixed. Further, we fix a nonsingular matrix $T\in \real^{q\times q}$ such that the last column of $T$ is the vector $c$.
		This is the matrix appearing in (the gray parts of) Algorithms \ref{alg_new} and \ref{alg_2}. The geometric dual problem of problem~\eqref{eq:P} is given by
		\begin{align*}
			\label{(D)}
			\text{maximize~} D^*(t) \quad \text{with respect to~} \leq_K \quad \text{subject to~}  w(t) \in C^+ \tag{D},
		\end{align*}
		where the ordering cone is $K:= \{\trans{(0,0,\ldots,0,k)} \in \real^q \mid k \geq 0\}= \real_+e^q$, 
		the function $w: \real^q \to \real^q$ is defined as 
		\begin{equation}\label{eq:w(t)}
			w(t):= \trans{\left(\left( t_1, \ldots, t_{q-1}, 1 \right) T^{-1}\right)},
		\end{equation}
		%$w(t)$ is defined as in \eqref{eq:w(t)} 
		and the objective function $D^*: \real^q \to \real^{q-1}\times\bar{\real}$, where $\bar{\real}$ is the extended real line, is given by
		\begin{align*}%\label{w(t)}
			%	w(t):= &\left(\left( t_1, \ldots, t_{q-1}, 1 \right) T^{-1}\right)^T, \\
			D^*(t) := &\trans{\left( t_1,\ldots, t_{q-1},\inf_{x\in\mathcal{X}}\left[\trans{w(t)}\Gamma(x)\right] \right)}.
		\end{align*}
		
		The \emph{lower image} of \eqref{(D)} is $\mathcal{D} := D^*(\mathcal{T})-K$, where $\mathcal{T} := \{t\in \real^q \mid w(t)\in C^+\}$ is the feasible region of \eqref{(D)}. $\mathcal{D}$ is a closed convex set, see \cite{LRU14}.
		
		Let us now review the solution concept for the dual problem~\eqref{(D)} given in~\cite{Hey13, LRU14} and discuss the consequences it would have if the underlying primal problem~\eqref{eq:P} is unbounded.
		A finite $\varepsilon$-solution $\bar{\Tset} \subseteq \mathcal{T}$ of \eqref{(D)} is a finite set such that $D^*(t)$ is a $K$-maximal element of $D^*(\mathcal{T})$ for all $t\in\bar{\Tset}$ and 
		\begin{equation}\label{eq:dualsoln_old}
			\conv D^*(\bar{{\Tset}})-K+\varepsilon \{e^q\} \supseteq \mathcal{D}
		\end{equation}
		holds. Note that if problem \eqref{eq:P} is unbounded, then by Proposition \ref{prop: Pbounded} $\trans{w(t)}\Gamma(x)$ is unbounded from below, hence $D_q^*(t) = -\infty$ for some $t\in \mathcal{T}$. In particular, by Remark \ref{rem:extdirC+}, there exists $\bar{t}\in \mathcal{T}$ such that $w(\bar{t})$ is an extreme direction of $C^+$ and $D_q^*(\bar{t})= -\infty$. Moreover, if \eqref{eq:P} is not self-bounded, then by Lemma~\ref{lemma_weights} the set $W$ is not a closed set. In this case, the domain of problem \eqref{(D)} is not a closed set and it is not possible to find a finite $\varepsilon$-solution $\bar{\Tset}$ of \eqref{(D)} as \eqref{eq:dualsoln_old} can not be satisfied. Indeed, the projection of $\conv D^*(\bar{\Tset})-K$ onto its first $q-1$ components would not cover the projection of $\mathcal{D}$ onto its first $q-1$ components. 
		
		Motivated by this observation, we %\brq{define an $(\varepsilon, \delta)$--solution for the geometric dual problem \eqref{(D)}.}
		introduce a generalized solution concept for problem~\eqref{(D)} that allows to treat also unbounded problems~\eqref{eq:P}. It is based on a $\delta$--outer approximation of $\mathcal{P}_{\infty}$ and an $\varepsilon$--solution of the geometric dual problem~\eqref{(D')} of the bounded problem~\eqref{eq:P'}.
		
		\begin{definition}{\label{solutionconcept_dual}}
			A pair $(\bar{\Tset},\mathcal{Y})$ is a $(\varepsilon, \delta)$--solution of~\eqref{(D)} if $\mathcal{Y}$ is a $\delta$--outer approximation of $\mathcal{P}_{\infty}$ and $\bar{\Tset}$ is an $\varepsilon$-solution of the modified  
			geometric dual problem
			\begin{align*}
				\label{(D')}
				\text{maximize~} D^*(t) \quad \text{with respect to~} \leq_K \quad \text{subject to~}  w(t) \in (\cone \mathcal{Y})^+.\tag{D'}
			\end{align*}
			
		\end{definition}
		
		Note that the modified 
		geometric dual problem~\eqref{(D')} is simply the geometric dual problem of~\eqref{eq:P'} defined in Remark~\ref{remark_definition}. 
		
		The following lemma will be used to prove the correctness of both the primal and the dual algorithm (presented below) with respect to dual solutions.
		
		\begin{lemma} \label{lemma:dual} 
			Let Assumption~\ref{assumption1} be satisfied. Let $\bar{\mathcal{X}}, \bar{\mathcal{T}}$ and  $\cone \mathcal{Y}_{Out}$ be respectively the set of weak minimizers, the set of maximizers and the outer approximation of $\mathcal{P}_\infty$ returned by Algorithm~\ref{alg_new}. Let $\mathcal{Y}$ in problem \eqref{(D')} be set to $\mathcal{Y}_{Out}$. \rev{Then,}
			\begin{enumerate}
				\item $\bar{\mathcal{T}}\supseteq \{\trans{T}y \mid y\text{~is an extreme direction of~} (\cone\mathcal{Y}_{Out})^+, c^\T y = 1\}$.
				\item $\bar{\mathcal{T}}$ consists of maximizers for problem \eqref{(D')}.
				%			\item The set \begin{equation}\label{eq:D0} 
					%				\mathcal{D}_0 = \set{t\in\real^q |\; w(t)\in(\cone\mathcal{Y}_{Out})^+ } \cap \bigcap_{x^*\in \bar{{\X}}}\set{t\in \real^q |\; \trans{w(t)}\Gamma(x^*)-t_q \geq 0},\end{equation} 
				%			where $w: \real^q \to \real^q$ is defined as in~\eqref{eq:w(t)}, is an outer approximation of the lower image of \eqref{(D')}.
			\end{enumerate}
		\end{lemma}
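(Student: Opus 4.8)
The plan is to unwind the definitions of $w(t)$, $D^*(t)$, and the matrix $T$, and to match the quantities recorded in the gray parts of Algorithm~\ref{alg_new} against the geometric dual data of problem~\eqref{eq:P'}. Recall from~\eqref{eq:w(t)} that $w(t) = \trans{((t_1,\dots,t_{q-1},1)T^{-1})}$; since $T$ is nonsingular with last column $c$, this map $t\mapsto w(t)$ is an affine bijection between the hyperplane $\{t\in\real^q\mid t_q \text{ arbitrary}\}$ cut down to the affine set where the last coordinate of $tT^{-1}$ equals $1$, and the affine set $\{w\in\real^q\mid \trans{c}w = 1\}$. Indeed $\trans{c}w(t) = \trans{((t_1,\dots,t_{q-1},1)T^{-1})}c = (t_1,\dots,t_{q-1},1)T^{-1}c = (t_1,\dots,t_{q-1},1)e^q = 1$, using that $T^{-1}c = e^q$ because $c$ is the last column of $T$. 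Conversely, given $w$ with $\trans{c}w=1$, the vector $t = \trans{(wT)}$ wait — more precisely, the first $q-1$ coordinates of $\trans{(\trans{w}T)}$ together with any $t_q$ give a preimage, and one checks the last coordinate of $(\trans{w}T)T^{-1} = \trans{w}$ composed appropriately is consistent; the upshot is that $\bar t = \trans{T}w$ satisfies $w(\bar t) = w$ whenever $\trans{c}w = 1$, since $(\trans{(\trans{T}w)})_{1..q-1} = (\trans{w}T)_{1..q-1}$ and the last component of $\trans{w}T$ is $\trans{w}c = 1$, so $w(\trans{T}w) = \trans{((\trans{w}T)T^{-1})} = w$.

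With this in hand, part~1 follows by inspecting which vectors get added to $\bar{\Tset}$ in Algorithm~\ref{alg_new}. In Step~3(b) the algorithm adds $\trans{T}z$ for each $z\in R^*$ for which (P$_1(z)$) is bounded; in Steps~5(b) and~8(b)(ii) it adds $\trans{T}\frac{w^{v,d}}{\trans{c}w^{v,d}}$, which is of the form $\trans{T}w$ with $\trans{c}w = 1$. By Theorem~\ref{thm_alg_1} and Lemma~\ref{lemma_alg_prop}, upon termination $\mathcal{P}_0$ is a polyhedron with $\mathcal{P}\subseteq\mathcal{P}_0$ and $(\mathcal{P}_0)_\infty = \cone\mathcal{Y}_{Out}$, and the H-representation of $\mathcal{P}_0$ is built precisely from the supporting halfspaces $\{y\mid\trans{w}(y-\Gamma(x))\ge 0\}$ whose normals $w$ are the vectors just listed (after normalizing $\trans{c}w = 1$, which is automatic for the $R^*$ normals and is enforced by the division for the Pascoletti–Serafini duals). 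Hence the extreme directions of $(\mathcal{P}_0)_\infty^+ = (\cone\mathcal{Y}_{Out})^+$, normalized so that $\trans{c}y = 1$, are among the normals used, i.e. among the preimages under $w\mapsto\trans{T}w$ of vectors whose $\trans{T}$-image sits in $\bar{\Tset}$. Applying the bijection from the previous paragraph, $y$ is an extreme direction of $(\cone\mathcal{Y}_{Out})^+$ with $\trans{c}y=1$ iff $\trans{T}y$ is the corresponding dual variable, and every such extreme direction must have appeared as a halfspace normal in $\mathcal{P}_0$ (otherwise $\mathcal{P}_0$ would not have the correct recession cone, contradicting $(\mathcal{P}_0)_\infty = \cone\mathcal{Y}_{Out}$). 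This gives the claimed inclusion $\bar{\Tset}\supseteq\{\trans{T}y\mid y\text{ extreme direction of }(\cone\mathcal{Y}_{Out})^+,\ \trans{c}y=1\}$.

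For part~2, I would invoke the geometric duality correspondence of~\cite{Hey13,LRU14} between the primal problem~\eqref{eq:P'} and its geometric dual~\eqref{(D')}: a point $t\in\mathcal{T}'$ is a maximizer of~\eqref{(D')} exactly when $\{y\mid\trans{w(t)}y = D_q^*(t)\}$ is a supporting hyperplane of the lower image $\mathcal{D}'$, equivalently when $w(t)$ is a weight for which the weighted-sum scalarization of~\eqref{eq:P'} attains its infimum and that infimum equals $D_q^*(t)$. For each $t = \trans{T}z$ with $z\in R^*\cap W$ (i.e. (P$_1(z)$) bounded), Proposition~\ref{prop:1} (applied to the corresponding optimal $x^*$) shows $\{y\mid\trans{z}y = \trans{z}\Gamma(x^*)\}$ is a supporting hyperplane of $\mathcal{P}$, hence of the larger upper image of~\eqref{eq:P'}; and for each $t = \trans{T}\frac{w^{v,d}}{\trans{c}w^{v,d}}$ coming from Steps~5 or~8, Proposition~\ref{prop:1} again yields that $w^{v,d}$ (hence its positive scaling) defines a supporting hyperplane. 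Translating supporting hyperplanes of the upper image into $K$-maximal points of the lower image via geometric duality shows each such $t$ lies in $\bar{\Tset}$ and is a maximizer of~\eqref{(D')}. The only subtlety — and the step I expect to require the most care — is ensuring that $w(t)\in(\cone\mathcal{Y}_{Out})^+$ for \emph{all} the recorded $t$, i.e. that the weights stored by Algorithm~\ref{alg_new} are feasible for the \emph{modified} dual with ordering cone $(\cone\mathcal{Y}_{Out})^+$ rather than $C^+$. This holds because $C\subseteq\cone\mathcal{Y}_{In}\subseteq\cone\mathcal{Y}_{Out}$ gives $(\cone\mathcal{Y}_{Out})^+\subseteq C^+$, so feasibility for the modified problem is \emph{stronger}; one must check it directly, using that every halfspace normal $w$ defining $\mathcal{P}_0$ is, by construction, a dual-feasible direction for $(\mathcal{P}_0)_\infty = \cone\mathcal{Y}_{Out}$, i.e. $w\in((\mathcal{P}_0)_\infty)^+ = (\cone\mathcal{Y}_{Out})^+$. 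Hence $w(t)\in(\cone\mathcal{Y}_{Out})^+$ for every $t\in\bar{\Tset}$ and part~2 follows.
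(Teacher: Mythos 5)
Your proposal is correct and follows essentially the same route as the paper: part 1 by identifying $\bar{\Tset}$ with the images under $w \mapsto \trans{T}\frac{w}{\trans{c}w}$ of the halfspace normals defining $\mathcal{P}_0$ and using $(\cone\mathcal{Y}_{Out})^+ = ((\mathcal{P}_0)_\infty)^+ = \cone\{\text{normals}\}$, and part 2 by combining the geometric-duality maximality of the recorded $t$'s (via the supporting hyperplanes from Proposition~\ref{prop:1} and the results of~\cite{Hey13,LRU14}) with the feasibility check $w(\bar t) \in (\cone\mathcal{Y}_{Out})^+$. The only cosmetic difference is that the paper first invokes maximality for~\eqref{(D)} and then restricts to the smaller feasible set of~\eqref{(D')}, whereas you argue maximality for~\eqref{(D')} directly; both hinge on the same feasibility computation.
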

		\begin{proof}
			\begin{enumerate}
				\item For each $x^{v,d} \in\bar{\mathcal{X}}$ (respectively for each $\frac{T^\T w^{v,d}}{c^\T w^{v,d}} \in \bar{\Tset}$), there exists $w^{v,d}$ with $\frac{T^\T w^{v,d}}{c^\T w^{v,d}} \in \bar{\Tset}$ (respectively $x^{v,d} \in\bar{\mathcal{X}}$) such that $$\{y\in \real^q \mid (w^{v,d})^\T( y-\Gamma(x^{v,d}))\geq0\} \rev{\supseteq \mathcal{P}^0}$$ \rev{by the construction of $\mathcal{P}^0$}. We abuse the notation here as some of these $w^{v,d}$'s are the generating vectors of $C^+$. The corresponding $x^{v,d}$ and $\frac{T^\T w^{v,d}}{c^\T w^{v,d}}$ are added to the respective sets in \rev{line 10 of Algorithm~\ref{alg_new}}. Let's denote the set of all such $w^{v,d}$'s by $\bar{\mathcal{W}}$. Then, we have $\bar{\Tset} = \{\frac{T^\T w}{c^\T w} \mid w \in \bar{\mathcal{W}} \}$ and $$\mathcal{P}_0 = \bigcap_{w^{v,d}\in \bar{\mathcal{W}}}\{y\in \real^q \mid (w^{v,d})^\T( y-\Gamma(x^{v,d}))\geq0\}.$$ This implies that $(\mathcal{P}_0)_\infty^+ = \cone \bar{\mathcal{W}}$ 
				%			\out{( Proof: Let $w\in \bar{W}, d\in (\mathcal{P}_0)_\infty$. For all $y\in \mathcal{P}_0, \lambda \geq 0$, we have $y+\lambda d \in \mathcal{P}_0$, hence $w^\T( y+\lambda d-\Gamma(x^w))\geq0$, where $x^w$ is just the corresponding $x$. This implies $w^\T d \geq 0$, hence $w \in ((\mathcal{P}_0)_\infty)^+$. For the other inclusion, let $\bar{w} \notin \cone \bar{W}$. There exists $d \in (\cone \bar{W})^+$ with $\bar{w}^\T d < 0$. Let $y\in \mathcal{P}_0$. We have $(w)^\T( y-\Gamma(x^{w}))\geq0$ for all $w \in \bar{W}$. Moreover, $y+\lambda d \in \mathcal{P}_0$ for all $\lambda \geq 0$ since $w^\T d \geq 0$ for all $w \in \cone \bar{W}$. This gives $d \in (\mathcal{P}_0)_\infty$, hence $\bar{w} \notin (\mathcal{P}_0)_\infty^+$.} 
				and $(\mathcal{P}_0)_\infty = (\cone \bar{\mathcal{W}})^+$. Now, since $\mathcal{Y}_{Out} = \vertice \left( (\cone \bar{\mathcal{W}})^+ \cap B_1 (0) \right) \setminus \{0\}$, we have $\cone \mathcal{Y}_{Out} = (\cone \bar{\mathcal{W}})^+$, 
				%\out{ (explanation: $\mathcal{Y}_{Out} = \vertice \left( (\cone \bar{\mathcal{W}})^+ \cap B_1 (0) \right) \setminus \{0\}$ implies that any $y \in \mathcal{Y}_{Out}$ is also in $(\cone \bar{\mathcal{W}})^+$. On the other hand, after normalizing with respect to the $\ell_1$ norm, the extreme directions of $(\cone \bar{\mathcal{W}})^+$ are in $\vertice \left( (\cone \bar{\mathcal{W}})^+ \cap B_1 (0) \right)$.)} 
				hence $(\cone \mathcal{Y}_{Out})^+ = \cone \bar{\mathcal{W}}.$ Therefore, the set of extreme directions of $(\cone \mathcal{Y}_{Out})^+$ is a subset of $\bar{\mathcal{W}}$ and the result follows. %$$\bar{\Tset} \supseteq \{\trans{T}y |\; y \text{~is an extreme direction of ~} (\cone\mathcal{Y}_{Out})^+, c^\T y = 1\}.$$	
				\item From \cite[Propositions 3.5 and 4.6]{LRU14}, $\bar{\Tset}$ returned by Algorithm \ref{alg_new} consists of maximizers for problem \eqref{(D)}. We will show that any $\bar{t}\in \bar{\Tset}$ is also feasible for $\eqref{(D')}$, and hence a maximizer of $\eqref{(D')}$. From the proof of the previous statement, any $\bar{t}\in \bar{\Tset}$ is of the form $\bar{t}=\frac{T^\T \bar{w}}{\bar{w}^\T c}$ for some $\bar{w}\in \bar{\mathcal{W}}$. Moreover, $\bar{t}_q = 1$ by construction of the matrix $T$. Then, for any $y\in \real^q$, we have $w(\bar{t})^\T y = \bar{t}^\T T^{-1}y = \frac{\bar{w}^\T y}{\bar{w}^\T c}$. Noting that $C \subseteq \cone \mathcal{Y}_{Out} = (\cone \bar{\mathcal{W}})^+$ and $c \in \Int C$, we have $\bar{w}^\T c > 0$. Moreover, $\bar{w}^\T y \geq 0$ for any $y\in \cone\mathcal{Y}_{Out}$. Thus, $w(\bar{t}) \in (\cone \mathcal{Y}_{Out})^+$. 
			\end{enumerate}
		\end{proof}
		
		We will show now that the primal Algorithm~\ref{alg_2} also solves the geometric dual problem~\eqref{(D)} in the sense of the solution concept for unbounded problems given in Definition \ref{solutionconcept_dual}.  
		
		\begin{theorem}
			\label{thm_alg_2_dual}
			Assume that the Algorithm~\ref{alg_2} terminated for problem~\eqref{eq:P} satisfying Assumption~\ref{assumption1}.
			Then, the pair $(\bar{\mathcal{T}}, \mathcal{Y}_{Out})$ outputted by Algorithm~\ref{alg_2} is a finite $(\varepsilon, \delta)$--solution of~\eqref{(D)}. 
		\end{theorem} 
		\begin{proof} 
			It is sufficient to show that the set $\bar{\mathcal{T}}$ returned by Algorithm \ref{alg_2} is a finite $\varepsilon$-solution to the modified geometric dual problem \eqref{(D')}, where the ordering cone $\mathcal{Y}$ is set to $\mathcal{Y}_{Out}$. Note that the only difference between \cite[Algorithm 1]{LRU14} applied for \eqref{(D')} and Algorithm \ref{alg_2} is in \rev{the initialization step:} the former initializes the set of maximizers as $$\{\trans{T}y \mid y\text{~is an extreme direction of~} (\cone\mathcal{Y}_{Out})^+, c^\T y = 1\},$$ whereas the latter may start with some additional maximizers for \eqref{(D')}, see Lemma~\ref{lemma:dual}. It follows from \cite[Theorem 4.9]{LRU14} that $\bar{\mathcal{T}}$ is a finite $\varepsilon$-solution to \eqref{(D')}. %Hence, assuming that Algorithm \ref{alg_2} terminates, it returns a $(\varepsilon, \delta)$--solution of~\eqref{(D)}. 
		\end{proof}

		%\begin{remark}\label{remark:geom_dual}
		Note that Algorithm~\ref{alg_2} simply solves problem \eqref{eq:P'} from Remark \ref{remark_definition} using the iterations of the primal algorithm from~\cite{LRU14}. The problem \eqref{eq:P'} can also be solved by applying the iterations of the geometric dual algorithm from~\cite{LRU14} after the same initialization phase conducted in Algorithm \ref{alg_2} and by setting the initial outer approximation $\mathcal{D}_0$ to the lower image of the geometric dual problem as 
		\begin{equation}\label{eq:D0} 
			\mathcal{D}_0 = \set{t\in\real^q \mid w(t)\in (\cone\mathcal{Y}_{Out})^+ } \cap \bigcap_{x^*\in \bar{{\X}}}\set{t\in \real^q \mid \trans{w(t)}\Gamma(x^*)-t_q \geq 0},\end{equation} 
		where $\bar{\X}$ and $\cone \mathcal{Y}_{Out}$ are respectively the set of (weak) minimizers and the outer approximation of $\mathcal{P}_\infty$ obtained from Algorithm~\ref{alg_new}, and $w: \real^q \to \real^q$ is defined as in~\eqref{eq:w(t)}. 
		
		%\gk{$\bar{{\X}}$ is the set of (weak) minimizers from Alg.~\ref{alg_new}, right? }\fu{Yes, I added this info too.}
		%\begin{equation}\label{eq:w(t)}
		%w(t):= \left(\left( t_1, \ldots, t_{q-1}, 1 \right) T^{-1}\right)^\T.
		%\end{equation}
		%\end{remark}
		
		Next, we provide the description of the geometric dual algorithm for solving a potentially unbounded problem~\eqref{eq:P}. 
		
		\begin{remark}\label{rem:condition_dual}
			%\fu{
				%From Lemma~\ref{lemma_weights}, we have $(\cone \mathcal{Y}_{Out})^+ \subseteq \mathcal{P}_\infty^+ = \cl W$. Note that it is possible to have $w \in \bd (\cone \mathcal{Y}_{Out})^+ \setminus W$, which leads to an unbounded problem \eqref{P1(w)}. To detect such cases, we add an additional check in Algorithm~\ref{alg_dual}, Step 2. If \eqref{P1(w)} is bounded, that is $w \in W$, for all extreme directions $w$ of $(\cone \mathcal{Y}_{Out})^+$, then $(\cone \mathcal{Y}_{Out})^+ \subseteq W$ is satisfied. Note that by Lemma~\ref{lemma_weights}, this is always the case if the problem is self-bounded.
				% Then, similar to the primal algorithm, for sufficiently small accuracy level $\epsilon$ for \eqref{P1(w)}, an $\epsilon$-optimal solution can be attained and used in Step 4. (c) i. of the dual algorithm.
				%}
			The dual algorithm works with weighted sums scalarizations. To guarantee their boundedness we add an additional check -- see \rev{line}~2 of Algorithm~\ref{alg_dual} -- before proceeding with the iterations. If this test is not passed, we recommend to use the primal algorithm. To see from where the issue arises, recall that Lemma~\ref{lemma_weights} implies $(\cone \mathcal{Y}_{Out})^+ \subseteq \mathcal{P}_\infty^+ = \cl W$. This makes it possible for a weight $w \in \bd (\cone \mathcal{Y}_{Out})^+ \setminus W$ to exists for which the problem~\eqref{P1(w)} is unbounded. In \rev{line}~2 of Algorithm~\ref{alg_dual} we solve scalarizations \eqref{P1(w)} for all extreme directions $w$ of $(\cone \mathcal{Y}_{Out})^+$. If all of these problems are bounded, then $(\cone \mathcal{Y}_{Out})^+ \subseteq W$ is satisfied. Note that this is always the case for a self-bounded problem, see Lemma~\ref{lemma_weights}. 	
		\end{remark}

			{\scriptsize 
			\begin{center}
				\begin{minipage}{\linewidth}
					\begin{algorithm}[H]  
						\SetAlgoLined
									\caption{\label{alg_dual}\rev{Dual Algorithm for solving convex vector optimization problems}}
							\KwIn{ Problem~\eqref{eq:P} satisfying \rev{Assumption~\ref{assumption1}}, tolerances $\varepsilon, \delta > 0$.}  
							%				\STATE 
							Apply Algorithm~\ref{alg_new} with tolerance $\delta$ to problem~\eqref{eq:P}: Compute set $\mathcal{Y}_{Out}$; set of weak minimizers $\bar{\X}$; set $\bar{\Tset}$; and $H$-representation of the initial outer approximation $\mathcal{D}_0$ \rev{of $\mathcal{D}$} as in \eqref{eq:D0}. \\
							%				\STATE 
							Solve \eqref{P1(w)} for all extreme directions $w$ of $(\cone \mathcal{Y}_{Out})^+$. If all problems are bounded,  continue. Otherwise, use the primal algorithm. \\ 
							%				\STATE 
							Set $k:=0$, $M := \emptyset$. \\
							%				%		\verb+Iteration:+
											\While {$M \neq \mathbb R^q$}{
							%				\STATE 
							Set $M:=\real^q$ \\
							%				\STATE 
							Compute the set $\mathcal{D}^V$ of vertices of $\mathcal{D}_k$\\
											\For{$v\in\mathcal{D}^V$}{
							%				\STATE 
							Compute an optimal solution $x^{v}$ to Problem \eqref{P1(w)} where $w = w(v)$. \\
							%				\STATE 
							$\bar{\X}\gets \bar{\X}\cup\set{x^v}$, $\bar{\Tset}\gets\bar{\Tset}\cup\set{\trans{T} w(v)}$. \\
											\If{ $v_q - \trans{w(v)}\Gamma(x^v)>\varepsilon$} {
											%\STATE 
											$M\gets M\cap\set{t \in \real^q \mid \trans{w(t)}\Gamma(x^v) \geq t_q}$. }
											%\ENDIF 
							%				\ENDFOR
						}
											\If {$M\neq\real^q$} {
							%				\STATE 
							$\mathcal{D}_{k+1} \gets \mathcal{D}_k\cap M$ and $k\gets k+1$.}
							%				\ENDIF
							%				\ENDWHILE
						}
											\KwOut{ Set of weak minimizers $\bar{\X}$, set of directions $\mathcal{Y}_{Out}$, set of maximizers $\bar{\mathcal{T}}$. }
						\end{algorithm}
					\end{minipage}
				\end{center}
			}
		
		%\out{Note that if \eqref{P1(w)} for $w = w(v)$ is an unbounded problem, then, $w(v) \in \bd (\cone \mathcal{Y}_{Out})^+$. We already have a vertical hyperplane passing through vertex $v$, so I think the algorithm should still work.}
		\begin{theorem}\label{thm:dual_alg}
			Assume that Algorithm~\ref{alg_dual} terminated (outside of \rev{line}~2) for problem~\eqref{eq:P} satisfying Assumption~\ref{assumption1}. Then, the pairs $(\bar{\mathcal{X}}, \mathcal{Y}_{Out})$ and $(\bar{\mathcal{T}}, \mathcal{Y}_{Out})$ outputted by Algorithm~\ref{alg_dual} are a finite weak $(\varepsilon, \delta)$--solution of~\eqref{eq:P} and a finite $(\varepsilon, \delta)$--solution of~\eqref{(D)}, respectively.
		\end{theorem}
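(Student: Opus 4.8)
The plan is to reduce Theorem~\ref{thm:dual_alg} to the already-established Theorems~\ref{thm_alg_1}, \ref{thm_alg_2} and~\ref{thm_alg_2_dual}, together with the geometric duality results of~\cite{LRU14}. First I would invoke Theorem~\ref{thm_alg_1}: since Step~1 of Algorithm~\ref{alg_dual} calls Algorithm~\ref{alg_new} with tolerance~$\delta$ on a feasible problem satisfying Assumption~\ref{assumption2}, the returned set $\mathcal{Y}_{Out}$ is a finite $\delta$--outer approximation of $\mathcal{P}_\infty$ and the transformed problem~\eqref{eq:P'} (equivalently, the primal problem with ordering cone $\cone\mathcal{Y}_{Out}$) is bounded. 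Next I would argue that, because the algorithm terminated outside Step~2, the boundedness test there was passed, so $(\cone\mathcal{Y}_{Out})^+ \subseteq W$ holds (Remark~\ref{rem:condition_dual}); hence every weighted-sum scalarization~\eqref{P1(w)} encountered during the iterations with weight $w(v) \in (\cone\mathcal{Y}_{Out})^+$ is bounded and the calls in Step~4(c)(i) are legitimate.

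The core observation is that Steps~3--4 of Algorithm~\ref{alg_dual}, initialized with $\mathcal{D}_0$ as in~\eqref{eq:D0}, are precisely the iterations of the geometric dual algorithm~\cite[Algorithm 2]{LRU14} applied to the bounded problem~\eqref{eq:P'}, which satisfies~\cite[Assumption 4.1]{LRU14} up to compactness of $\mathcal{X}$ (handled as in~\cite[Remark~3, Section~4.3]{LRU14}, exactly as in the proof of Theorem~\ref{thm_alg_2}). The only discrepancy is the initialization: \cite[Algorithm 2]{LRU14} would start from the lower image of the geometric dual of~\eqref{eq:P'} built from the extreme directions of $(\cone\mathcal{Y}_{Out})^+$ and the corresponding minimizers, whereas here $\mathcal{D}_0$ is built from the full sets $\bar{\X},\bar{\Tset}$ produced by Algorithm~\ref{alg_new}. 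By Lemma~\ref{lemma:dual}, $\bar{\Tset}$ contains the images $\trans{T}y$ of all extreme directions $y$ of $(\cone\mathcal{Y}_{Out})^+$ with $c^\T y = 1$ and consists of maximizers for~\eqref{(D')}; correspondingly $\bar{\X}$ contains the associated minimizers, so $\mathcal{D}_0$ in~\eqref{eq:D0} is contained in the~\cite{LRU14} initial outer approximation, possibly tighter but still a valid outer approximation of the lower image $\mathcal{D}$ of~\eqref{(D')}. Therefore~\cite[Theorem 4.14]{LRU14} (together with Section~4.3 there) applies: upon termination $\bar{\Tset}$ is a finite $\varepsilon$--solution of~\eqref{(D')} and $\bar{\X}$ is a finite weak $\varepsilon$--solution of~\eqref{eq:P'}.

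From here the two claims follow as in Theorems~\ref{thm_alg_2} and~\ref{thm_alg_2_dual}. For the primal claim: $\bar{\X}$ being a finite weak $\varepsilon$--solution of~\eqref{eq:P'} means $\cl(\Gamma(\mathcal{X}) + \cone\mathcal{Y}_{Out}) \subseteq \conv\Gamma(\bar{\X}) + \cone\mathcal{Y}_{Out} - \varepsilon\{c\}$, and since $\mathcal{P} \subseteq \cl(\Gamma(\mathcal{X}) + \cone\mathcal{Y}_{Out})$ this yields $\mathcal{P} \subseteq \conv\Gamma(\bar{\X}) + \cone\mathcal{Y}_{Out} - \varepsilon\{c\}$; moreover $C \subseteq \cone\mathcal{Y}_{Out}$ upgrades $\cone\mathcal{Y}_{Out}$--weak minimality of the points $\Gamma(\bar{\X})$ to $C$--weak minimality in $\mathcal{P}$, so $(\bar{\X},\mathcal{Y}_{Out})$ is a finite weak $(\varepsilon,\delta)$--solution of~\eqref{eq:P} in the sense of Definition~\ref{solutionconcept}. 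For the dual claim: since $\mathcal{Y}_{Out}$ is a $\delta$--outer approximation of $\mathcal{P}_\infty$ and $\bar{\Tset}$ is a finite $\varepsilon$--solution of~\eqref{(D')}, the pair $(\bar{\Tset},\mathcal{Y}_{Out})$ is by Definition~\ref{solutionconcept_dual} a finite $(\varepsilon,\delta)$--solution of~\eqref{(D)}. The main obstacle I anticipate is the careful bookkeeping around the initialization: one must verify that the $\mathcal{D}_0$ of~\eqref{eq:D0} is a legitimate (and not too large) outer approximation of the lower image of~\eqref{(D')} and that the extra maximizers from Lemma~\ref{lemma:dual} do not disrupt the correctness argument of~\cite[Algorithm 2]{LRU14}; this is the dual-side analogue of the initialization comparison already made in the proof of Theorem~\ref{thm_alg_2}, and it relies crucially on the identity $(\cone\mathcal{Y}_{Out})^+ = \cone\bar{\mathcal{W}}$ established inside the proof of Lemma~\ref{lemma:dual}.
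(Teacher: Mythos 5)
Your proposal is correct and follows essentially the same route as the paper's proof: validate that $\mathcal{D}_0$ from~\eqref{eq:D0} is an outer approximation of the lower image of~\eqref{(D')} consisting of maximizers via Lemma~\ref{lemma:dual}, identify Steps~3--4 with the iterations of~\cite[Algorithm 2]{LRU14} applied to the bounded problem~\eqref{eq:P'}, invoke~\cite[Theorem 4.14]{LRU14}, and then conclude exactly as in Theorems~\ref{thm_alg_2} and~\ref{thm_alg_2_dual}. The only point the paper makes more explicit is the justification that the additional non-vertical halfspaces $\set{t\in \real^q \mid \trans{w(t)}\Gamma(x^*)-t_q \geq 0}$ contributed by each $x^*\in\bar{\X}$ still contain the lower image (via~\cite[Proposition 4.13]{LRU14}), which is precisely the bookkeeping obstacle you flagged.
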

		\begin{proof}
			We will first show that $\mathcal{D}_0$ found in \rev{line 1 of Algorithm~\ref{alg_dual}} is an outer approximation of the lower image of~\eqref{(D')}. Note that $\set{t\in\real^q \mid w(t)\in (\cone\mathcal{Y}_{Out})^+ }$ is an intersection of vertical halfspaces \rev{(vertical in the sense that the last component of its normal direction is zero)} that contain the lower image of problem~\eqref{(D')}. Moreover, as argued in the proof of Lemma~\ref{lemma:dual}, each $x^*\in\bar{\mathcal{X}}$ corresponds to a $t\in\bar{\mathcal{T}}$ returned by Algorithm~\ref{alg_new}, which is a maximizer for problem \eqref{(D')}. Then, by~\cite[Proposition 4.13]{LRU14}, for each $x^\ast\in \bar{\X}$ the halfspace $\set{t\in \real^q \mid \trans{w(t)}\Gamma(x^*)-t_q \geq 0}$ is non-vertical and contains the lower image of~\eqref{(D')}. Moreover, by Lemma~\ref{lemma:dual}, $\bar{\mathcal{T}}$ found in \rev{line}~1 of Algorithm~\ref{alg_dual} consists of maximizers for \eqref{(D')}.
			If Algorithm~\ref{alg_dual} is continued after \rev{line} 2, then the remaining steps of \rev{Algorithm~\ref{alg_dual}} are the same as in~\cite[Algorithm 2]{LRU14}. In particular, \rev{lines 8-12} can be performed successfully (up to a near-optimal solution, see Remark~\ref{remark_near_opt}).
			% \out{Since the remaining steps of the algorithm are the same as~\cite[Algorithm 2]{LRU14}} 
			Then, by~\cite[Theorem 4.14]{LRU14}, Algorithm \ref{alg_dual} returns a finite $\varepsilon$-solution $\bar{\mathcal{T}}$ to \eqref{(D')}; hence an $(\varepsilon,\delta)$-solution to \eqref{(D)}. Similarly, it returns a finite weak $\varepsilon$--solution $\bar{\X}$ of~\eqref{eq:P'}. Following the same steps from the proof of Theorem~\ref{thm_alg_2}, we conclude that $(\bar{\mathcal{X}}, \mathcal{Y}_{Out})$ is a finite weak $(\varepsilon, \delta)$--solution of~\eqref{eq:P}.
		\end{proof}

		\section{Examples}
		\label{sec_ex}
		In this section, we will consider three numerical examples. The first one is an illustrative example, where we will go through the algorithm step by step and depict and explain the intermediate steps in detail. The second example is coming from a financial application considered in\cite{RudUlu2021}. The involved problem is unbounded and was in~\cite{RudUlu2021} only solved by computing a candidate for the recession cone of the upper image. With help of the algorithms of the present paper the recession cone can be computed without additional arguments regarding the problem structure.
		The third example shows that the proposed algorithms can also be applied outside of the problem of solving CVOPs. It is used here to find polyhedral inner and outer $\delta$--approximations (in the sense of Definitions~\ref{solutionconcept} and ~\ref{solutionconcept2}) of a convex non-polyhedral cone. In the example we choose the ice cream cone.
		\begin{example}
			We illustrate the proposed Algorithms ~\ref{alg_new} and \ref{alg_2}
			%algorithms \br{(Algorithm~\ref{alg_new}, respectively Algorithm~\ref{alg_2})} 
			on the following problem
			\begin{align}
				\label{prob_ex}
				\begin{split}
					&\text{minimize }  \trans{(x_1,x_2)} \quad \text{with respect to\ } \leq_{C} \\   
					&\text{ subject to } (x_1-1)^2 \leq x_2,
				\end{split}
			\end{align}
			where $C=\cone \{\trans{(1,0)},\trans{(1,2)}\}$. \rev{We fix $c = (\frac23,\frac13)^\T \in \Int C$.} The (exact) image of the feasible set $\Gamma(\mathcal{X})$ as well as the (exact) upper image $\mathcal{P}$ for this problem can be seen in Figure~\ref{exactupperimage}. The problem is neither bounded nor self-bounded and the (exact) recession cone of the upper image is $\mathcal{P}_{\infty} = \mathbb{R}^2_+$, which is a strict superset of the ordering cone $C$. This problem is simple enough to deduce these sets exactly. The algorithms proposed in this paper are used to compute inner and outer approximations of the in general unknown sets $\mathcal{P}_{\infty}$ and $\mathcal{P}$, as well as a weak $(\varepsilon,\delta)$--solution of problem~\eqref{prob_ex}.
			
			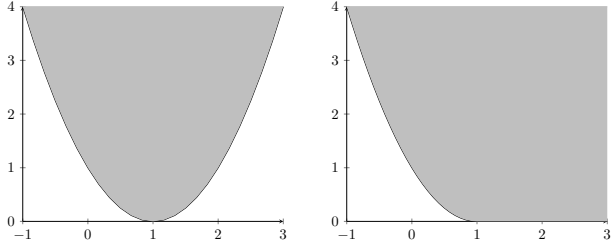
\begin{figure}[h]%[scale=0.5, transform shape]
				\centering
				\begin{tikzpicture}[scale=0.5]
					\begin{axis}[
						axis lines = left,
						]
						%Below the red parabola is defined
						
						\addplot[name path=A,domain=-1:3,color=black]{(x-1)^2};
						\addplot[name path=C,domain=-1:3,color=lightgray]{4};
						\tikzfillbetween[of=A and C]{lightgray};
						
						%\addplot[color=black, line width=1]coordinates{(0.9,0.9*0.9)(1.9,2+0.9*0.9)};
					\end{axis}
				\end{tikzpicture}
				\hspace{0.2cm}
				\begin{tikzpicture}[scale=0.5]
					\begin{axis}[
						axis lines = left,
						]
						%Below the red parabola is defined
						
						\addplot[name path=A,domain=-1:1,color=black]{(x-1)^2};
						\addplot[name path=B,domain=1:3,color=black]{0};
						\addplot[name path=C,domain=-1:3,color=lightgray]{4};
						\tikzfillbetween[of=A and C]{lightgray};
						\tikzfillbetween[of=B and C]{lightgray};
						
						%\addplot[color=black, line width=1]coordinates{(0.9,0.9*0.9)(1.9,2+0.9*0.9)};
					\end{axis}
				\end{tikzpicture}
				\caption{Image of the feasible set $\Gamma(\mathcal{X})$ (left) and upper image $\mathcal{P}$ (right) of Problem~\eqref{prob_ex}. \label{exactupperimage}}
			\end{figure}
			
			In the step by step illustration, we will focus on Algorithm~\ref{alg_new}, which corresponds to the first step of Algorithm~\ref{alg_2}, as the remaining steps of Algorithm~\ref{alg_2} are explained in detail in~\cite{LRU14} already.
			
			We apply Algorithm~\ref{alg_new} to Problem~\eqref{prob_ex} with a tolerance of  $\delta=0.1$.
			%\out{(which is just chosen not smaller for illustrative purposes)}.  
			During the initialization, the algorithm verifies that Problem~\eqref{prob_ex} is feasible and unbounded and the initial point $v=(2.33,5.75)^\T$ in the interior of the upper image is computed. Furthermore three weak minimizers are found.
			The initial outer approximation $\mathcal{P}_0$ of the upper image, depicted in Figure~\ref{approxafterstep5}, is obtained through the supporting hyperplanes at these weak minimizers.
			%To each weak minimizer corresponds a supporting hyperplane, which is used to obtain an outer approximation of the upper image. The initial outer approximation $\mathcal{P}_0$ at the end of the initialization phase is depicted in Figure~\ref{approxafterstep5}. 
			%Since the aim of the algorithm is to approximate the recession cone of the upper image sufficiently well, the recession cone $(\mathcal{P}_0)_\infty$ of the outer approximation is also depicted.
			
			\begin{figure}[h]
				\centering
				\subfloat[][]{\includegraphics[width=0.4\linewidth]{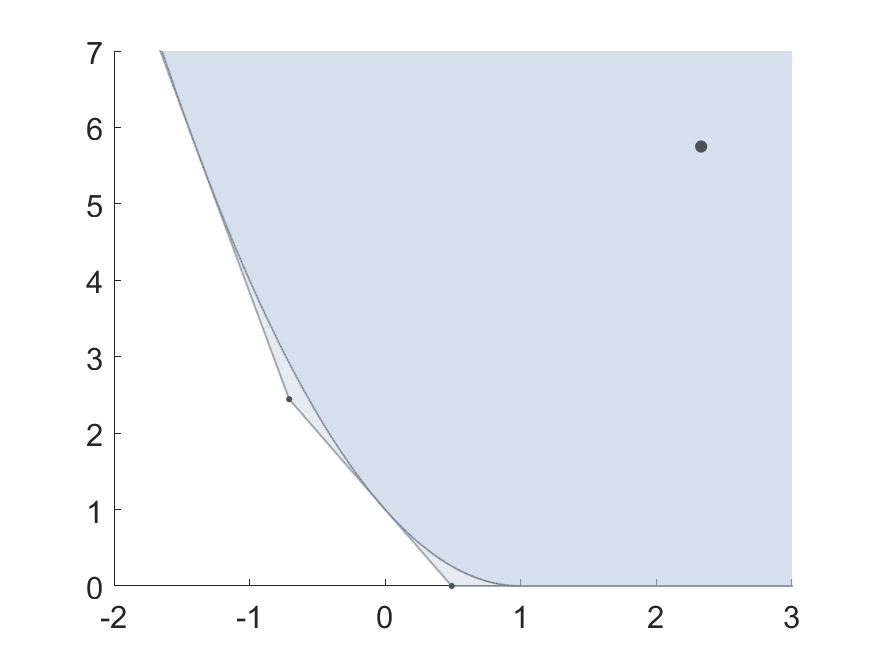}}%
				\qquad
				\subfloat[][]{\includegraphics[width=0.4\linewidth]{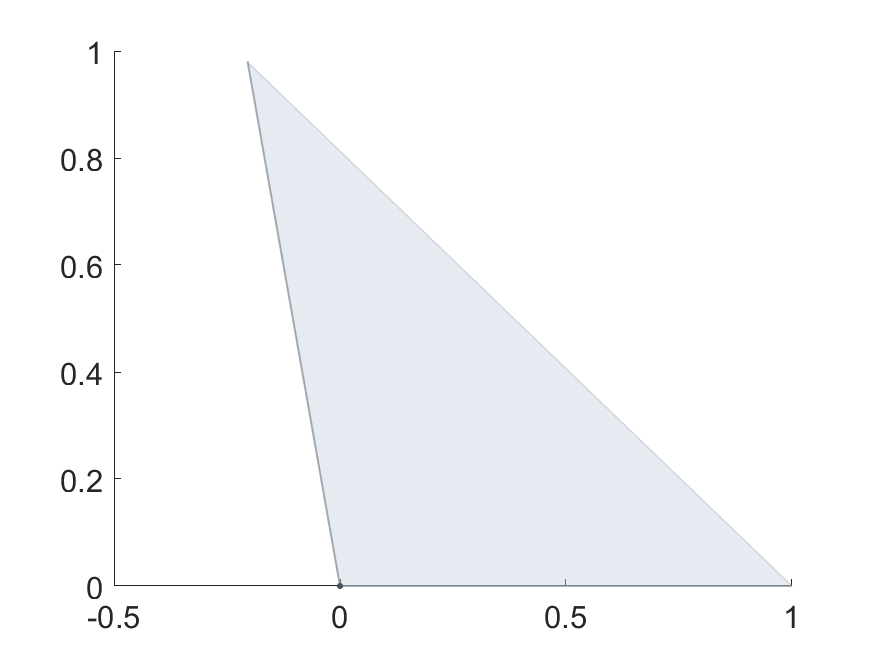}}%
				\caption{\rev{Upper image $\mathcal{P}$, its initial outer approximation $\mathcal{P}_0$ and} initial interior point $v$ (left) and the recession cone $(\mathcal{P}_0)_\infty$ (right). \label{approxafterstep5}}%
			\end{figure}
			
			Throughout the iterations of the algorithm, this initial outer approximation $\mathcal{P}_0$ \rev{of $\mathcal{P}$} is improved until its recession cone $(\mathcal{P}_0)_\infty$ is in $\delta$--distance to the recession cone $\mathcal{P}_\infty$ of the upper image  $\mathcal{P}$ in the sense of~\eqref{eq_def_1}.
			Thus, within the iterations of Algorithm~\ref{alg_new} we work with the intersection of the recession cone $(\mathcal{P}_0)_\infty$ of the  current outer approximation with the unit ball. The non-zero vertices of $(\mathcal{P}_0)_\infty \cap B_1(0)$ provide the set $\mathcal{Y}_{Out}$. These points are in each iteration compared with the set  of known inner directions $\mathcal{Y}_{In}$ (initially containing the normalized generating directions of the ordering cone $\mathcal{Y}_{In}=\{(1,0)^\T,(\frac{1}{3},\frac{2}{3})^\T\}$) and the set $\hat{C}$ (initially empty) of known outer direction with at most $\delta$ distance to the true recession cone $\mathcal{P}_\infty$. All of these sets at the start of the iterations are depicted in Figure~\ref{setsbeforeloop}. How these sets change over the iterations can be seen in Figures~\ref{iterations123} and~\ref{iterations456}.
			Table~\ref{table_iter} summarizes the iterations further.
			
			\begin{figure}[h]
				\centering
				{\includegraphics[width=0.35\linewidth]{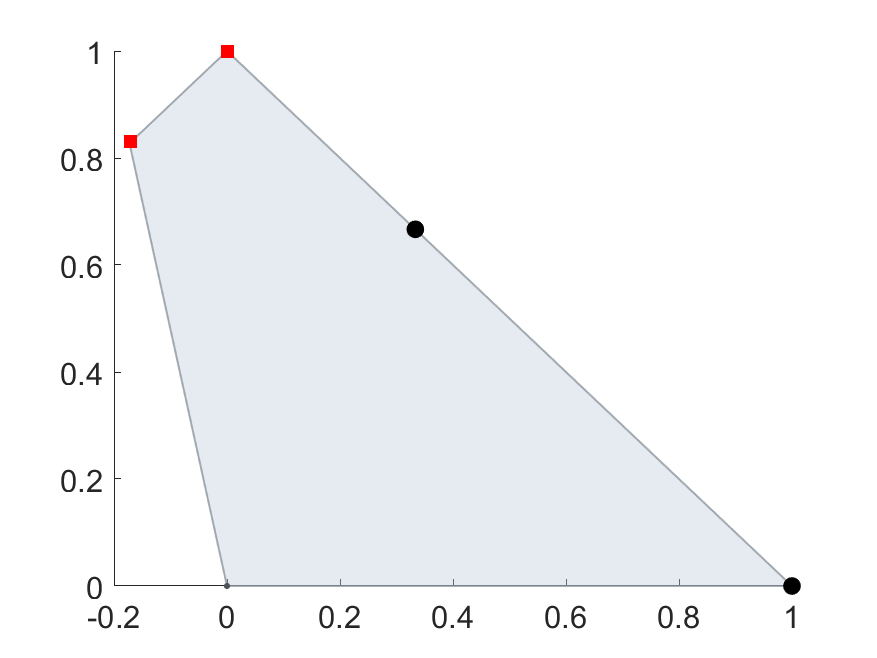}}%
				\caption{Set $(\mathcal{P}_0)_\infty \cap B_1(0)$ after \rev{the initialization}. % {Step~6}. 
					Highlighted are elements of  $\mathcal{Y}_{In}$ (\textcolor{black}{$\bullet$}) and  $\mathcal{Y}_{Out} \setminus (\mathcal{Y}_{In} \cup \hat{C})$ ({$\csquare{red}$}), where $\hat{C}$ is still the empty set. 
					\label{setsbeforeloop}}
			\end{figure}

			\begin{figure}[h]
				\centering
				\subfloat[][]{\includegraphics[width=0.28\linewidth]{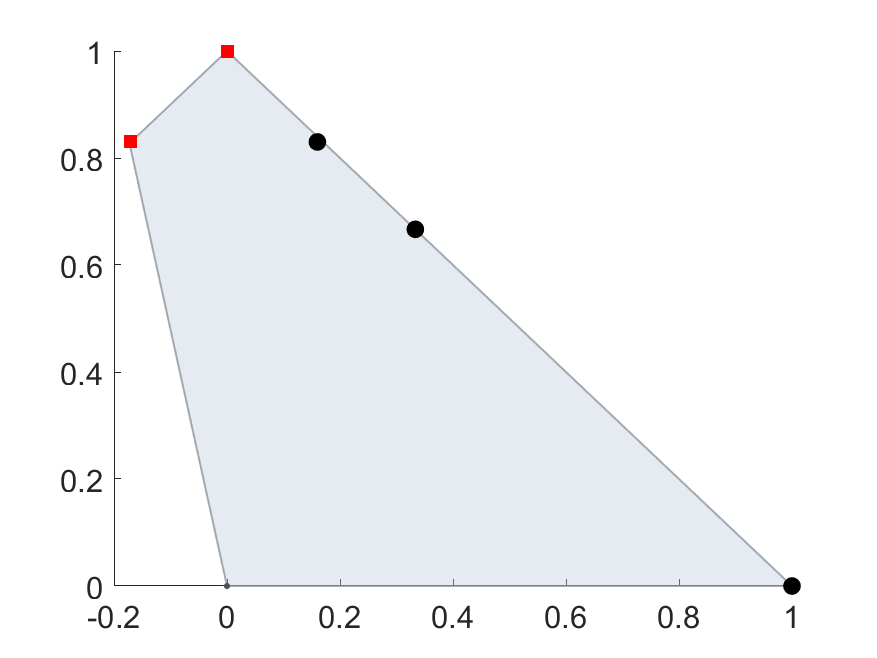}}%
				\qquad
				\subfloat[][]{\includegraphics[width=0.28\linewidth]{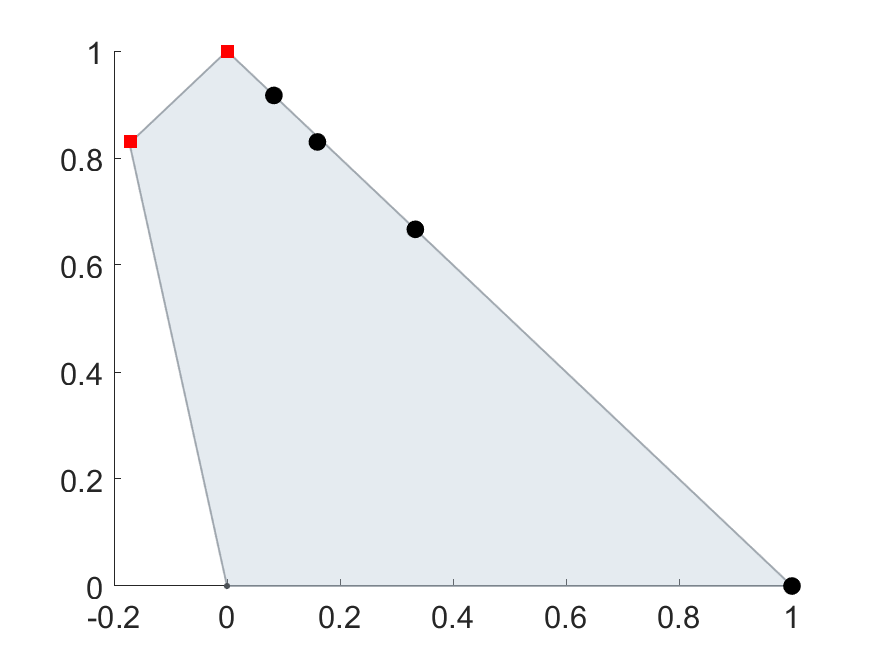}}%
				\qquad
				\subfloat[][]{\includegraphics[width=0.28\linewidth]{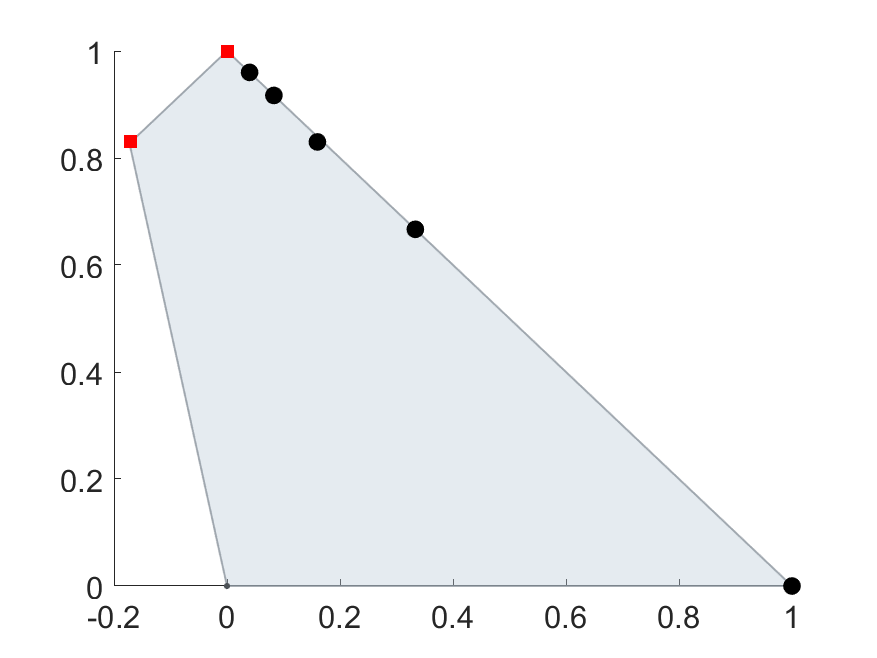}}%
				\caption{Set $(\mathcal{P}_0)_\infty \cap B_1(0)$ with $\mathcal{Y}_{In}$ (\textcolor{black}{$\bullet$}) and  $\mathcal{Y}_{Out} \setminus (\mathcal{Y}_{In} \cup \hat{C})$ ({$\csquare{red}$})  in Iterations~1-3 of \rev{Algorithm~\ref{alg_new}}.
					$\hat{C}$ is still the empty set. 
					\label{iterations123}}
			\end{figure}
			
			\begin{figure}[h]
				\centering
				\subfloat[][]{\includegraphics[width=0.28\linewidth]{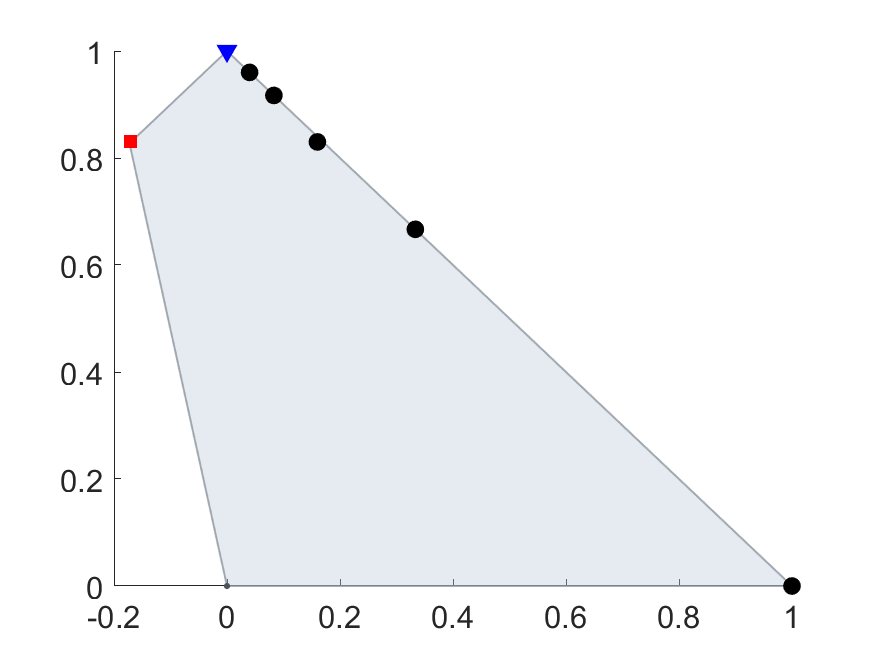}}%
				\qquad
				\subfloat[][]{\includegraphics[width=0.28\linewidth]{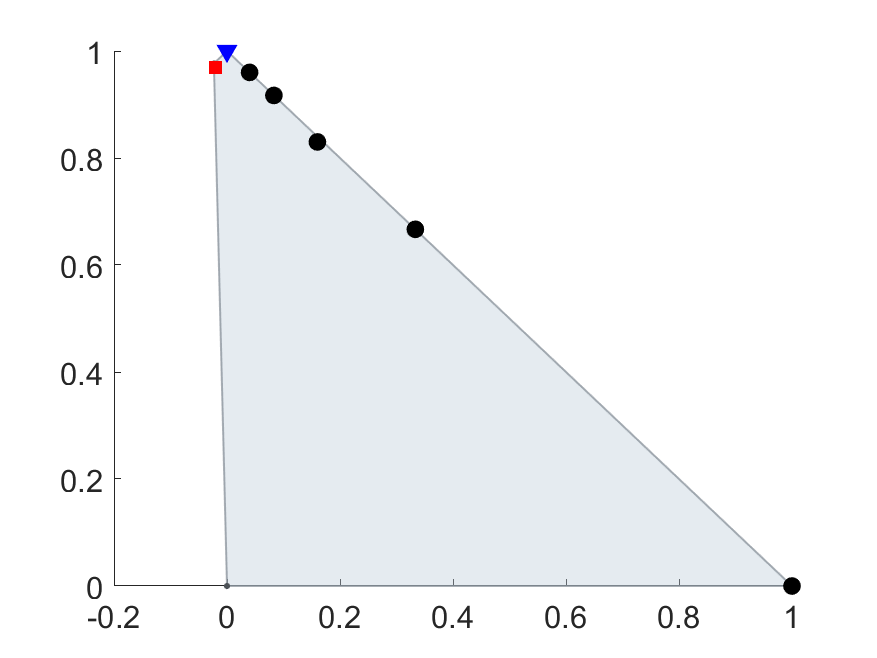}}%
				\qquad
				\subfloat[][]{\includegraphics[width=0.28\linewidth]{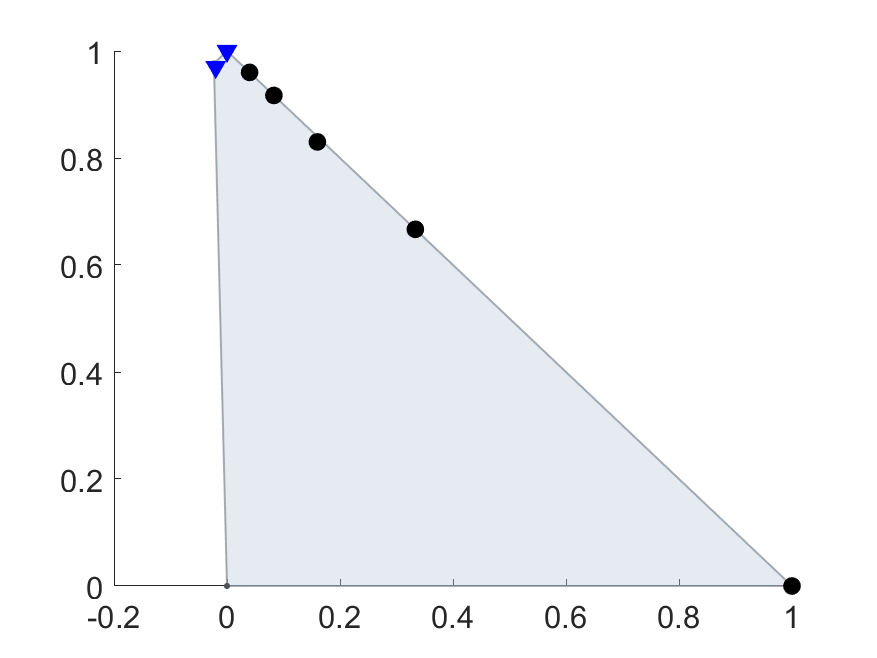}}%
				\caption{Set $(\mathcal{P}_0)_\infty \cap B_1(0)$ with $\mathcal{Y}_{In}$ (\textcolor{black}{$\bullet$}), $\hat{C}$ ({$\ctriangle{blue}$}) and  $\mathcal{Y}_{Out} \setminus (\mathcal{Y}_{In} \cup \hat{C})$ ({$\csquare{red}$}) in Iterations~4-6 of \rev{Algorithm~\ref{alg_new}}. \label{iterations456}}
			\end{figure}

	\begin{table}[h]
		\centering
		\resizebox{0.95\textwidth}{!}{
			\begin{tabular}{|c|c|c|c|c|}
				\hline
				Iteration & $d$ & $\mathcal{Y}_{In}$ & $\hat{C}$ & $\mathcal{Y}_{Out} \setminus (\mathcal{Y}_{In} \cup \hat{C})$\\
				\hline
				1 & $( 0, 1 )^\T$ & $\mathcal{Y}_{In} \cup \{(0.16, 0.83)^\T\}$ & $\emptyset$ & $\left\{(0 , 1)^\T,( -0.17, 0.83 )^\T \right\}$ \\
				\hline
				2 & $( 0, 1 )^\T$ & $\mathcal{Y}_{In} \cup \{(0.083, 0.917)^\T\}$ & $\emptyset$ & $\left\{(0 , 1)^\T,( -0.17, 0.83 )^\T \right\}$ \\
				\hline
				3 & $( 0, 1 )^\T$ & $\mathcal{Y}_{In} \cup \{(0.04, 0.96)^\T\}$ & $\emptyset$ & $\left\{(0 , 1)^\T( -0.17, 0.83 )^\T \right\}$ \\
				\hline
				4 & $( 0, 1 )^\T$ & $\mathcal{Y}_{In}$ & $\{(0, 1)^\T\}$ & $\left\{( -0.17, 0.83 )^\T \right\}$ \\
				\hline
				5 & $( -0.17, 0.83 )^\T$ & $\mathcal{Y}_{In}$ & $\hat{C}$ & $\left\{( -0.02, 0.97 )^\T \right\}$ \\
				\hline
				6 & $( -0.02, 0.97 )^\T$ & $\mathcal{Y}_{In}$ & $\hat{C}\cup \{( -0.02, 0.97 )^\T\}$ & $\emptyset$ \\
				\hline
		\end{tabular}}
		\caption{\label{table_iter} The sets $\mathcal{Y}_{In}, \hat{C}$ and $\mathcal{Y}_{Out} \setminus (\mathcal{Y}_{In} \cup \hat{C})$ after each iteration \rev{of Algorithm~\ref{alg_new}} as well as the considered direction $d$ in each iteration. If a set has not changed after an iteration we write the set itself.}
	\end{table}

\end{example}

	Let us now consider Table~\ref{table_iter} in detail:
	In the first three iterations the selected $d \in \mathcal{Y}_{Out}$ has a distance greater than $\delta$ to the nearest point $\tilde{r}$ of the inner direction set, so one solves the Pascoletti-Serafini scalarization \rev{(P$_2 (v, \frac{ d + \tilde{r}}{\norm{ d + \tilde{r}}})$)}. 
	%(P$_2 (v, \lambda d + (1-\lambda) \tilde{r})$) for a convex combination of $d$ and $\tilde{r}$. 
	Since the scalarization problem is unbounded, 
	%case~((b) i.) applies and 
	the set of inner directions is updated \rev{(line 24)}. In Iteration~4, the selected $d$ has a distance less or equal than $\delta$ to $\mathcal{Y}_{In}$, so %case~(a) applies and updates 
	$\hat{C}$ \rev{is updated (line 20)}. In Iteration~5 the selected $d$ has a distance greater than $\delta$ to the nearest point $\tilde{r}$ of the inner set, so again the Pascoletti-Serafini scalarization for a convex combination of $d$ and $\tilde{r}$ is solved. The scalarization yields an optimal solution. 
	%, so case~((b) ii.) applies. 
	Hence one computes an updated polytope which yields a new set of directions $\mathcal{Y}_{Out}$ \rev{(lines 26-27)}. Finally, in Iteration~6, %case~(a) applies as well, 
	\rev{the selected $d$ has a distance less or equal than $\delta$}, which ends the loop and terminates the algorithm since the set of outer directions with distance greater than $\delta$ to the inner set is now empty.

	Algorithm~\ref{alg_new}  outputs in particular the final
	initial outer approximation $\mathcal{P}_0$ \rev{of $\mathcal{P}$}, the set\\ $\mathcal{Y}_{In}=\{(1,0)^\T,(\frac{1}{3},\frac{2}{3})^\T,(0.16,0.83)^\T,(0.083,0.917)^\T,(0.04,0.96)^\T\}$, as well as the set\\
	$\mathcal{Y}_{Out} = \{(1,0)^\T,(0,1)^\T, (-0.02,0.97)^\T\}$. Note that some directions of $\mathcal{Y}_{In}, \mathcal{Y}_{Out}$ can be omitted. By 
	$$\cone \mathcal{Y}_{In}=\cone\{(1,0)^\T,(0.04,0.96)^\T\}, \cone \mathcal{Y}_{Out}=\cone \{(-0.02,0.97)^\T,(1,0)^\T\},$$ both depicted in Figure~\ref{inneroutercone}, a finite $\delta$--inner-- and a finite $\delta$--outer approximation of $\mathcal{P}_\infty$ is provided. %Both, the inner- and outer approximation cones can be seen in Figure~\ref{inneroutercone}.
	
	\begin{figure}[h]
		\centering
		\includegraphics[width=0.38\linewidth]{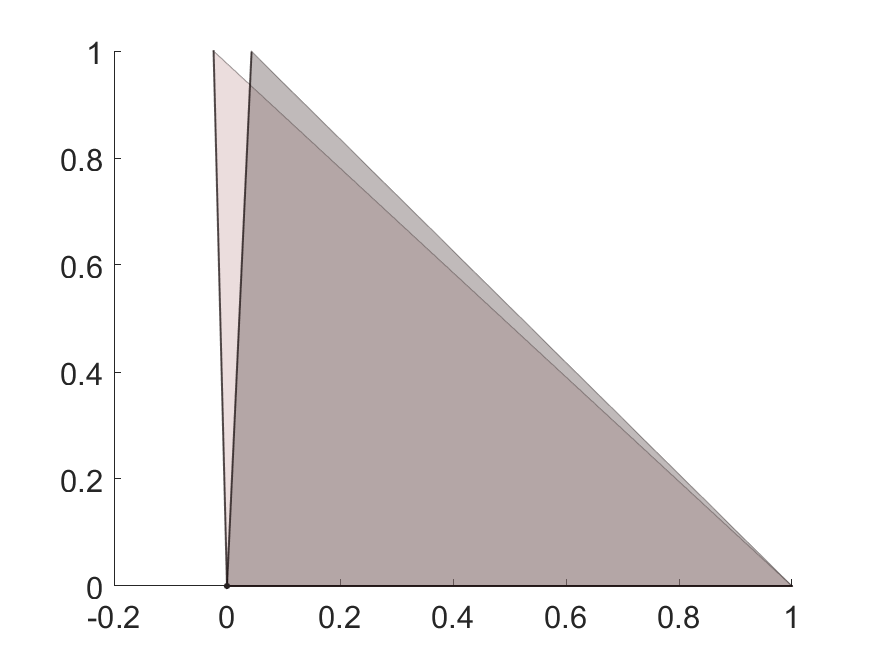}
		\caption{Sets $\cone \mathcal{Y}_{In}$ and $\cone \mathcal{Y}_{Out}$.} 
		\label{inneroutercone}
	\end{figure}
	
	Algorithm~\ref{alg_2} continues using $\cone \mathcal{Y}_{Out}$ as the new ordering cone, that is, it solves the now bounded problem~\eqref{eq:P'} using the algorithm from \cite{LRU14}. One thus obtains a weak $\varepsilon$--solution $\bar{\X}$ of~\eqref{eq:P'}. Then, the pair $(\bar{\X}, \mathcal{Y}_{Out})$ provides a weak $(\varepsilon,\delta)$--solution of problem~\eqref{prob_ex}. The obtained outer- and inner approximation of the upper image is depicted in Figure~\ref{upperimage}.

	\begin{figure}[h]
		\centering
		\subfloat[][]{\includegraphics[width=0.4\linewidth]{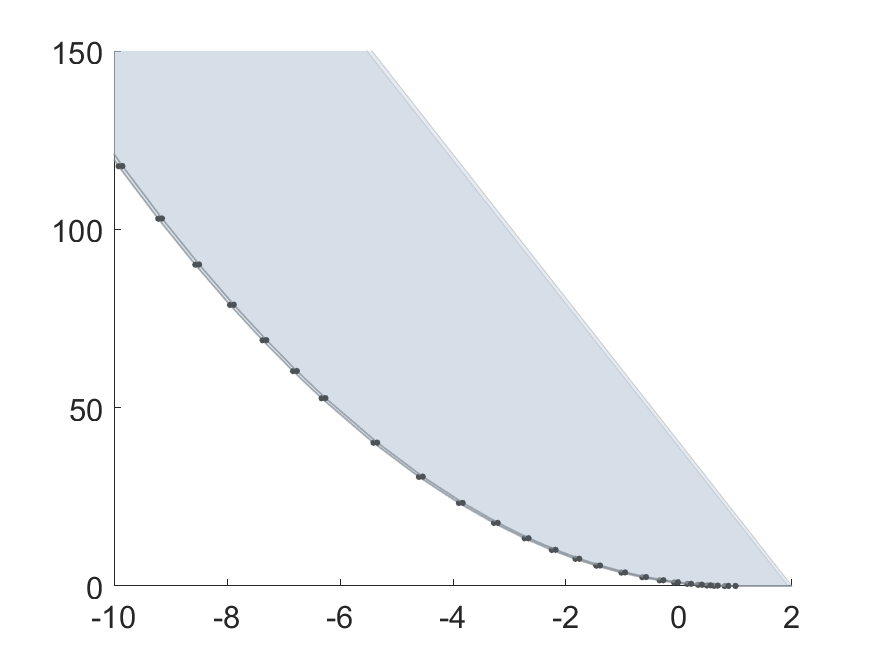}}%
		\qquad
		\subfloat[][]{\includegraphics[width=0.4\linewidth]{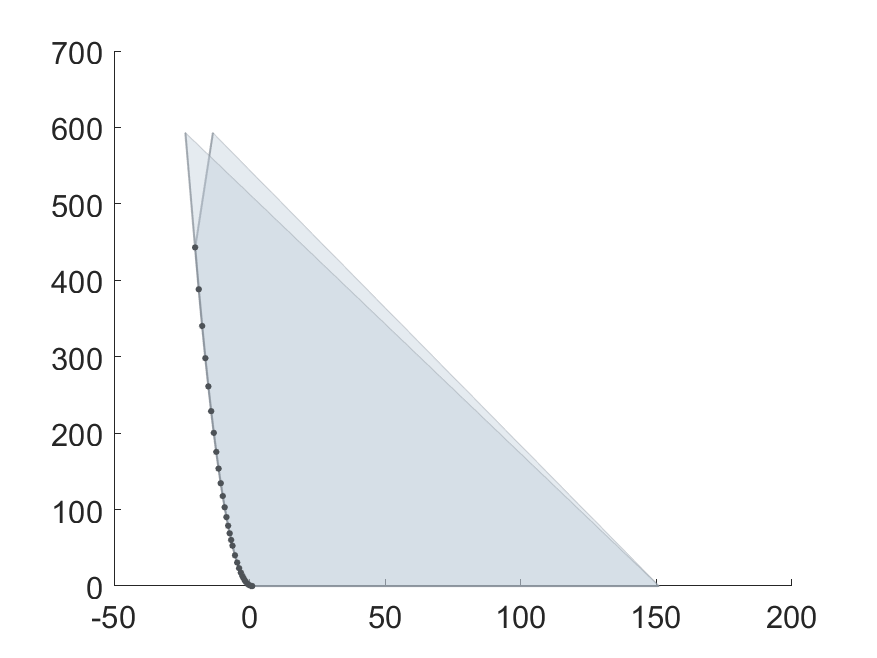}}%
		\caption{Inner- and outer approximation of the upper image, zoomed in (left) and zoomed out (right).}
		\label{upperimage}
	\end{figure}

	\begin{example} \label{ex:util}
		In \cite{RudUlu2021}, convex vector optimization problems are solved to compute set-valued indifference buy and sell prices where the underlying preference relation and/or the financial market are not complete. In particular, in \cite[Example 6.7]{RudUlu2021}, a conical market model is considered where the initial solvency cone $\real^2_+ \subsetneq K_0 \subsetneq \real^2$ is given by $K_0 = \cone\{\trans{(1,-0.9)},\trans{(-0.9,1)}\}$. The ordering cone for the two CVOPs to be solved (given by \cite[equations (21),(22)]{RudUlu2021}) in order to compute the set valued buy and sell prices is $\real^2_+$. However, it is shown in \cite[Section 6.2]{RudUlu2021} that these CVOPs are unbounded with respect to $\real^2_+$. To overcome this difficulty, it is shown analytically that the recession cones of the upper images are at least as large as the cone $K_0$. Using this observation, in \cite[Example 6.7]{RudUlu2021}, the ordering cone of the problems are set to $K_0$, \cite[Algorithm 1]{LRU14} is called for each problem and since the algorithm returns solutions to these CVOPs, it is concluded that the problems are bounded with respect to $K_0$. 
		
		We consider these two CVOPs with their original ordering cone, $\real^2_+$, and run Algorithm \ref{alg_new} with $\delta = 0.01$. For both problems we obtain $\mathcal{Y}_{Out} = \{\trans{(1,-0.9)},\trans{(-0.9,1)}\}$ and $\mathcal{Y}_{In} = \{\trans{(1,-0.8867)},\trans{(-0.8867,1)}\}$. As $\cone \mathcal{Y}_{Out}=K_0$, we can by \cite{RudUlu2021} conclude that the outer approximation of the recession cone returned by Algorithm \ref{alg_new} coincides with the true recession cone. 
	\end{example}

	\begin{example}
		\label{ex_ice}
		The ice cream cone
		%	\begin{align*}
			$C_{ice} = \left\lbrace x \in \mathbb{R}^3 \mid \sqrt{x_1^2 + x_2^2} \leq x_3 \right\rbrace $
			%\end{align*}
			is a convex cone having all the properties we require of an ordering cone within this paper aside from not being polyhedral. We used our algorithm to find a polyhedral (inner and outer) approximation of the ice cream cone. To do this we solve the problem
			\begin{align}
				\label{prob_ice}
				\text{minimize~} x \quad \text{with respect to~} \leq_{C_i} \quad \text{subject to~} x \in C_{ice},
			\end{align}
			where $C_i$ is a polyhedral ordering cone contained within the ice cream cone $C_{ice}$. Since the upper image of this problem is a cone, namely the ice cream cone, problem~\eqref{prob_ice} can be solved via Algorithm~\ref{alg_new}, the second phase of Algorithm~\ref{alg_2} is not needed in this case. We solve problem~\eqref{prob_ice} for two choices of ordering cones, $C_1 = \cone \{ (1, 0, 1)^\T, (-1, 0, 1)^\T,(0, 1, 1)^\T, (0, -1, 1)^\T \}$ and $C_2 = \cone \{ (1, 0, 1)^\T, (0, 1, 1)^\T,$ $ (0, 0, 1)^\T\}$, and a tolerance of $\delta = 0.2$. The resulting approximations are visualized in Figures~\ref{fig_ice1} and ~\ref{fig_ice2}. 
			
			\begin{figure}[h]
				\subfloat[][]{\includegraphics[width = 0.45\textwidth]{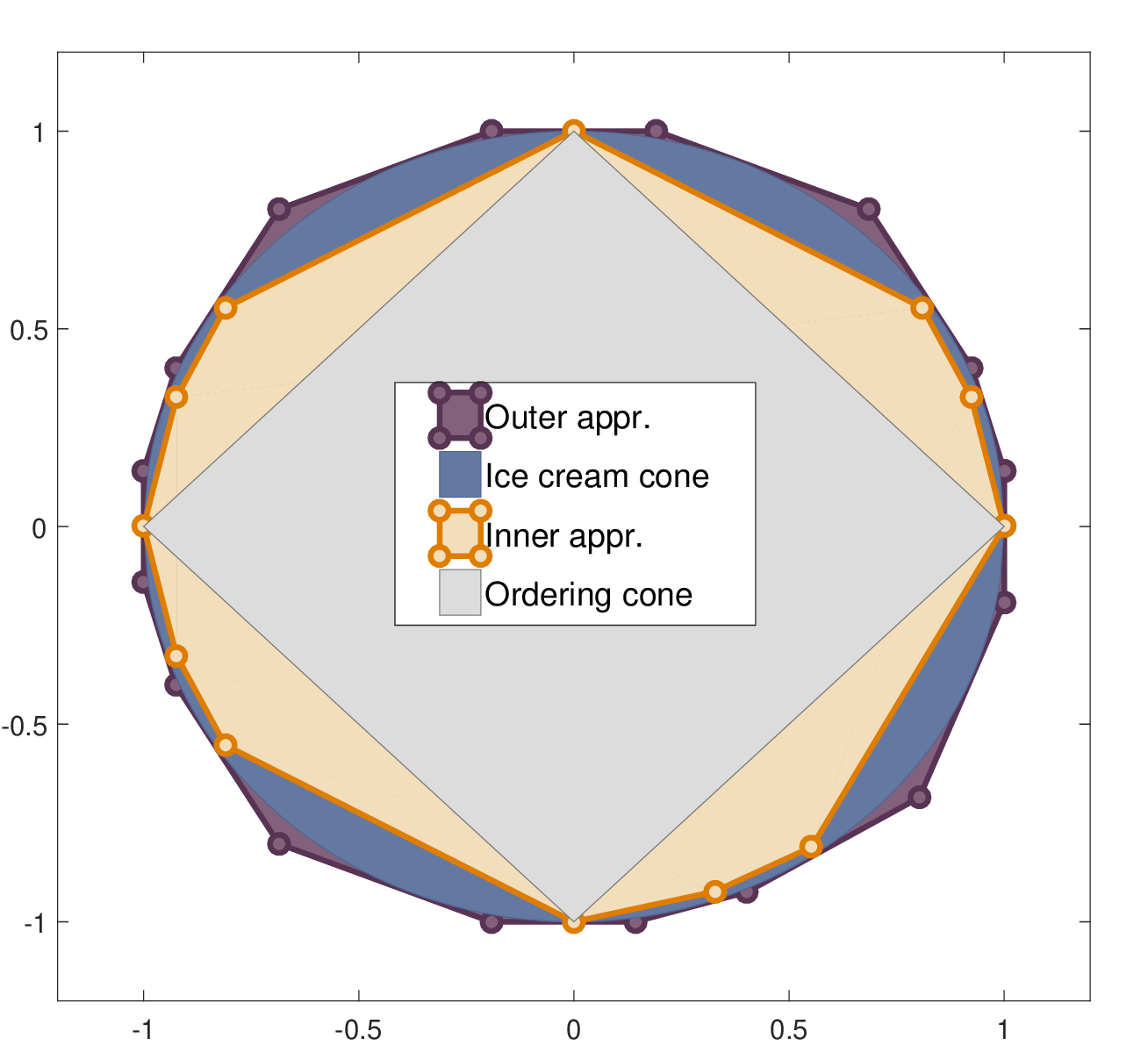}}
				\qquad
				\subfloat[][]{\includegraphics[width = 0.45\textwidth]{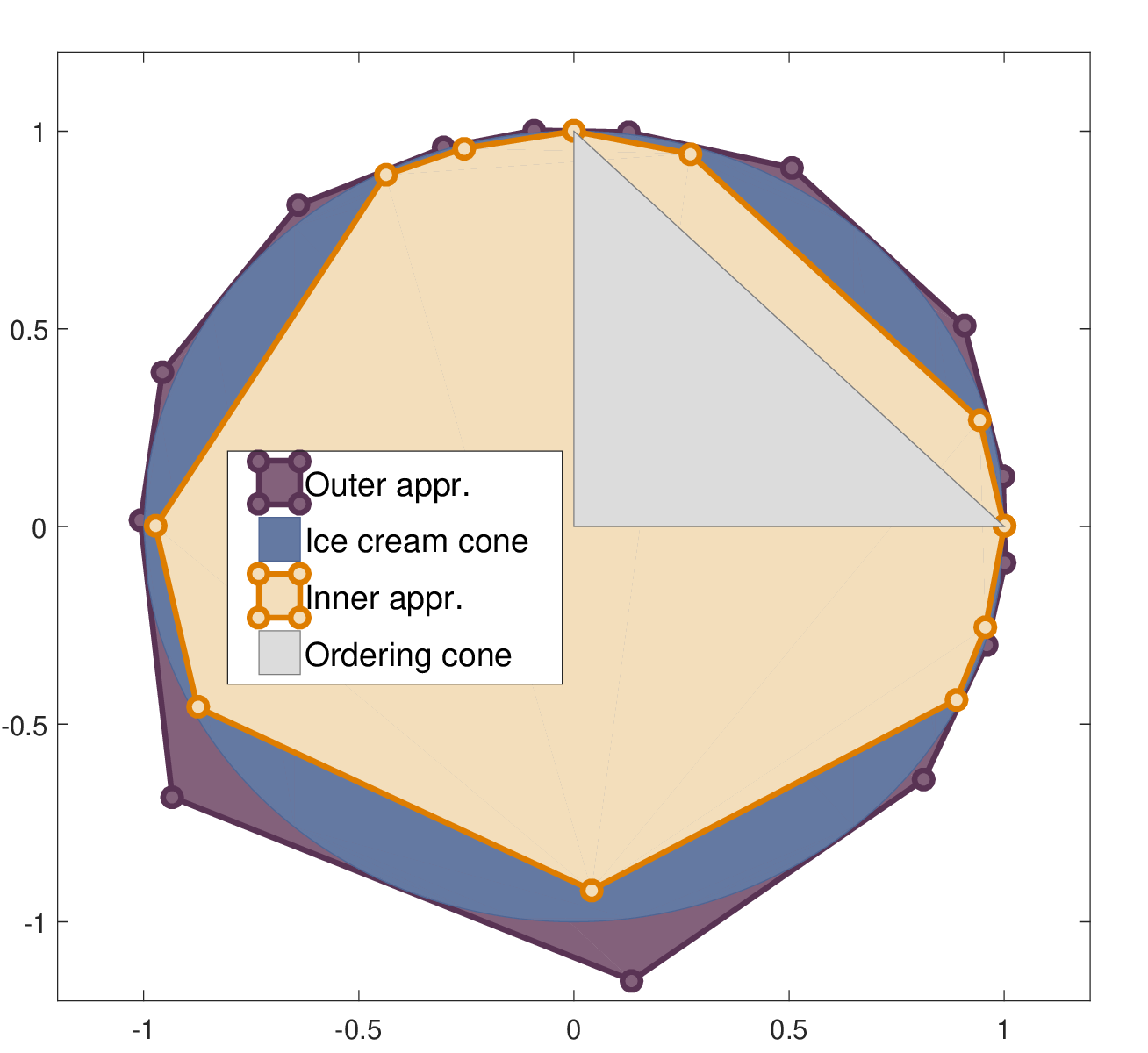}}
				\caption{\label{fig_ice1} On an intersection with a plane $\{ x \in \mathbb{R}^3 \mid x_3 = 1\}$  we depict the ice cream cone $C_{ice}$, the inner and outer approximations \rev{of it} obtained from Algorithm~\ref{alg_new} and the used ordering cone. The left side corresponds to ordering cone $C_1$, the right side to ordering cone $C_2$.}
			\end{figure}
			\begin{figure}[h]
				\subfloat[][]{\includegraphics[width = 0.45\textwidth]{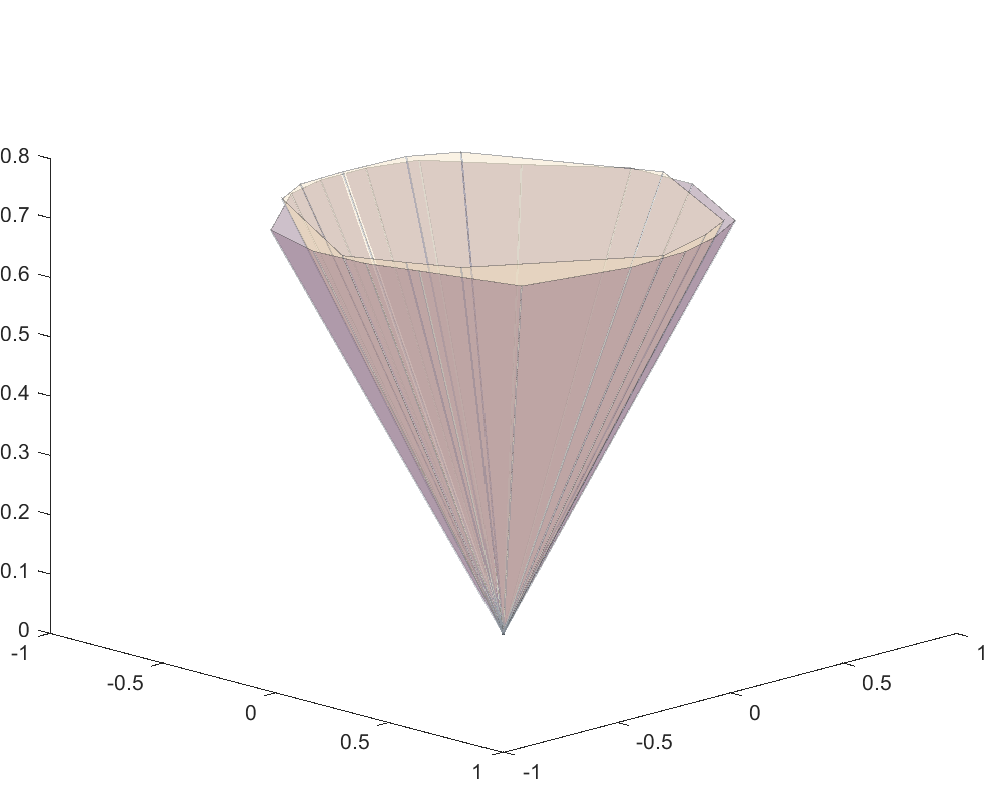}}
				\qquad
				\subfloat[][]{\includegraphics[width = 0.45\textwidth]{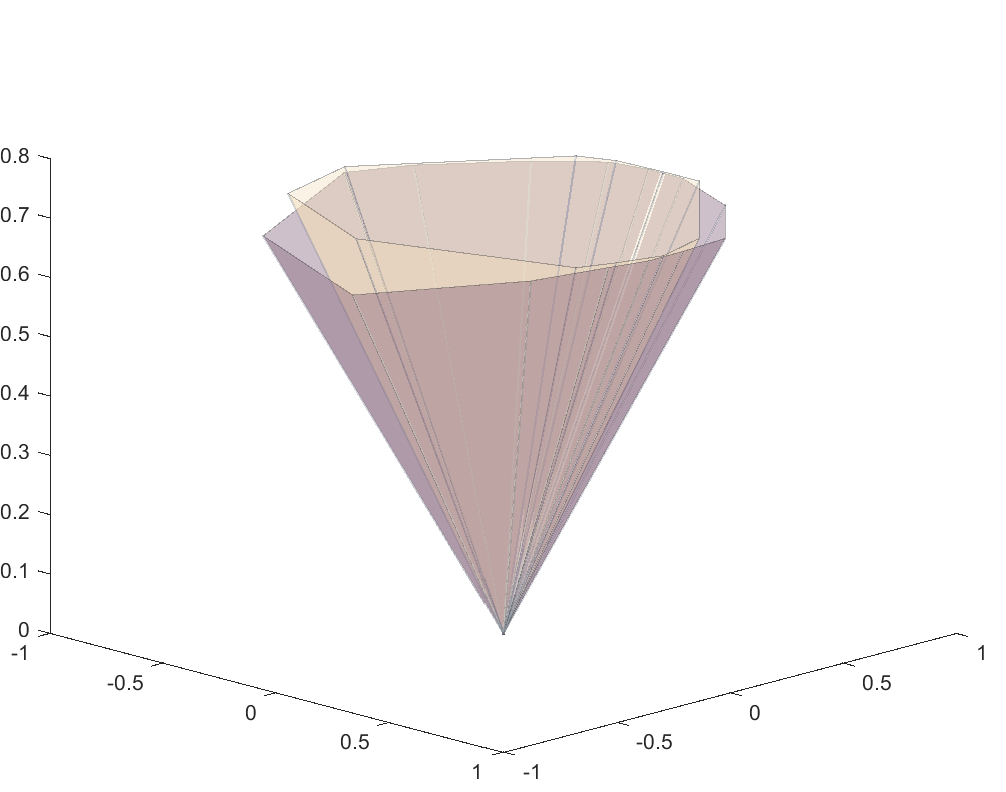}}
				\caption{\label{fig_ice2} Inner and outer approximations of the ice cream cone obtained from Algorithm~\ref{alg_new}. The left side corresponds to ordering cone $C_1$, the right side to ordering cone $C_2$.}
			\end{figure}
			
		\end{example}

		\section{Conclusion} \label{sec:conc}
		\rev{
			When it comes to solving convex vector optimization problems, the prior literature provides a solution concept and methods appropriate for bounded problems, but not for unbounded ones. In this paper we consider the unbounded case: Firstly, we propose a generalized solution concept appropriate for a convex vector optimization problem regardless of whether it is bounded or not. Secondly, we provide an algorithm for computing directions that approximate the recession cone of the upper image of~\eqref{eq:P}. And thirdly, we combine this algorithm with known methods, both primal and dual, to compute a solution, primal as well as dual, of the convex vector optimization problem.
		}
		
		%\gk{Should we mention D\"orfler and L\"ohne's work also here in the conclusion or not?}
		\rev{
			The results closest to those obtained here can be found in the works of~\cite{Doerfler22, DorLoh2022}. The authors of~\cite{Doerfler22, DorLoh2022} do not solve convex vector optimization problems, but rather search for polyhedral approximations to convex sets, in particular so-called spectahedrons and spectahedral shadows. Our $(\epsilon, \delta)$-solution is closely related to the concept of $(\epsilon, \delta)$-polyhedral approximation of a convex set proposed in \cite{Doerfler22}. Independently of our work, \cite{DorLoh2022} introduce an algorithm for approximating recession directions of spectahedral shadows. 
		}
		
		\rev{
			Since our work is the first attempt to handle unbounded convex vector optimization problems in the literature, there are still open questions to be answered and generalizations to be made. An important question concerns the ordering cone. Here, as well as in the literature handling the bounded case, a polyhedral ordering cone is assumed for computational reasons. But what about problems with non-polyhedral ordering cones? One possibility could be to first approximate the non-polyhedral ordering cone through a polyhedral one (using Algorithm~\ref{alg_new} as in Example~\ref{ex_ice}) and then use the approximation to solve (a modified) vector optimization problem. The difficulty of this approach lies with bounding the approximation error for the directions. We leave this question for future research.
		}
		
		\rev{
			Remark~\ref{rem:lineality} suggests further questions that can be explored with regards to Algorithm~\ref{alg_new} for approximating recession directions: The current version works with arbitrary direction selection (similarly to arbitrary vertex selection of~\cite[Algorithm 1]{LRU14}), different selection rules could be examined in the future (similarly to vertex selection rules of~\cite{DorLohSchWei2021,KesUlus2022}). Another question to explore with regard to the direction is under what assumptions (and at what costs) could also directions be obtained in the pre-image space.  
		}
		
		\rev{
			Furthermore, the dual Algorithm~\ref{alg_dual} works under an additional assumption which guarantees that all relevant weights $w$ lead to a bounded weighted sum scalarization problem. One could explore the idea of disturbing the problematic weights $w$ slightly to guarantee boundedness. The important question is how to chose such 'slight disturbance' and how to translate it into the error tolerance.
		}

	\section*{Acknowledgments}
F. Ulus and B. Rudloff acknowledge support from the OeNB anniversary fund, project number 17793.

\bibliographystyle{plain}
\bibliography{database+}

\end{document}